\definecolor{darkred}{rgb}{0.9,0.1,0.1}
\theoremstyle{plain}
\newtheorem{theorem}{Theorem}[section]
\newtheorem{corollary}[theorem]{Corollary}
\newtheorem{lemma}[theorem]{Lemma}
\newtheorem{proposition}[theorem]{Proposition}
\newtheorem{definition}[theorem]{Definition}
\theoremstyle{remark}
\newtheorem{remark}[theorem]{Remark}
\newtheorem{example}[theorem]{Example}
\numberwithin{equation}{section}
\def\N{{\mathbb N}}
\def\R{{\mathbb R}}
\newcommand{\E}{{\mathbb E}}
\renewcommand{\P}{{\mathbb P}}
\newcommand{\F}{{\mathscr F}}
\renewcommand{\d}{\delta}
\newcommand{\eps}{\varepsilon}
\newcommand{\om}{\omega}
\newcommand{\Om}{\Omega}
\DeclareMathOperator{\Dom}{\mathsf{D}}
\newcommand{\ip}[1]{\langle {#1}\rangle}
\newcommand{\dd}{\;\mathrm{d}}
\newcommand{\beq}{\begin{equation}}
\newcommand{\eeq}{\end{equation}}
\newcommand{\bal}{\begin{aligned}}
\newcommand{\eal}{\end{aligned}}
\newcommand{\ben}{\begin{enumerate}}
\newcommand{\beni} {\begin{enumerate}[(i)]}
\newcommand{\een}{\end{enumerate}}
\newcommand{\bit}{\begin{itemize}}
\newcommand{\eit}{\end{itemize}}
\newcommand{\beqw}{\begin{equation*}}
\newcommand{\eeqw}{\end{equation*}}
\newcommand{\bex}{\begin{example}}
\newcommand{\eex}{\end{example}}
\newcommand{\bre}{\begin{example}}
\newcommand{\ere}{\end{example}}
\newcommand{\bma}{\begin{bmatrix}}
\newcommand{\ema}{\end{bmatrix}}
\newcommand{\one}{{{\bf 1}}}
\renewcommand{\hat}{\widehat}
\newcommand{\ot}{\otimes}
\newcommand{\hN}{\tilde{N}}
\newcommand{\Y}{\mathscr{E}}
\newcommand{\YY}{E}
\newcommand{\cL}{\mathscr{L}}
\newcommand{\cC}{\mathscr{C}(\Om)}
\newcommand{\cCX}{\mathscr{C}(\Om;X)}
\newcommand{\cCXs}{\mathscr{C}(\Om;X^*)}
\newcommand{\kk}{\mathbf{k}}
\newcommand{\n}{\Vert}
\newcommand{\jj}{\mathbf{j}}
\newcommand{\ee}{\mathbf{e}}
\newcommand{\rr}{\mathbf{r}}
\newcommand{\BB}{\mathbf{B}}
\newcommand{\JJ}{\mathbf{J}}
\newcommand{\KK}{\mathbf{K}}
\newcommand{\dD}{\mathbb{D}}
\newcommand{\lb}{\langle}
\newcommand{\rb}{\rangle}
\newcommand{\cP}{\mathscr{P}}
\newcommand{\cD}{\mathscr{D}}
\newcommand{\cI}{\mathscr{I}}
\newcommand{\cJ}{\mathscr{J}}
\newcommand{\cF}{\mathscr{F}}
\newcommand{\cG}{\mathscr{G}}
\newcommand{\cB}{\mathscr{B}}
\newcommand{\cS}{\mathscr{S}}
\newcommand{\ti}{\times}
\newcommand{\si}{\sigma}
\newcommand{\bF}{\mathbb{F}}
\newcommand{\xspace}{\hbox{\kern-2.5pt}}
\newcommand{\bP}{\mathbb{P}}
\newcommand{\cE}{\mathscr{E}}
\newcommand{\Ex}{\mathbb{E}}
\newcommand{\oto}{}
\newcommand{\otw}{}
\newcommand{\Si}{\Sigma}
\newcommand{\cA}{\mathscr{A}}
\newcommand{\al}{\alpha}
\begin{document}

\title[Poisson stochastic integration in Banach spaces]{Poisson stochastic
integration in Banach spaces}

\author{Sjoerd Dirksen}
\address{Hausdorff Center for Mathematics\\
University of Bonn\\
Endenicher Allee 60\\
53115 Bonn\\
Germany} \email{sjoerd.dirksen@hcm.uni-bonn.de}

\author{Jan Maas}
\address{
Institute for Applied Mathematics\\
University of Bonn\\
Endenicher Allee 60\\
53115 Bonn\\
Germany}
\email{maas@uni-bonn.de}

\author{Jan van Neerven}
\address{
Delft Institute of Applied Mathematics\\
Delft University of Technology
\\P.O. Box 5031\\
2600 GA Delft\\
The Netherlands}
\email{J.M.A.M.vanNeerven@tudelft.nl}

\thanks{The first and third named authors were supported by VICI subsidy
639.033.604 of the Netherlands Organisation for Scientific Research (NWO). The
first named author was also supported by the Hausdorff Center for Mathematics.
}

\keywords{Stochastic integration, Poisson random measure, martingale type, UMD Banach spaces, stochastic convolutions,
Malliavin calculus, Clark-Ocone representation theorem}

\subjclass[2000]{Primary 60H05; Secondary: 60G55, 60H07}

 \begin{abstract}
We prove new upper and lower bounds for Banach space-valued stochastic integrals with respect to a compensated Poisson random measure. Our estimates apply to Banach spaces with non-trivial martingale (co)type and extend various results in the literature.
We also develop a Malliavin framework to interpret Poisson stochastic integrals as vector-valued Skorohod integrals, and prove a Clark-Ocone representation formula.
 \end{abstract}

\date\today

\maketitle

\section{Introduction}

This paper investigates upper and lower bounds for the $L^p$-norms of stochastic integrals of the form
\begin{equation}
\label{eqn:ItoIntro}
\int_{\R_+\times J} \phi\dd \tilde N,
\end{equation}
where $\tilde N$ is a compensated Poisson random measure with jump space $J$ and $\phi$ is a simple, adapted process.
Such bounds translate into necessary and sufficient conditions, respectively, for It\^{o} $L^p$-stochastic integrability.
It is well known that if $\phi$ takes values in a Hilbert space $H$, then by a straightforward generalization of the
Wiener-It\^{o} isometry one has
\begin{align*}
\E\Big\|\int_{\R_+\ti J} \phi  \ \dd\tilde{N}\Big\|_{H}^2 = \E\|\phi\|_{L^2(\R_+\times J;H)}^2.
\end{align*}
It is however surprisingly more difficult to find bounds for the $p$-th moments of stochastic integrals for $p\neq 2$
and for processes $\phi$ taking values in more general Banach spaces.\par
In a recent paper \cite{Dir12}, the first-named author obtained sharp upper and lower bounds for the $L^p$-norms
or in other words, an It\^o isomorphism, for stochastic integrals of the form (\ref{eqn:ItoIntro}) in the case
that $\phi$ takes values in $L^q(S)$ with $1<q<\infty$. These estimates take six fundamentally different forms
depending on the relative position of the parameters $p$ and $q$ with respect to $2$, see
Theorem~\ref{thm:summarySILqPoisson} for a precise statement. This is in sharp contrast to the
situation for stochastic integrals with respect to Wiener noise where, essentially as a consequence of Kahane's inequalities, the space of stochastically integrable processes can be described in terms of a single family of
square function norms. In fact, an It\^{o} isomorphism for Gaussian stochastic integrals in the much wider class of
UMD Banach spaces was obtained in \cite{vNVW07}. The result in \cite{Dir12} indicates, however, that the situation is
more involved for Poisson stochastic integrals and it remains an open problem to find sharp bounds for such integrals
with a Banach space-valued integrand.\par
The aim of the first part of this article is to obtain one-sided extensions of the estimates in \cite{Dir12} in more
general Banach spaces. We present upper bounds for the $L^p$-norm of the stochastic integral for Banach spaces
with non-trivial martingale type and lower bounds for Banach spaces with finite martingale cotype. The main upper bounds in
Theorem~\ref{thm:typesBanach} state that if $X$ has martingale type $s\in (1,2]$, then
\begin{align*}
\E\sup_{t>0}\Big\|\int_{(0,t]\ti J} \phi  \
\dd\tilde{N}\Big\|_{X}^p \lesssim_{p,s,X} \E\|\phi\|_{L^s(\R_+\times J;X)\cap L^p(\R_+\times J;X)}^p
\end{align*}
if $1<s\leq p<\infty$, while
\begin{align*}
\E\sup_{t>0}\Big\|\int_{(0,t]\ti J} \phi  \
\dd\tilde{N}\Big\|_{X}^p \lesssim_{p,s,X} \E\|\phi\|_{L^s(\R_+\times J;X)+L^p(\R_+\times J;X)}^p.
\end{align*} if $1<p\leq s$.
If $X$ has martingale cotype $s\in [2,\infty)$, then the `dual' inequalities
hold, see Theorem~\ref{thm:cotypesBanach}. Moreover, in case $X$ is a
Hilbert space, the martingale type and cotype inequalities combine into a
two-sided inequality that completely characterizes
the class of $L^p$-stochastically integrable processes. These statements
extend and complement various partial results in the literature
\cite{BrH09,Hau11,MPR10,MaR10,Zhu11}, see also the discussion after
Theorem~\ref{thm:typesBanach}.

As an application, we present in Theorem~\ref{thm:maxStochConv} some estimates for stochastic convolutions using a
standard dilation argument. In the setting of Hilbert spaces, these lead to
sharp maximal inequalities for stochastic convolutions with a semigroup of
contractions.

In general, the estimates in Theorems~\ref{thm:typesBanach} and
\ref{thm:cotypesBanach} do not lead to an It\^{o} isomorphism if $X$ is not a
Hilbert space. Indeed, the aforementioned result in \cite{Dir12} shows that
these estimates are already suboptimal for $L^q$-spaces with $q\neq 2$. For UMD
Banach spaces, however, we can still formulate an `abstract' It\^{o}-type
isomorphism. Using a standard decoupling argument we obtain, for any
$1<p<\infty$, a two-sided estimate of the form
\begin{equation}
\label{eqn:ItoUMDIntro}
\Big(\E \sup_{t>0}\Big\n \int_{(0,t]\times J} \phi \dd \tilde N
\Big\n_X^p\Big)^\frac1p \eqsim_{p,X} \n \phi\n_{L^p(\Om;\nu_p(\R_+\times
J;X))},
\end{equation}
where $\nu_p(E;X)$ is the completion of the space of simple functions $f:E\to X$
with respect to the {\em Poisson $p$-norm} introduced in Section \ref{sec:UMD};
the implied constants depend only on $p$ and $X$. Although the Poisson $p$-norm
is in general still a stochastic object, we can calculate it in terms of
deterministic quantities in case $X$ is a Hilbert space or an $L^q$-space.

The isomorphism \eqref{eqn:ItoUMDIntro} serves as a basis for the development of
a vector-valued Poisson Skorohod integral. In Section~\ref{sec:Malliavin} we
define a Malliavin derivative associated with a Poisson random measure in the
Banach space-valued setting following the Gaussian approach of \cite{MvNCO}. By
deriving an integration by parts formula we show that the Malliavin derivative
is a closable operator $D$ with respect to the Poisson $p$-norms. Assuming that
the Banach space $X$ is UMD, the adjoint operator $D^*$ is shown to extend the
It\^{o} stochastic integral with respect to the compensated Poisson random
measure (Theorem~\ref{thm:Skorokhod}). We conclude by proving a Clark-Ocone
representation formula in Theorem~\ref{thm:ClarkOcone}.
Our results extend similar results obtained by many authors in a scalar-valued
setting. References to this extensive literature are given in Sections
\ref{sec:Malliavin} and \ref{sec:CO}. To the best of our knowledge, the Banach
space-valued case has not been considered before.

\section{The Poisson stochastic integral}\label{sec:Poisson-SI}

We start by recalling the definition of a Poisson random measure and its
associated compensated Poisson random measure.
 Let $(\Om,\cF,\bP)$ be a
probability space and let $(E,\cE)$ be a measurable space.
We write $ \overline \N = \N \cup\{\infty\}$.

An \emph{integer-valued random measure} is a mapping
$N: \Omega\times\cE\to \overline \N$
with the following properties:
\begin{enumerate}
\renewcommand{\labelenumi}{\rm(\roman{enumi})}
\item For all $B\in\cE$ the mapping $N(B): \omega\mapsto N(\om,B)$ is measurable;
\item For all $\om\in\Om$, the mapping $B\mapsto N(\om,B)$ is a measure;
\end{enumerate}
The measure $\mu(B):= \E N(B)$ is called the \emph{intensity measure} of $N$.

\begin{definition}
An integer-valued random measure $N : \Omega\times\cE\to \overline \N $
with $\sigma$-finite intensity measure $\mu$ is called a {\em Poisson random measure} if the
following conditions are satisfied:
\begin{enumerate}
\renewcommand{\labelenumi}{\rm(\roman{enumi})}
\addtocounter{enumi}{2}
\item For all $B \in \cE$ the random variable $N(B)$ is
Poisson distributed with parameter $\mu(B)$;
\item For all pairwise disjoint sets $B_1,\ldots,B_n$ in $\cE$ the random variables
$N(B_1)$, $\ldots$, $N(B_n)$ are independent.
\end{enumerate}
\end{definition}
If $\mu(B) = \infty$ it is understood that $N(B) = \infty$ almost surely.
For the basic properties of Poisson random measures we refer to \cite[Chapter 6]{Cin}.

For $B \in \cE$ with  $\mu(B)<\infty$ we write
$$\tilde{N}(B) := N(B) - \mu(B).$$
It is customary to call $\tilde N$  the \emph{compensated Poisson random measure} associated with $N$
(even though it is not a random measure in the above sense, as it is defined on the sets of finite $\mu$-measure only).

Let $(J,\cJ,\nu)$ be a $\si$-finite measure space and let $N$ be a Poisson
random measure on $(\R_+\ti J,\cB(\R_+)\ti \cJ)$ with intensity measure $dt\ti \nu$. Throughout this
section we let
$\mathbb{F} = (\cF_t)_{t>0}$ be the filtration generated by the random
variables $\{N((s,u]\ti A) \ : \ 0\leq s<u\leq t, A \in \cJ\}$.

\begin{definition}
\label{def:simplePoisson-SI} Let $X$ be a Banach space. A process $\phi :\Om\ti \R_+\ti J\rightarrow X$ is a
\emph{simple, adapted $X$-valued process} if there are a finite partition
$\pi=\{0=t_1<t_2<\ldots<t_{l+1}<\infty\}$, random variables $F_{ijk} \in L^\infty(\Om,\cF_{t_i})$,
disjoint sets $A_1,\ldots,A_m$ in $\cJ$ satisfying
$\nu(A_j)<\infty$, and vectors
$x_{ijk} \in X$
for $i=1,\ldots,l$, $j=1,\ldots,m$, and $k=1,\ldots,n$ such that
\begin{equation}
\label{eqn:simpleBanach} \phi  = \sum_{i=1}^l\sum_{j=1}^m\sum_{k=1}^{n}F_{ijk}
\one_{(t_i,t_{i+1}]} \one_{A_j} x_{ijk}.
\end{equation}
Let $t>0$ and $B \in \cJ$. We define the \emph{(compensated) Poisson
stochastic integral of $\phi $} on $(0,t]\ti B$ with respect to $\tilde{N}$
by
\begin{equation*}
I_{t,B}(\phi ) = \int_{(0,t]\ti B} \phi  \dd\tilde{N} =
\sum_{i=1}^l\sum_{j=1}^m\sum_{k=1}^n F_{ijk} \tilde{N}((t_{i}\wedge
t,t_{i+1}\wedge t]\times (A_j\cap B)) x_{ijk},
\end{equation*}
where $s\wedge t = \min\{s,t\}$. For brevity, we write $I(\phi)$ to denote $I_{\infty,J}(\phi)$.
\end{definition}
\begin{definition}
\label{def:LpStochInt} Let $1\leq p<\infty$. A process $\phi :\Om\ti \R_+\ti
J\rightarrow X$ is said to be \emph{$L^p$-stochastically integrable} with
respect to
$\tilde{N}$ if there exists a sequence $(\phi _n)_{n\geq 1}$ of simple, adapted
$X$-valued processes such that
\begin{enumerate}[\rm(1)]
\item $\phi _n\rightarrow \phi $ almost everywhere;
\item For any $B \in \cJ$ and $t>0$ the sequence $\int_{(0,t]\ti B}
\phi _n\dd\tilde{N}$ converges in $L^p(\Om;X)$.
\end{enumerate}
In this case we define
\begin{equation*}
\int_{(0,t]\ti B} \phi  \dd\tilde{N} =
\lim_{n\rightarrow\infty} \int_{(0,t]\ti B} \phi _n \dd\tilde{N},
\end{equation*}
where the limit is taken in $L^p(\Om;X)$.
\end{definition}
A weaker notion of stochastic integrability, where $L^p$-convergence in the
second condition in Definition~\ref{def:LpStochInt} is replaced by convergence
in probability, has been studied extensively by Rosi\'{n}ski \cite{Ros84}.

\subsection{Conditions for \texorpdfstring{$L^p$}{Lp}-stochastic integrability}

To give substance to the class of $L^p$-stochastically integrable processes, we
study two different inequalities below. First, we search for so-called
Bichteler-Jacod inequalities
\begin{equation}
\label{eqn:B-J}
\Big(\E\Big\|\int_{\R_+\times J} \phi  \dd\tilde{N}\Big\|_{X}^p\Big)^{\frac{1}{p}}
\lesssim_{p,X} \|\phi \|_{\cI_{p,X}},
\end{equation}
where $\cI_{p,X}$ is a suitable Banach space. An inequality of this form implies
that every element in the closure of all simple, adapted processes in
$\cI_{p,X}$ is $L^p$-stochastically integrable. We also consider the reverse
inequality
\begin{equation}
\label{eqn:B-JRev}
\Big(\E\Big\|\int_{\R_+\times J} \phi  \dd\tilde{N}\Big\|_{X}^p\Big)^{\frac{1}{p}}
\gtrsim_{p,X} \|\phi \|_{\cI_{p,X}},
\end{equation}
which provides a necessary condition for stochastic integrability. Our main
results, Theorems~\ref{thm:typesBanach} and \ref{thm:cotypesBanach}, provide
inequalities of the forms \eqref{eqn:B-J} and \eqref{eqn:B-JRev}, respectively,
based on the martingale type and cotype of the Banach space $X$. Let us first
recall the definition of these notions.
\begin{definition}
If $X$ is a Banach space and $1\leq s\leq 2$, then $X$ is said to have
\emph{martingale type $s$} if every finite martingale difference sequence
$(d_i)$ in
$L^s(\Om;X)$ satisfies
$$\Big(\Ex\Big\|\sum_i d_i\Big\|_X^s\Big)^{\frac{1}{s}} \lesssim_{s,X}
\Big(\sum_i\Ex\|d_i\|_X^s\Big)^{\frac{1}{s}}.$$
On the other hand, if $2\leq r<\infty$, then $X$ is said to have
\emph{martingale cotype $r$} if every finite martingale difference sequence
$(d_i)$ in
$L^r(\Om;X)$ satisfies
$$\Big(\sum_i\Ex\|d_i\|_X^r\Big)^{\frac{1}{r}} \lesssim_{r,X}
\Big(\Ex\Big\|\sum_i d_i\Big\|_X^r\Big)^{\frac{1}{r}}.$$
\end{definition}
The notions of martingale type and cotype have been well studied by Pisier in
\cite{Pis75}, who established that $X$ has martingale type $s$ if and only if it
is $s$-uniformly smooth and has martingale cotype $r$ if and only if it is
$r$-uniformly convex. We shall use the following extrapolation principle,
observed in \cite[Remark 3.3]{Pis75}.
\begin{theorem}
\label{thm:PisierMtype} If $1<s\leq 2$, then $X$ has martingale type $s$ if and
only if for some (then any) $1\leq p<\infty$ every finite martingale difference
sequence
in $L^p(\Om;X)$ satisfies
\begin{equation}
\label{eqn:PisierMtypeEst} \Big(\Ex\Big\|\sum_i d_i\Big\|_X^p\Big)^{\frac{1}{p}}
\lesssim_{p,s,X}
\Big(\E\Big(\sum_i\|d_i\|_X^s\Big)^{\frac{p}{s}}\Big)^{\frac{1}{p}}.
\end{equation}
Moreover, if $2\leq r<\infty$, then $X$ has martingale cotype $r$ if and only if
for some (then any) $1\leq p<\infty$,
$$\Big(\E\Big(\sum_i\|d_i\|_X^r\Big)^{\frac{p}{r}}\Big)^{\frac{1}{p}}
\lesssim_{p,r,X} \Big(\Ex\Big\|\sum_i d_i\Big\|_X^p\Big)^{\frac{1}{p}}.$$
\end{theorem}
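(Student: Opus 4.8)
The plan is to recognize that the displayed family of inequalities \emph{contains} the definitions of martingale type $s$ and cotype $r$ as the single cases $p=s$ and $p=r$, and that the whole content of the theorem is an extrapolation in the exponent $p$. Indeed, setting $p=s$ in \eqref{eqn:PisierMtypeEst} collapses the right-hand side to $\big(\sum_i\Ex\n d_i\n_X^s\big)^{1/s}$ and the left-hand side to $\big(\Ex\n\sum_i d_i\n_X^s\big)^{1/s}$, which is \emph{verbatim} the definition of martingale type $s$; similarly $p=r$ reproduces the definition of martingale cotype $r$. It therefore suffices to show that the validity of such an inequality at one exponent forces it at every exponent, and the tool I would use for this is a Burkholder-type good-$\lambda$ inequality.

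For the type statement, fix a finite martingale difference sequence $(d_i)$ and set $M^*:=\sup_n\n\sum_{i\le n}d_i\n_X$ and $S:=\big(\sum_i\n d_i\n_X^s\big)^{1/s}$. The core step is a distributional estimate
\[
\P(M^*>\beta\lambda,\ S\le\delta\lambda)\le\epsilon(\beta,\delta)\,\P(M^*>\lambda)\qquad(\lambda>0),
\]
valid for some fixed $\beta>1$ and all small $\delta>0$, with $\epsilon(\beta,\delta)\to0$ as $\delta\to0$. Granting this, the standard good-$\lambda$ integration lemma gives $\n M^*\n_{L^p(\Om)}\lesssim_{p,s,X}\n S\n_{L^p(\Om)}$ for every $0<p<\infty$, and since $\n\sum_i d_i\n_X\le M^*$ pointwise this is exactly \eqref{eqn:PisierMtypeEst}. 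Because the base distributional estimate feeding the good-$\lambda$ inequality can be extracted, via Chebyshev, from the inequality at any \emph{single} exponent, this argument simultaneously shows that the inequality at one $p$ is equivalent to its validity at all $p$, and in particular to the case $p=s$, that is, to martingale type $s$; this yields the claimed ``for some (then any)''.

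To prove the good-$\lambda$ inequality I would use stopping times: let $\tau:=\inf\{n:\n\sum_{i\le n}d_i\n_X>\lambda\}$ and $\rho:=\inf\{n:(\sum_{i\le n}\n d_i\n_X^s)^{1/s}>\delta\lambda\}$, both stopping times for $\bF$ since the relevant partial sums are adapted. On the event $\{M^*>\beta\lambda\}\cap\{S\le\delta\lambda\}$ the martingale must gain norm at least $(\beta-1)\lambda$ after time $\tau$ while keeping its square-function increment below $\delta\lambda$; the increments $d_i\one_{\{\tau<i\le\rho\}}$ again form a martingale difference sequence, because $\{\tau<i\le\rho\}\in\F_{i-1}$ is predictable, so the case $p=s$ of the inequality bounds the $L^s$-norm of this stopped increment by a constant times $\delta\lambda$, and Chebyshev converts this into the factor $\epsilon(\beta,\delta)$. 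This stopping-and-restarting step, together with the verification that the truncated increments are an admissible martingale difference sequence to which the base inequality applies, is where the real work lies and is the step I expect to be the main obstacle; the a priori finiteness of $\n M^*\n_p$ needed to run the integration lemma is secured by first stopping the martingale at a finite index and letting it tend to infinity by monotone convergence.

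The cotype statement is handled symmetrically. With $S:=\big(\sum_i\n d_i\n_X^r\big)^{1/r}$ and the same $M^*$, the case $p=r$ is the definition of martingale cotype $r$, and the reversed good-$\lambda$ inequality $\P(S>\beta\lambda,\ M^*\le\delta\lambda)\le\epsilon(\beta,\delta)\,\P(S>\lambda)$ — proved by the analogous stopping-time argument using the $p=r$ estimate — gives $\n S\n_{L^p(\Om)}\lesssim_{p,r,X}\n M^*\n_{L^p(\Om)}$ for all $0<p<\infty$. It remains only to replace $M^*$ by the final sum $\sum_i d_i$: for $p>1$ this is Doob's maximal inequality $\n M^*\n_p\lesssim_p\n\sum_i d_i\n_p$, while the endpoint $p=1$ is recovered from the Davis decomposition, which together cover the full range $1\le p<\infty$.
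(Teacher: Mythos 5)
Note first that the paper does not prove this theorem at all: it is quoted from Pisier \cite[Remark 3.3]{Pis75} as a known extrapolation principle. So your attempt can only be judged on its own merits. Your blueprint --- observe that $p=s$ (resp.\ $p=r$) is verbatim the definition of martingale type (resp.\ cotype), and extrapolate in $p$ by a good-$\lambda$ inequality proved with stopping times --- is indeed the classical Burkholder--Gundy route that underlies the cited result, so the strategy is the right one. But the execution has a genuine gap exactly at the step you flag as ``where the real work lies,'' and the assertion you make there is false as stated.

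The problem is the uncontrolled jump at the stopping time $\rho$. The truncated increments $e_i = d_i\one_{\{\tau<i\le\rho\}}$ do form a martingale difference sequence, as you say, but their $\ell^s$-control is
$\sum_{\tau<i\le\rho}\|d_i\|_X^s \le (\delta\lambda)^s + \|d_\rho\|_X^s$,
and the definition of $\rho$ only bounds the sum \emph{strictly before} $\rho$; the increment $d_\rho$ itself can be arbitrarily large. Hence the claim that ``the case $p=s$ of the inequality bounds the $L^s$-norm of this stopped increment by a constant times $\delta\lambda$'' is unjustified: applying martingale type $s$ gives $\E\|\sum_i e_i\|_X^s \lesssim (\delta\lambda)^s\,\P(\tau<\infty) + \E\big(\|d_\rho\|_X^s\one_{\{\tau<\rho<\infty\}}\big)$, and the last term can be made as large as one wishes while $\P(M^*>\lambda)$ stays fixed (e.g.\ force the walk above $\lambda$ with many tiny increments on an event of positive probability, then insert one enormous symmetric increment), so Chebyshev cannot produce the factor $\varepsilon(\beta,\delta)\,\P(M^*>\lambda)$. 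The obvious repair --- summing over $\tau<i<\rho$ to exclude the jump --- destroys the martingale property, since $\{i<\rho\}$ is $\F_i$-measurable but not $\F_{i-1}$-measurable. This is the well-known ``large jump'' obstruction for discrete-time good-$\lambda$ arguments; the standard fix is a Davis-type decomposition that splits off the increments not dominated by a predictable majorant, estimates that part separately, and runs the good-$\lambda$ argument only on the part with predictably bounded jumps. That is real additional work which your sketch does not contain. A second, smaller error: your treatment of the endpoint $p=1$ in the cotype half is wrong. Davis' theorem compares $\E\big(\sum_i\|d_i\|^r\big)^{1/r}$ with $\E\sup_n\big\|\sum_{i\le n}d_i\big\|$, not with $\E\big\|\sum_i d_i\big\|$, and in fact the cotype inequality with the terminal value on the right fails at $p=1$ already for $X=\R$, $r=2$ (simple random walk stopped at the hitting time of $1$: the left side diverges while $\E|f_N|\le 2$). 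So the cotype assertion genuinely requires either $1<p<\infty$ or the maximal function on the right --- a caveat that applies to the quoted statement as well, though the paper only ever invokes it for $p>1$.
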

To obtain Bichteler-Jacod inequalities we need an estimate for the right hand
side of \eqref{eqn:PisierMtypeEst}, which is the content of
Lemma~\ref{lem:littleS} below. In the proof of this lemma, we shall use
the Burkholder-Rosenthal inequalities (see \cite[inequality (21.5)]{Bur73}): if
$2\leq p<\infty$, then for any scalar-valued martingale difference sequence
$(d_i)$ in $L^p(\Om)$,
\begin{equation}
\label{eqn:ClassicalBR}
\Big(\E\Big|\sum_{i=1}^n d_i\Big|^p\Big)^{\frac{1}{p}} \lesssim_p
\max\Big\{\Big(\sum_{i=1}^n\E|d_i|^p\Big)^{\frac{1}{p}},
\Big(\E\Big(\sum_{i=1}^n
\E_{i-1}|d_i|^2\Big)^{\frac{p}{2}}\Big)^{\frac{1}{p}}\Big\}.
\end{equation}
We also need the dual version of Doob's maximal inequality (see
\eqref{eqn:Stein} below for a more general statement). Suppose that $1\leq
p<\infty$. If $(f_i)$ is a sequence of non-negative random variables, then
\begin{equation}
\label{eqn:dualDoob}
\E\Big(\sum_{i\geq 1} \E_i(f_i)\Big)^p \lesssim_p \E\Big(\sum_{i\geq 1} f_i\Big)^p.
\end{equation}
\begin{lemma}
\label{lem:littleS} Let $X$ be a Banach space. Let $(\cF_i)_{i\geq 1}$ be a
filtration and let $(\E_i)_{i\geq 1}$ denote the sequence of associated
conditional expectations, with the convention that $\E_0 = \E$. Let $(\xi_i)$ be a finite sequence in $L^p(\Om;X)$
which is adapted to the given filtration. If $0<s\leq p<\infty$, then
\begin{equation}
\label{eqn:littleSLessp} \Big(\Ex\Big(\sum_i
\|\xi_i\|_X^s\Big)^{\frac{p}{s}}\Big)^{\frac{1}{p}} \eqsim_{p,s}
\max\Big\{\Big(\sum_i
\Ex\|\xi_i\|_X^p\Big)^{\frac{1}{p}}, \Big(\E\Big(\sum_i
\Ex_{i-1}\|\xi_i\|_X^s\Big)^{\frac{p}{s}}\Big)^{\frac{1}{p}}\Big\}.
\end{equation}
\end{lemma}
\begin{proof}
Since $s\leq p$ we have $l^s\hookrightarrow l^p$ contractively and so
$$\Big(\sum_i \Ex\|\xi_i\|_X^p\Big)^{\frac{1}{p}} \leq \Big(\Ex\Big(\sum_i
\|\xi_i\|_X^s\Big)^{\frac{p}{s}}\Big)^{\frac{1}{p}}.$$
Moreover, \eqref{eqn:dualDoob} implies
$$\Big(\E\Big(\sum_i
\Ex_{i-1}\|\xi_i\|_X^s\Big)^{\frac{p}{s}}\Big)^{\frac{1}{p}} \lesssim_{p,s}
\Big(\Ex\Big(\sum_i \|\xi_i\|_X^s\Big)^{\frac{p}{s}}\Big)^{\frac{1}{p}}.$$
This shows that $\gtrsim_{p,s}$ in \eqref{eqn:littleSLessp} holds. For the
proof of the reverse inequality, note that $\E\|\xi_i\|_X^s<\infty$ for all $i$.
By the triangle inequality,
$$\Big(\Ex\Big(\sum_i \|\xi_i\|_X^s\Big)^{\frac{p}{s}}\Big)^{\frac{s}{p}} \leq
\Big(\Ex\Big|\sum_i \|\xi_i\|_X^s -
\Ex_{i-1}\|\xi_i\|_X^s\Big|^{\frac{p}{s}}\Big)^{\frac{s}{p}}
 + \Big(\E\Big(\sum_i
\Ex_{i-1}\|\xi_i\|_X^s\Big)^{\frac{p}{s}}\Big)^{\frac{s}{p}}.$$
Notice that the sequence $(\|\xi_i\|_X^s - \Ex_{i-1}\|\xi_i\|_X^s)_{i\geq 1}$ is
a martingale difference sequence. Suppose first that $p\leq 2s$. In this case we
can apply the martingale type $\frac{p}{s}$ property of $\R$ to find
$$\Big(\Ex\Big|\sum_i \|\xi_i\|_X^s -
\Ex_{i-1}\|\xi_i\|_X^s\Big|^{\frac{p}{s}}\Big)^{\frac{s}{p}} \lesssim_{p,s}
\Big(\sum_i \Ex\Big|\|\xi_i\|_X^s -
\Ex_{i-1}\|\xi_i\|_X^s\Big|^{\frac{p}{s}}\Big)^{\frac{s}{p}}.$$
The result now follows by the triangle inequality and the conditional Jensen
inequality,
\begin{align}
\label{eqn:estpLess2s}
\Big(\sum_i \Ex\Big|\|\xi_i\|_X^s -
\Ex_{i-1}\|\xi_i\|_X^s\Big|^{\frac{p}{s}}\Big)^{\frac{s}{p}} & \leq \Big(\sum_i
\Ex\|\xi_i\|_X^p\Big)^{\frac{s}{p}} +
\Big(\sum_i\E(\Ex_{i-1}\|\xi_i\|_X^s)^{\frac{p}{s}}\Big)^{\frac{s}{p}} \nonumber
\\
& \leq 2 \Big(\sum_i \Ex\|\xi_i\|_X^p\Big)^{\frac{s}{p}}.
\end{align}
In case $p>2s$ we apply the Burkholder-Rosenthal inequalities
\eqref{eqn:ClassicalBR} to find
\begin{align*}
\Big(\Ex\Big|\sum_i \|\xi_i\|_X^s -
\Ex_{i-1}\|\xi_i\|_X^s\Big|^{\frac{p}{s}}\Big)^{\frac{s}{p}} & \lesssim_{p,s}
\max\Big\{\Big(\sum_i\Ex\Big|\|\xi_i\|_X^s -
\Ex_{i-1}\|\xi_i\|_X^s\Big|^{\frac{p}{s}}\Big)^{\frac{s}{p}}, \\
& \qquad \Big(\E\Big(\sum_i\Ex_{i-1}\Big(\|\xi_i\|_X^s -
\Ex_{i-1}\|\xi_i\|_X^s\Big)^2\Big)^{\frac{p}{2s}}\Big)^{\frac{s}{p}}\Big\}.
\end{align*}
We estimate the first term on the right hand side as in \eqref{eqn:estpLess2s}.
To the second term we apply the conditional triangle
inequality,
\begin{equation*}
\Big(\sum_i \E_{i-1}|f_i + g_i|^2\Big)^{\frac{1}{2}} \leq \Big(\sum_i
\E_{i-1}|f_i|^2\Big)^{\frac{1}{2}} + \Big(\sum_i
\E_{i-1}|g_i|^2\Big)^{\frac{1}{2}},
\end{equation*}
which is valid for all sequences of scalar-valued random variables $(f_i)_{i\geq
1}$ and $(g_i)_{i\geq 1}$, and find
\begin{align*}
& \Big(\E\Big(\sum_i \Ex_{i-1}\Big(\|\xi_i\|_X^s -
\Ex_{i-1}\|\xi_i\|_X^s\Big)^2\Big)^{\frac{p}{2s}}\Big)^{\frac{s}{p}} \\
& \qquad \leq  \Big(\E\Big(\sum_i
\Ex_{i-1}\|\xi_i\|_X^{2s}\Big)^{\frac{p}{2s}}\Big)^{\frac{s}{p}}
+ \Big(\E\Big(\sum_i
(\Ex_{i-1}\|\xi_i\|_X^s)^2\Big)^{\frac{p}{2s}}\Big)^{\frac{s}{p}} \\
& \qquad \leq \Big(\E\Big(\sum_i
\Ex_{i-1}\|\xi_i\|_X^{2s}\Big)^{\frac{p}{2s}}\Big)^{\frac{s}{p}}
 + \Big(\E\Big(\sum_i
\Ex_{i-1}\|\xi_i\|_X^s\Big)^{\frac{p}{s}}\Big)^{\frac{s}{p}}.
\end{align*}
Since $s\leq 2s\leq p$ we can find $0\leq \theta\leq 1$ such that $\frac{1}{2s}
= \frac{\theta}{s} + \frac{1-\theta}{p}$. By the conditional H\"{o}lder
inequality,
\begin{align*}
\Big(\sum_i \Ex_{i-1}\|\xi_i\|_X^{2s}\Big)^{\frac{1}{2s}} & \leq \Big(\sum_i
\Ex_{i-1}\|\xi_i\|_X^{s}\Big)^{\frac{\theta}{s}}\Big(\sum_i
\Ex_{i-1}\|\xi_i\|_X^{p}\Big)^{\frac{1-\theta}{p}} \\
& \leq \max\Big\{\Big(\sum_i
\Ex_{i-1}\|\xi_i\|_X^{s}\Big)^{\frac{1}{s}},\Big(\sum_i
\Ex_{i-1}\|\xi_i\|_X^{p}\Big)^{\frac{1}{p}}\Big\}.
\end{align*}
As a consequence,
$$\Big(\E\Big(\sum_i
\Ex_{i-1}\|\xi_i\|_X^{2s}\Big)^{\frac{p}{2s}}\Big)^{\frac{s}{p}} \leq
\max\Big\{\Big(\sum_i\Ex\|\xi_i\|^p\Big)^{\frac{1}{p}}, \Big(\E\Big(\sum_i
\Ex_{i-1}\|\xi_i\|_X^s\Big)^{\frac{p}{s}}\Big)^{\frac{1}{p}}\Big\}^s.$$
Collecting our estimates, we conclude that \eqref{eqn:littleSLessp} holds.
\end{proof}
In the next lemma we use the following inequality due to E.M. Stein (see \cite{Ste70},
Chapter IV, the proof of Theorem 8). Let $1<p<\infty$ and $1\leq s<\infty$. If $(f_i)_{i\geq 1}$
is a sequence of scalar-valued random variables, then
\begin{equation}
\label{eqn:Stein}
\Big\|\Big(\sum_i|\E_i(f_i)|^s\Big)^{\frac{1}{s}}\Big\|_{L^p(\Om)}
\lesssim_{p,s} \Big\|\Big(\sum_i|f_i|^s\Big)^{\frac{1}{s}}\Big\|_{L^p(\Om)}.
\end{equation}
\begin{lemma}
\label{lem:condIneqX} Let $X$ be a Banach space. Let $0=t_1<t_2<\ldots<t_{l+1}<\infty$, $F_{ijk} \in L^{\infty}(\Om)$, $x_{ijk} \in
X$,
and let $A_1,\ldots,A_m$ be disjoint sets in $\cJ$ satisfying $\nu(A_j)<\infty$
for $i=1,\ldots,l$, $j=1,\ldots,m$ and $k=1,\ldots,n$. Define
\[
\phi   :=  \sum_{i,j,k} F_{ijk} \oto \one_{(t_i,t_{i+1}]} \oto \one_{A_j} \oto
x_{ijk}
\]
and let $\tilde{\phi }$ be the associated simple adapted $X$-valued process given by
\[
\tilde{\phi }  :=  \sum_{i,j,k} \E(F_{ijk}|\cF_{t_i}) \oto \one_{(t_i,t_{i+1}]}
\oto \one_{A_j} \oto x_{ijk}.
\]
If $1<p<\infty$ and $1\leq s<\infty$, then
$$\|\tilde{\phi }\|_{L^p(\Om;L^s((0,t]\ti J;X))} \lesssim_{p,s}
\|\phi \|_{L^p(\Om;L^s((0,t]\ti J;X))}.$$
\end{lemma}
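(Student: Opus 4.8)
The plan is to reduce the vector-valued estimate to the scalar Stein inequality \eqref{eqn:Stein}. First I would compute both norms explicitly. Since the intervals $(t_i,t_{i+1}]$ are pairwise disjoint and the sets $A_j$ are pairwise disjoint, for $\P$-almost every $\om$ and every $(r,u)\in(0,t]\ti J$ at most one pair $(i,j)$ contributes to $\phi(\om,r,u)$. Writing $\xi_{ij}:=\sum_k F_{ijk}x_{ijk}\in L^\infty(\Om;X)$ and $c_{ij}:=\big(|(t_i,t_{i+1}]\cap(0,t]|\,\nu(A_j)\big)^{1/s}$, the disjointness gives the pointwise-in-$\om$ identity
\begin{equation*}
\|\phi(\om,\cdot,\cdot)\|_{L^s((0,t]\ti J;X)}^s=\sum_{i,j}c_{ij}^s\,\|\xi_{ij}(\om)\|_X^s ,
\end{equation*}
and the analogous identity for $\tilde\phi$ holds with $\xi_{ij}$ replaced by $\E(\xi_{ij}\mid\cF_{t_i})=\sum_k\E(F_{ijk}\mid\cF_{t_i})x_{ijk}$, using that the $x_{ijk}$ are deterministic.

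The second step is to strip off the vector structure. Since conditional expectation is a contraction for the Bochner norm (Jensen's inequality applied to the convex map $x\mapsto\|x\|_X$), one has the pointwise bound $\|\E(\xi_{ij}\mid\cF_{t_i})\|_X\le\E(\|\xi_{ij}\|_X\mid\cF_{t_i})$ almost surely. Setting $h_{ij}:=c_{ij}\|\xi_{ij}\|_X$, a non-negative scalar random variable, and pulling the deterministic factor $c_{ij}$ inside the conditional expectation, I obtain (writing $L^s$ for $L^s((0,t]\ti J;X)$)
\begin{equation*}
\|\tilde\phi\|_{L^p(\Om;L^s)}=\Big\|\Big(\sum_{i,j}c_{ij}^s\|\E(\xi_{ij}\mid\cF_{t_i})\|_X^s\Big)^{1/s}\Big\|_{L^p(\Om)}\le\Big\|\Big(\sum_{i,j}\big|\E(h_{ij}\mid\cF_{t_i})\big|^s\Big)^{1/s}\Big\|_{L^p(\Om)} .
\end{equation*}

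Finally I would invoke Stein's inequality \eqref{eqn:Stein}. The only wrinkle is that the conditional expectations $\E(\cdot\mid\cF_{t_i})$ are indexed by the single parameter $i$, whereas the family $(h_{ij})$ carries a double index. To accommodate this I would order the pairs $(i,j)$ so that $i$ is non-decreasing and attach to the slot $(i,j)$ the $\sigma$-algebra $\cF_{t_i}$; since $\cF_{t_1}\subseteq\cdots\subseteq\cF_{t_l}$, this is a genuine non-decreasing filtration along the enumeration (constant on each block of fixed $i$), whose conditional expectation at slot $(i,j)$ is exactly $\E(\cdot\mid\cF_{t_i})$. Applying \eqref{eqn:Stein} to $(h_{ij})$ with respect to this filtration yields
\begin{equation*}
\Big\|\Big(\sum_{i,j}\big|\E(h_{ij}\mid\cF_{t_i})\big|^s\Big)^{1/s}\Big\|_{L^p(\Om)}\lesssim_{p,s}\Big\|\Big(\sum_{i,j}|h_{ij}|^s\Big)^{1/s}\Big\|_{L^p(\Om)}=\|\phi\|_{L^p(\Om;L^s)} ,
\end{equation*}
the last equality being the norm identity from the first step. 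Combining the three displays proves the claim.

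The main point — and the only place requiring genuine care — is the passage from the vector-valued conditional expectation to the scalar one in the second step: once the $X$-norm has been peeled off by the contraction property of $\E(\cdot\mid\cF_{t_i})$, everything reduces to a bookkeeping application of the scalar Stein inequality, with the double index absorbed by the block-constant filtration above. I do not anticipate any serious analytic difficulty beyond verifying the pointwise norm identity and the measurability needed to define the conditional expectations $\E(F_{ijk}\mid\cF_{t_i})$.
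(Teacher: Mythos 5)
Your proposal is correct and follows essentially the same route as the paper's proof: compute the $L^s((0,t]\times J;X)$-norm explicitly using disjointness, peel off the $X$-norm via the conditional Jensen inequality, and apply the scalar Stein inequality \eqref{eqn:Stein} along the block-constant filtration $\cF_{(i,j)}:=\cF_{t_i}$ in lexicographic order, which is exactly the paper's device. The only (harmless) cosmetic difference is that you keep a general $t$ via the lengths $|(t_i,t_{i+1}]\cap(0,t]|$, whereas the paper first reduces to $t=t_{l+1}$.
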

\begin{proof}
We may assume $t=t_{l+1}$. If we define $\cF_{i,j} = \cF_{t_i}$ for all $j$,
then $(\cF_{i,j})$ is a filtration with respect to the lexicographic ordering. By the
conditional Jensen inequality and Stein\textquoteright s inequality \eqref{eqn:Stein} we obtain
\begin{align*}
& \|\tilde{\phi }\|_{L^p(\Om;L^s((0,t]\ti J;X))} \\
& \qquad = \Big\|\Big(\sum_{i,j}(t_{i+1} -
t_i)\nu(A_j)\Big\|\sum_k\E(F_{ijk}|\cF_{t_i})x_{ijk}\Big\|_{X}^s\Big)^{\frac{1
}{s}}\Big\|_{L^p(\Om)} \\
& \qquad = \Big\|\Big(\sum_{i,j}(t_{i+1} - t_i)\nu(A_j)\Big\|\E\Big(\sum_k
F_{ijk}
x_{ijk}\Big|\cF_{i,j}\Big)\Big\|_{X}^s\Big)^{\frac{1}{s}}\Big\|_{L^p(\Om)} \\
& \qquad \leq \Big\|\Big(\sum_{i,j}(t_{i+1} -
t_i)\nu(A_j)\Big(\E\Big(\Big\|\sum_k F_{ijk}
x_{ijk}\Big\|_{X}\Big|\cF_{i,j}\Big)\Big)^s\Big)^{\frac{1}{s}}\Big\|_{L^p(\Om)
} \\
& \qquad \lesssim_{p,s} \Big\|\Big(\sum_{i,j}(t_{i+1} - t_i)\nu(A_j)\Big\|\sum_k
F_{ijk} x_{ijk}\Big\|_{X}^s\Big)^{\frac{1}{s}}\Big\|_{L^p(\Om)} \\
& \qquad = \|\phi \|_{L^p(\Om;L^s((0,t]\ti J;X))}.
\end{align*}
\end{proof}
We will use the following elementary observation (see \cite[Lemma 3.4]{Dir12}).
\begin{lemma}
\label{lem:PoissonMoments} Let $N$ be a Poisson distributed random variable with
parameter $0\leq \lambda\leq 1$. Then for every $1\leq p<\infty$ there exist
constants
$b_p, c_p>0$ such that
\begin{equation*}
b_p \lambda \leq \E|N-\lambda|^p \leq c_p \lambda.
\end{equation*}
\end{lemma}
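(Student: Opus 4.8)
The plan is to prove Lemma~\ref{lem:PoissonMoments}, which asserts two-sided bounds $b_p\lambda \leq \E|N-\lambda|^p \leq c_p\lambda$ for a Poisson random variable $N$ with parameter $\lambda \in [0,1]$. The key structural observation is that the restriction $\lambda \leq 1$ is essential: for large $\lambda$ the central moments grow like $\lambda^{p/2}$ (by the central limit theorem), so the \emph{linear} bound in $\lambda$ genuinely relies on working in the small-parameter regime. My strategy is to compute $\E|N-\lambda|^p$ directly from the Poisson mass function and extract the leading linear term in $\lambda$.

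First I would write out the expectation explicitly as a series,
\[
\E|N-\lambda|^p = \sum_{n=0}^\infty |n-\lambda|^p \, e^{-\lambda}\frac{\lambda^n}{n!}.
\]
For the \textbf{upper bound}, I would split off the $n=0$ term, which contributes $\lambda^p e^{-\lambda} \leq \lambda$ since $\lambda \leq 1$, and bound the remaining terms ($n\geq 1$) by pulling out one factor of $\lambda$. Concretely, for $n\geq 1$ we have $\lambda^n/n! = \lambda \cdot \lambda^{n-1}/n!$, and on the range $0\leq \lambda \leq 1$ the factor $|n-\lambda|^p$ is dominated by a constant multiple of $n^p$ (uniformly in $\lambda \in [0,1]$); the resulting series $\sum_{n\geq 1} n^p \lambda^{n-1}/n!$ is bounded by $\sum_{n\geq 1} n^p/(n-1)!$, which is a convergent constant $c_p$ independent of $\lambda$. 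Thus $\E|N-\lambda|^p \leq c_p \lambda$.

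For the \textbf{lower bound}, the cleanest approach is to retain only the single term $n=1$ from the series, which is already linear in $\lambda$:
\[
\E|N-\lambda|^p \geq |1-\lambda|^p \, e^{-\lambda}\lambda \geq \big(\inf_{0\leq\lambda\leq 1}|1-\lambda|^p e^{-\lambda}\big)\,\lambda.
\]
The infimum here is problematic at $\lambda = 1$, where $|1-\lambda|^p$ vanishes, so a single term will not suffice near $\lambda = 1$. The \emph{main obstacle} is therefore producing a lower bound that does not degenerate as $\lambda \to 1$. To fix this I would instead keep the two terms $n=0$ and $n=2$ (whose contributions are $\lambda^p e^{-\lambda}$ and $(2-\lambda)^p e^{-\lambda}\lambda^2/2$); since $|1-\lambda|$ small forces $\lambda$ near $1$, where both $|0-\lambda|=\lambda$ and $|2-\lambda|=2-\lambda$ stay bounded away from $0$, the sum of a well-chosen finite collection of terms is bounded below by $b_p \lambda$ uniformly on $[0,1]$. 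Equivalently, one may argue by continuity and compactness: the function $\lambda \mapsto \lambda^{-1}\E|N-\lambda|^p$ extends continuously to a strictly positive function on the compact interval $[0,1]$ (its value at $\lambda=0$ being $\lim_{\lambda\to 0}\lambda^{-1}\sum_{n\geq 1}|n-\lambda|^p e^{-\lambda}\lambda^n/n! = 1$, coming from the $n=1$ term), hence attains a positive minimum $b_p$. This compactness argument simultaneously yields both bounds and is the approach I would expect to be shortest to write cleanly.
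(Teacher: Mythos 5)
Your proof is correct, and it is worth noting that the paper itself contains no proof of Lemma~\ref{lem:PoissonMoments}: it is quoted as an "elementary observation" from \cite[Lemma 3.4]{Dir12}, so a self-contained argument like yours is exactly what is needed. The upper bound via the series $\E|N-\lambda|^p=\sum_{n\ge 0}|n-\lambda|^p e^{-\lambda}\lambda^n/n!$ is fine: the $n=0$ term is $\lambda^pe^{-\lambda}\le\lambda$ and the tail is $\le \lambda\sum_{n\ge1}n^p/(n-1)!$. For the lower bound, two small corrections. First, the pair of terms $n=0$ and $n=2$ \emph{alone} does not suffice: near $\lambda=0$ they contribute $O(\lambda^p)$ and $O(\lambda^2)$, both $o(\lambda)$ when $p>1$, so your "well-chosen finite collection" must include $n=1$; a clean split is to use the $n=1$ term for $\lambda\le 1/2$, giving $|1-\lambda|^pe^{-\lambda}\lambda\ge 2^{-p}e^{-1}\lambda$, and the $n=2$ term for $\lambda\ge 1/2$, giving $(2-\lambda)^pe^{-\lambda}\lambda^2/2\ge e^{-1}\lambda/4$. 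Second, in your compactness formulation the limit of $\lambda^{-1}\E|N-\lambda|^p$ as $\lambda\to 0^+$ equals $1$ only when $p>1$; for $p=1$ the $n=0$ term contributes $\lambda^{p-1}e^{-\lambda}\to 1$ as well, so the limit is $2$. Neither point damages the argument, since the compactness reasoning only uses that the continuous extension of $\lambda\mapsto\lambda^{-1}\E|N-\lambda|^p$ to $[0,1]$ is strictly positive (positive limit at $0$, and positivity on $(0,1]$ from, say, the $n=2$ term), whence it attains a positive minimum $b_p$ and a finite maximum $c_p$; with these repairs your write-up is a complete and correct proof.
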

\begin{remark}
\label{rem:assumptionSimpleProcess} By refining the partition $\pi$ in
Definition~\ref{def:simplePoisson-SI} if necessary, we can and will always assume
that
$(t_{i+1}-t_i)\nu(A_j)\leq 1$ for all $i=1,\ldots,l$, $j=1,\ldots,m$. This will
allow us to apply Lemma~\ref{lem:PoissonMoments} to the compensated Poisson
random
variables $\tilde{N}((t_{i}\wedge t,t_{i+1}\wedge t]\times (A_j\cap B))$.
\end{remark}
Finally, we shall use the following reverse of the dual Doob inequality in
\eqref{eqn:dualDoob}. The authors learned this result from \cite[Theorem 7.1]{JuX03}, where this result is even obtained for non-commutative random variables. We
give a simple proof for the commutative case following \cite{JuX03}, which
yields an improved constant.
\begin{lemma}[Reverse dual Doob inequality]
\label{lem:dualDoobRev} Fix $0<p\leq 1$. Let $\bF$ be a filtration and let $(\E_i)_{i\geq 1}$
be the associated sequence of conditional expectations.
If $(f_i)_{i\geq 1}$ is a sequence of non-negative random variables in $L^1(\Om)$, then
$$\Big(\E\Big|\sum_{i\geq 1} f_i\Big|^p\Big)^{\frac{1}{p}} \leq p^{-1} \Big(\E\Big|\sum_{i\geq 1}
\E_i(f_i)\Big|^p\Big)^{\frac{1}{p}}.$$
\end{lemma}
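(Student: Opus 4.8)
The plan is to reduce the statement to the single inequality $\E\big[S\,T^{p-1}\big]\le p^{-1}\E[T^p]$, where $S:=\sum_{i\ge1}f_i$ and $T:=\sum_{i\ge1}\E_i(f_i)$, and then to upgrade this to the claimed bound on $\E[S^p]$ by an application of Hölder's inequality. First I would dispose of the trivial cases: for $p=1$ the assertion is an equality, since $\E[\sum_i\E_i(f_i)]=\E[\sum_i f_i]$ by the tower property; and if $\E[T^p]\in\{0,\infty\}$ the inequality holds trivially (when $\E[T^p]=0$ one checks that $f_i=0$ a.s.\ for every $i$, whence $S=0$ a.s.). Thus I may assume $0<p<1$ and $0<\E[T^p]<\infty$; in particular $T<\infty$ a.s., and on the set $\{T=0\}$ one has $f_i=0$ a.s.\ for all $i$ and hence $S=0$ a.s., so that all quantities below may be evaluated on $\{T>0\}$ (a rigorous treatment replaces $T$ by $T+\eps$ and lets $\eps\downarrow0$).

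For the key inequality, set $T_n:=\sum_{i=1}^n\E_i(f_i)$ and $T_0:=0$, so that $T_n\uparrow T$. Telescoping and using the concavity of $t\mapsto t^p$ (which gives $t_n^p-t_{n-1}^p\ge p\,t_n^{p-1}(t_n-t_{n-1})$ for $0\le t_{n-1}\le t_n$), I obtain
\[
T^p=\sum_{n\ge1}\big(T_n^p-T_{n-1}^p\big)\ge p\sum_{n\ge1}T_n^{p-1}\,\E_n(f_n).
\]
The decisive point is that $T_n$ is $\F_n$-measurable, since each summand $\E_i(f_i)$ with $i\le n$ is $\F_i$-measurable and hence $\F_n$-measurable. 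Consequently $T_n^{p-1}$ is $\F_n$-measurable, and pulling it inside the conditional expectation yields
\[
\E\big[T_n^{p-1}\,\E_n(f_n)\big]=\E\big[\E_n\big(T_n^{p-1}f_n\big)\big]=\E\big[T_n^{p-1}f_n\big]\ge\E\big[T^{p-1}f_n\big],
\]
where the last step uses $T_n\le T$ together with $p-1<0$. Summing over $n$ and invoking Tonelli's theorem gives $\E[T^p]\ge p\,\E[T^{p-1}S]$, which is the desired inequality.

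Finally I would convert this into a bound on $\E[S^p]$. Writing $S^p=\big(S\,T^{p-1}\big)^p\,T^{p(1-p)}$ and applying Hölder's inequality with the conjugate exponents $1/p$ and $1/(1-p)$ yields
\[
\E[S^p]\le\big(\E[S\,T^{p-1}]\big)^p\big(\E[T^p]\big)^{1-p}\le\big(p^{-1}\E[T^p]\big)^p\big(\E[T^p]\big)^{1-p}=p^{-p}\,\E[T^p],
\]
and taking $p$-th roots gives the claim. The main obstacle is the telescoping step in the middle paragraph: the whole argument hinges on recognizing that the partial sums of the \emph{conditioned} terms $\E_i(f_i)$ form an adapted sequence, which is exactly what permits exchanging $\E_n(f_n)$ for $f_n$ against the weight $T_n^{p-1}$; the remaining integrability bookkeeping near $\{T=0\}$ is routine and is handled by the $\eps$-regularization mentioned above.
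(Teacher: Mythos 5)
Your proof is correct and is essentially the paper's own argument: the same H\"older factorization $\E[S^p]\le(\E[ST^{p-1}])^p(\E[T^p])^{1-p}$, the same concavity inequality $(b-a)b^{p-1}\le p^{-1}(b^p-a^p)$, and the same trick of pulling the $\F_n$-measurable weight $T_n^{p-1}$ through the conditional expectation to trade $\E_n(f_n)$ for $f_n$. The only differences are presentational: you run the chain of estimates starting from $T^p$ rather than from $\E[ST^{p-1}]$, and you handle the degeneracy at $\{T=0\}$ by separate trivial cases plus a remarked $\eps$-regularization, whereas the paper builds the shift in from the start by setting $B_0=\eps$.
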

\begin{proof}
Fix $\eps>0$. Define two sequences $A_n = \sum_{i=1}^n f_i$ and $B_n = \eps +
\sum_{i=1}^n \E_i(f_i)$. Set $A=A_{\infty}$, $B=B_{\infty}$ and $B_0 = \eps$. Let
us first observe that by H\"{o}lder's inequality,
$$\E(A^p) = \E(A^pB^{-p(1-p)}B^{p(1-p)}) \leq
(\E(AB^{-(1-p)}))^p(\E(B^p))^{1-p}.$$
We will show that
$$\E(AB^{-(1-p)})\leq p^{-1}\E(B^p-\eps^p).$$
Indeed,
\begin{align*}
\E(AB^{-(1-p)}) & \leq \sum_{i\geq 1}\E(f_iB_i^{-(1-p)}) \\
& = \sum_{i\geq 1}\E(\E_i(f_i)B_i^{-(1-p)}) = \sum_{i\geq 1}\E((B_i-B_{i-1})B_i^{-(1-p)}).
\end{align*}
By the concavity of the map $x\mapsto x^p$, any $0 < a \leq b$ satisfy
$$(b-a)b^{p-1} \leq p^{-1}(b^p - a^p).$$
Therefore,
$$\E(AB^{-(1-p)}) \leq p^{-1}\sum_{i\geq 1}\E(B_i^p-B_{i-1}^p) \leq p^{-1}\E(B^p-\eps^p).$$
We conclude that 
$$\E A^p\leq (p^{-1}\E B^p - p^{-1}\eps^p)^p(\E B^p)^{1-p}.$$
The result now follows by letting $\eps\downarrow 0$.
\end{proof}
To formulate our Bichteler-Jacod inequalities we consider for $1\leq p,q<\infty$
the norms
$$\|\phi \|_{\cD_{q,X}^p} = \|\phi \|_{L^p(\Om;L^q(\R_+\times J;X))} =
\Big(\E\Big(\int_{\R_+\ti J} \|\phi \|_X^q \dd t\ti \!\dd\nu\Big)^{\frac{p}{q}}\Big)^{\frac{1}{p}}.$$
\begin{theorem}[Upper bounds and non-trivial martingale type]
\label{thm:typesBanach} Let $X$ be a martingale type $s$ Banach space. If $1<s\leq
p<\infty$, then for any simple, adapted $X$-valued process $\phi$ and any $B \in
\cJ$,
\begin{align}
\label{eqn:typesBanachMax}
\Big(\E\sup_{t>0}\Big\|\int_{(0,t]\ti B} \phi  \
\dd\tilde{N}\Big\|_{X}^p\Big)^{\frac{1}{p}} \lesssim_{p,s,X} \|\phi \one_{B}\|_{\cD_{s,X}^p\cap \cD_{p,X}^p}.
\end{align}
On the other hand, if $1<p\leq s$ then
\begin{align}
\label{eqn:typesBanach1ps}
\Big(\E\sup_{t>0}\Big\|\int_{(0,t]\ti B} \phi  \
\dd\tilde{N}\Big\|_{X}^p\Big)^{\frac{1}{p}} \lesssim_{p,s,X} \|\phi \one_{B}\|_{\cD_{s,X}^p+\cD_{p,X}^p}.
\end{align}
\end{theorem}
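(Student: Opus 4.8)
The plan is to reduce both inequalities to a moment bound for the endpoint $I(\phi\one_B)$, and then to exploit the martingale difference structure of the integral together with Pisier's extrapolation principle (Theorem~\ref{thm:PisierMtype}). Since $t\mapsto I_{t,B}(\phi)$ is an $X$-valued martingale, $t\mapsto\|I_{t,B}(\phi)\|_X$ is a non-negative submartingale, so Doob's $L^p$-inequality ($p>1$) gives $\E\sup_{t>0}\|I_{t,B}(\phi)\|_X^p\lesssim_p\E\|I(\phi\one_B)\|_X^p$, and it suffices to bound $\|I(\phi\one_B)\|_{L^p(\Om;X)}$; absorbing $\one_B$ into the sets $A_j$ I may assume $B=J$. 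Writing $\xi_{ij}=\sum_k F_{ijk}x_{ijk}$ and $\tilde N_{ij}=\tilde N((t_i,t_{i+1}]\ti A_j)$, one has $I(\phi)=\sum_{ij}\xi_{ij}\tilde N_{ij}$, and ordering the pairs $(i,j)$ lexicographically the summands $d_{ij}:=\xi_{ij}\tilde N_{ij}$ form a martingale difference sequence for the filtration $\cG_{(i,j)}=\cF_{t_i}\vee\sigma(\tilde N_{i,j'}:j'<j)$: indeed $\xi_{ij}$ is $\cF_{t_i}$-measurable while $\tilde N_{ij}$ is centred and independent of $\cF_{t_i}$ and of the increments over the disjoint sets $A_{j'}$, $j'<j$. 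By Remark~\ref{rem:assumptionSimpleProcess} and Lemma~\ref{lem:PoissonMoments}, $\E|\tilde N_{ij}|^r\eqsim_r\lambda_{ij}:=(t_{i+1}-t_i)\nu(A_j)\le1$.

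For the case $1<s\le p$ I would apply the martingale type $s$ inequality at exponent $p$ (Theorem~\ref{thm:PisierMtype}) to $(d_{ij})$ to obtain $\|I(\phi)\|_{L^p(\Om;X)}\lesssim_{p,s,X}(\E(\sum_{ij}\|\xi_{ij}\|_X^s|\tilde N_{ij}|^s)^{p/s})^{1/p}$, and then Lemma~\ref{lem:littleS} (legitimate since $0<s\le p$) to the adapted non-negative sequence $\|d_{ij}\|=\|\xi_{ij}\|_X|\tilde N_{ij}|$. By adaptedness and independence, $\E_{(ij)-1}(\|\xi_{ij}\|_X^s|\tilde N_{ij}|^s)=\|\xi_{ij}\|_X^s\,\E|\tilde N_{ij}|^s$ and $\E\|d_{ij}\|^p=\E\|\xi_{ij}\|_X^p\,\E|\tilde N_{ij}|^p$, so Lemma~\ref{lem:PoissonMoments} bounds the two terms appearing in Lemma~\ref{lem:littleS} by $\|\phi\|_{\cD_{p,X}^p}$ and $\|\phi\|_{\cD_{s,X}^p}$ respectively. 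Their maximum is comparable to $\|\phi\|_{\cD_{s,X}^p\cap\cD_{p,X}^p}$, which yields \eqref{eqn:typesBanachMax}.

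The case $1<p\le s$ is the delicate one, because the sum norm is genuinely smaller than either individual norm, so one application of the type inequality cannot suffice; instead I will prove two estimates valid for \emph{every} simple adapted process $\psi$ and then decompose. Estimate (B), $\|I(\psi)\|_{L^p}\lesssim_{p,s,X}\|\psi\|_{\cD_{p,X}^p}$, follows from the martingale type $p$ property of $X$ (inherited from type $s$ since $p\le s$) applied at exponent $p$, together with $\E|\tilde N_{ij}|^p\lesssim\lambda_{ij}$. Estimate (A), $\|I(\psi)\|_{L^p}\lesssim_{p,s,X}\|\psi\|_{\cD_{s,X}^p}$, is where the real work lies: Pisier's inequality again bounds $\|I(\psi)\|_{L^p}$ by $(\E(\sum_{ij}\|\xi_{ij}\|_X^s|\tilde N_{ij}|^s)^{p/s})^{1/p}$, and I split $|\tilde N_{ij}|^s=|\tilde N_{ij}|^s\one_{\{N_{ij}=0\}}+|\tilde N_{ij}|^s\one_{\{N_{ij}\ge1\}}$. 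The first part is pointwise $\le\sum_{ij}\|\xi_{ij}\|_X^s\lambda_{ij}=\|\psi\|_{L^s(\R_+\ti J;X)}^s$; to the second part I apply the reverse dual Doob inequality (Lemma~\ref{lem:dualDoobRev}) with exponent $p/s\le1$, which replaces each $|\tilde N_{ij}|^s\one_{\{N_{ij}\ge1\}}$ by its conditional mean $\E|\tilde N_{ij}|^s\one_{\{N_{ij}\ge1\}}\lesssim\lambda_{ij}$, so that after taking the outer $L^{p/s}(\Om)$-norm both parts are $\lesssim\|\psi\|_{\cD_{s,X}^p}$.

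With (A) and (B) in hand, for any decomposition $\phi=\psi_1+\psi_2$ into simple adapted processes the triangle inequality gives $\|I(\phi)\|_{L^p}\le\|I(\psi_1)\|_{L^p}+\|I(\psi_2)\|_{L^p}\lesssim\|\psi_1\|_{\cD_{s,X}^p}+\|\psi_2\|_{\cD_{p,X}^p}$, and taking the infimum produces \eqref{eqn:typesBanach1ps} — \emph{provided} the infimum over adapted decompositions agrees up to a constant with the defining infimum over arbitrary decompositions. This is precisely the role of Lemma~\ref{lem:condIneqX}: the conditional-expectation projection is a linear map that is bounded (uniformly) on both $\cD_{s,X}^p$ and $\cD_{p,X}^p$ and fixes the already adapted $\phi$, so it turns an arbitrary near-optimal decomposition into an adapted one of comparable cost. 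The main obstacle is estimate (A): the naive argument only yields $\|I(\psi)\|_{L^p}\lesssim\|\psi\|_{\cD_{s,X}^p}+\|\psi\|_{\cD_{p,X}^p}$ (an intersection-type bound), and it is exactly the reverse dual Doob inequality, using $p/s\le1$, that controls the jump part by the $\cD_{s,X}^p$-norm \emph{alone}, making (A) and (B) both globally valid so that the infimum over decompositions collapses to the genuine sum norm.
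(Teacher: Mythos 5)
Your proof is correct and takes essentially the same route as the paper: Doob's maximal inequality, the lexicographically ordered martingale difference sequence $(\tilde N_{ij}\xi_{ij})$, Pisier's extrapolation (Theorem~\ref{thm:PisierMtype}) combined with Lemma~\ref{lem:littleS} and Lemma~\ref{lem:PoissonMoments} when $s\le p$, and, when $p\le s$, the reverse dual Doob inequality (Lemma~\ref{lem:dualDoobRev}) for the $\cD_{s,X}^p$-bound together with Lemma~\ref{lem:condIneqX} (preceded by projection onto the partition algebra $\cA$) to turn a near-optimal arbitrary decomposition into an adapted one of comparable cost. The only differences are cosmetic: your splitting of $|\tilde N_{ij}|^s$ over $\{N_{ij}=0\}$ and $\{N_{ij}\ge 1\}$ is unnecessary, since Lemma~\ref{lem:dualDoobRev} applies directly to $\sum_{i,j}\|\xi_{ij}\|_X^s|\tilde N_{ij}|^s$ and $\E_{(i,j)-1}\big(\|\xi_{ij}\|_X^s|\tilde N_{ij}|^s\big)\eqsim_s (t_{i+1}-t_i)\nu(A_j)\|\xi_{ij}\|_X^s$ already gives the $\cD_{s,X}^p$-norm, and your proving the two one-sided estimates (A), (B) before decomposing is just a reordering of the paper's decompose-then-estimate argument.
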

\begin{proof}
Clearly, the process
$$\Big(\Big\|\int_{(0,t]\ti B} \phi  \dd\tilde{N}\Big\|_{X}\Big)_{t>0}$$
is a submartingale and therefore, by Doob's maximal inequality,
\begin{equation}
\label{eqn:DoobMaxInt}
\Big(\E\sup_{t>0}\Big\|\int_{(0,t]\ti B} \phi
\dd\tilde{N}\Big\|_{X}^p\Big)^{\frac{1}{p}} \leq
\frac{p}{p-1}\Big(\E\Big\|\int_{\R_+\ti B} \phi
\dd\tilde{N}\Big\|_{X}^p\Big)^{\frac{1}{p}}.
\end{equation}
Let $\phi $ be as in \eqref{eqn:simpleBanach}, taking
Remark~\ref{rem:assumptionSimpleProcess} into account. Without loss of
generality, we may assume that $B=J$. We write $\tilde{N}_{ij} :=
\tilde{N}((t_i,t_{i+1}]\ti A_j)$ for brevity. The sub-algebras
\begin{equation}
\label{eqn:filtLexico}
\cF_{(i,j)} := \si\Big(\cF_{t_i}, \ \tilde{N}((t_i,t_{i+1}]\ti A_k), \
k=1,\ldots,j\Big) \ \ \ \ (i=1,\ldots,l, \ j=1,\ldots,m)
\end{equation}
form a filtration if we equip the pairs $(i,j)$ with the lexicographic ordering.
We shall use $(i,j)-1$ to denote the element preceding $(i,j)$ in this ordering.
If we define $y_{ij} = \sum_k
F_{ijk} \oto x_{ijk}$, then $(d_{ij}) := (\tilde{N}_{ij}y_{ij})$ is a
martingale difference sequence with respect to the filtration
$(\cF_{(i,j)})_{i,j}$ and
$$\int_{\R_+\ti J} \phi \dd\tilde{N} = \sum_{i,j} d_{ij}.$$
Suppose first that $s\leq p<\infty$. By Theorem~\ref{thm:PisierMtype} and
Lemma~\ref{lem:littleS} we find
\begin{align*}
\Big(\E\Big\|\sum_{i,j}d_{ij}\Big\|_X^p\Big)^{\frac{1}{p}} & \lesssim_{p,s,X}
\Big(\E\Big(\sum_{i,j}\|d_{ij}\|_X^s\Big)^{\frac{p}{s}}\Big)^{\frac{1}{p}} \\
& \lesssim_{p,s} \max\Big\{\Big(\sum_{i,j}
\E\|d_{ij}\|_X^p\Big)^{\frac{1}{p}}, \Big(\E\Big(\sum_{i,j}
\E_{(i,j)-1}\|d_{ij}\|_X^s\Big)^{\frac{p}{s}}\Big)^{\frac{1}{p}}\Big\}.
\end{align*}
By Lemma~\ref{lem:PoissonMoments},
\begin{align*}
\Big(\E\Big(\sum_{i,j}
\E_{(i,j)-1}\|d_{ij}\|_X^s\Big)^{\frac{p}{s}}\Big)^{\frac{1}{p}}
& = \Big(\E\Big(\sum_{i,j}
\E|\tilde{N}_{ij}|^s\|y_{ij}\|_X^s\Big)^{\frac{p}{s}}\Big)^{\frac{1}{p}}
\nonumber \\
& \eqsim_s \Big(\E\Big(\sum_{i,j}
(t_{i+1}-t_i)\nu(A_j)\|y_{ij}\|_X^s\Big)^{\frac{p}{s}}\Big)^{\frac{1}{p}}
\nonumber \\
& = \|\phi \|_{\cD_{s,X}^p}.
\end{align*}
Similarly,
$$\Big(\sum_{i,j} \E\|d_{ij}\|_X^p\Big)^{\frac{1}{p}} \eqsim_p
\|\phi \|_{\cD_{p,X}^p}.$$
Putting our estimates together we find
\begin{equation*}
\Big(\E\Big\|\int_{\R_+\ti J} \phi  \dd\tilde{N}\Big\|_{X}^p\Big)^{\frac{1}{p}}
\lesssim_{p,s,X} \max\Big\{\|\phi \|_{\cD_{s,X}^p}, \|\phi \|_{\cD_{p,X}^p}\Big\}.
\end{equation*}
The result in the case $s\leq p<\infty$ now follows from
\eqref{eqn:DoobMaxInt}.\par
Suppose next that $1<p<s$. Fix $\eps>0$. Since $L^{\infty}(\Om)\ot
L^{\infty}(\R_+) \ot L^{\infty}(J) \ot X$ is dense in both $\cD_{p,X}^p$ and
$\cD_{s,X}^p$, we can find $\phi _1,\phi _2$ in $L^{\infty}(\Om)\ot L^{\infty}(\R_+)
\ot L^{\infty}(J) \ot X$ such that $\phi  = \phi _1 + \phi _2$ and
$$\|\phi _1\|_{\cD_{p,X}^p} + \|\phi _2\|_{\cD_{s,X}^p} \leq \|\phi \|_{\cD_{p,X}^p + \cD_{s,X}^p} + \eps.$$
Let $\cA$ be the sub-$\si$-algebra of $\cB(\R_+)\ti\cJ$ generated by the
sets $(t_i,t_{i+1}]\ti A_j$, $i=1,\ldots,l$, $j=1,\ldots,m$. Then $\phi  = \E(\phi _1|\cA) + \E(\phi _2|\cA)$ and
$\E(\phi _1|\cA),\E(\phi _2|\cA)$ are of the form
\begin{equation*}
\E(\phi_{\al}|\cA) = \sum_{i,j,k} F_{ijk,\al} \oto \one_{(t_i,t_{i+1}]} \oto \one_{A_j}
\oto x_{ijk,\al} \qquad (\al = 1,2).
\end{equation*}
Let $\tilde{\phi }_1,\tilde{\phi }_2$ be the associated simple adapted processes
\begin{equation*}
\tilde{\phi}_{\al} = \sum_{i,j,k} \E(F_{ijk,\al}|\cF_{t_i}) \oto
\one_{(t_i,t_{i+1}]} \oto \one_{A_j} \oto x_{ijk,\al} \qquad (\al=1,2),
\end{equation*}
then $\phi =\tilde{\phi }_1 + \tilde{\phi }_2$. Therefore,
\begin{equation}
\label{eqn:intSplit}
\Big\|\int_{\R_+\ti J} \phi  \dd\tilde{N}\Big\|_{L^p(\Om;X)} \leq
\Big\|\int_{\R_+\ti J} \tilde{\phi }_1 \dd\tilde{N}\Big\|_{L^p(\Om;X)} +
\Big\|\int_{\R_+\ti J} \tilde{\phi }_2 \dd\tilde{N}\Big\|_{L^p(\Om;X)}.
\end{equation}
For $\al=1,2$ write $y_{ij,\al} = \sum_k F_{ijk,\al} \oto x_{ijk}$ and let
$(d_{ij,\al})_{i,j}$ denote the martingale difference sequence $(\tilde{N}_{ij}y_{ij,\al})_{i,j}$. We apply
Theorem~\ref{thm:PisierMtype}, Jensen's inequality and Lemma~\ref{lem:PoissonMoments} to the first
term on the right hand side of \eqref{eqn:intSplit} and find
\begin{align*}
\Big\|\int_{\R_+\ti J} \tilde{\phi }_1 \dd\tilde{N}\Big\|_{L^p(\Om;X)} &
\lesssim_{p,s,X}
\Big(\E\Big(\sum_{i,j}\|d_{ij,1}\|_X^s\Big)^{\frac{p}{s}}\Big)^{\frac{1}{p}} \\
& \leq \Big(\sum_{i,j}\E\|d_{ij,1}\|_X^p\Big)^{\frac{1}{p}} \\
& = \Big(\sum_{i,j}\E|\tilde{N}_{ij}|^p \E\|y_{ij,1}\|_X^p\Big)^{\frac{1}{p}}
\\
& \eqsim_{p} \Big(\sum_{i,j}(t_{i+1}-t_i)\nu(A_j)
\E\|y_{ij,1}\|_X^p\Big)^{\frac{1}{p}} = \|\tilde{\phi }_1\|_{\cD_{p,X}^p}.
\end{align*}
Since vector-valued conditional expectations are contractive,
$$\|\tilde{\phi }_1\|_{\cD_{p,X}^p} \leq \|\E(\phi _1|\cA)\|_{\cD_{p,X}^p} \leq
\|\phi _1\|_{\cD_{p,X}^p}.$$
To the second term on the right hand side of \eqref{eqn:intSplit} we apply
Theorem~\ref{thm:PisierMtype}, Lemma \ref{lem:dualDoobRev} and
Lemma \ref{lem:PoissonMoments} to find
\begin{align*}
\Big\|\int_{\R_+\ti J} \tilde{\phi }_2 \dd\tilde{N}\Big\|_{L^p(\Om;X)} &
\lesssim_{p,s,X}
\Big(\E\Big(\sum_{i,j}\|d_{ij,2}\|_X^s\Big)^{\frac{p}{s}}\Big)^{\frac{1}{p}} \\
& \lesssim_{p,s}
\Big(\E\Big(\sum_{i,j}\E_{(i,j)-1}\|d_{ij,2}\|_X^s\Big)^{\frac{p}{s}}\Big)^{\frac{1
}{p}} \\
& = \Big(\E\Big(\sum_{i,j}\E|\tilde{N}_{ij}|^s
\|y_{ij,2}\|_X^s\Big)^{\frac{p}{s}}\Big)^{\frac{1}{p}} \\
& \eqsim_{s} \Big(\E\Big(\sum_{i,j}(t_{i+1}-t_i)\nu(A_j)
\|y_{ij,2}\|_X^s\Big)^{\frac{p}{s}}\Big)^{\frac{1}{p}} =
\|\tilde{\phi }_2\|_{\cD_{s,X}^p}.
\end{align*}
By Lemma~\ref{lem:condIneqX},
$$\|\tilde{\phi }_2\|_{\cD_{s,X}^p} \lesssim_{p,s,X} \|\E(\phi _2|\cA)\|_{\cD_{s,X}^p}
\leq \|\phi _2\|_{\cD_{s,X}^p}.$$
It follows that
$$\Big\|\int_{\R_+\ti J} \phi  \dd\tilde{N}\Big\|_{L^p(\Om;X)} \lesssim_{p,s,X}
\|\phi _1\|_{\cD_{p,X}^p} + \|\phi _2\|_{\cD_{s,X}^p} \leq \|\phi \|_{\cD_{p,X}^p+\cD_{s,X}^p}
+ \eps.$$
Since $\eps>0$ was arbitrary, we conclude in view of \eqref{eqn:DoobMaxInt} that
\eqref{eqn:typesBanach1ps} holds.
\end{proof}
Theorem~\ref{thm:typesBanach} extends several known vector-valued
Bichteler-Jacod inequalities in the literature. An estimate for $X$ equal to a
Hilbert space $H$ and $2\leq p<\infty$ was obtained in \cite[Lemma 3.1]{MPR10} (see also \cite{Kun04} for an earlier version of this result for $H=\R^n$). The
estimate in \eqref{eqn:typesBanachMax} is slightly stronger in this case. In
fact, we will see in Corollary~\ref{cor:Hilbert} below that it cannot be
improved. In \cite[Lemma 4]{MaR10} a slightly weaker inequality than
\eqref{eqn:typesBanachMax} was obtained in the special case $X=L^s$, $p=s\geq
2$. Note however, that the estimate in \eqref{eqn:typesBanachMax} is still
suboptimal in this case, see Theorem~\ref{thm:summarySILqPoisson} below. In
\cite{Hau11} Hausenblas proved \eqref{eqn:typesBanachMax} in the special case
$p=s^n$ for some integer $n\ge 1$.
Finally, Theorem~\ref{thm:typesBanach} has been obtained independently by Zhu
using a different approach (see the work in progress \cite{Zhu11}).\par
We now consider lower bounds for stochastic integrals of the form
\eqref{eqn:B-JRev}, based on the martingale cotype of the space.

\begin{lemma}
\label{lem:normingIntersection}
Let $X$ be a reflexive Banach space, let $(\Om,\cF,\bP)$ be a probability space and let
$(S,\Sigma,\mu)$ be a $\si$-finite measure space. Suppose $1<p,p',s,s'<\infty$
satisfy $\frac{1}{p} + \frac{1}{p'} = 1$ and $\frac{1}{s} + \frac{1}{s'} = 1$.
Then
$$
(L^{p}(\Om;L^{p}(S;X))+ L^{p}(\Om;L^{s}(S;X)))^*=
L^{p'}(\Om;L^{p'}(S;X^*))\cap L^{p'}(\Om;L^{s'}(S;X^*))$$
isometrically. The corresponding
duality bracket is given by
\begin{equation*}
\langle f,g\rangle = \E\int_S \langle f(s),g(s)\rangle \ d\mu(s),
\end{equation*}
for any $f \in L^{p}(\Om;L^{p}(S;X))+ L^{p}(\Om;L^{s}(S;X))$ and $g \in
L^{p'}(\Om;L^{p'}(S;X^*))\cap L^{p'}(\Om;L^{s'}(S;X^*))$.
\end{lemma}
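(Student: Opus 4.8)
The plan is to recognize the claimed identity as the duality between the sum and the intersection of a compatible Banach couple, combined with the duality of Bochner spaces. Set $A_0 = L^p(\Om;L^p(S;X))$ and $A_1 = L^p(\Om;L^s(S;X))$. Both are continuously embedded in the space $L^0(\Om\ti S;X)$ of (equivalence classes of) strongly measurable $X$-valued functions on the $\si$-finite space $(\Om\ti S,\cF\ti\Si,\bP\ti\mu)$, equipped with the topology of convergence in measure on sets of finite measure; thus $(A_0,A_1)$ is a compatible couple, with $A_0+A_1$ carrying the usual infimum-of-sums norm. The simple functions (finitely valued, supported on sets of finite measure) lie in $A_0\cap A_1$ and are dense in each of $A_0$ and $A_1$, which is precisely the hypothesis needed to invoke the standard duality theorem for Banach couples: under this density one has, isometrically,
$$(A_0+A_1)^* = A_0^* \cap A_1^*,$$
the right-hand side normed by the maximum of the two dual norms, with duality bracket extending the natural pairings on $A_0$ and $A_1$.

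It therefore remains to identify the two component duals. First I would compute the inner duals: since $X$ is reflexive, $X^*$ is reflexive and hence has the Radon--Nikodym property, so that $(L^s(S;X))^* = L^{s'}(S;X^*)$ and $(L^p(S;X))^* = L^{p'}(S;X^*)$ isometrically. Next, because $X^*$ is reflexive and $1<p',s'<\infty$, the inner duals $L^{p'}(S;X^*)$ and $L^{s'}(S;X^*)$ are themselves reflexive, in particular they have the Radon--Nikodym property; a second application of Bochner duality, now over $\Om$, yields
$$A_0^* = L^{p'}(\Om;L^{p'}(S;X^*)), \qquad A_1^* = L^{p'}(\Om;L^{s'}(S;X^*)),$$
again isometrically. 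Substituting these into the couple-duality identity gives exactly the asserted equality of Banach spaces.

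Finally I would check that the abstract duality bracket is the stated integral pairing. For $g$ in the intersection and any decomposition $f = f_0 + f_1$ with $f_0 \in A_0$ and $f_1 \in A_1$, the bracket equals $\langle f_0,g\rangle_{A_0} + \langle f_1,g\rangle_{A_1}$; under the identifications above each term is the Bochner integral $\Ex\int_S \langle f_\al(s),g(s)\rangle\dd\mu(s)$, so the sum is $\Ex\int_S \langle f(s),g(s)\rangle\dd\mu(s)$, independent of the chosen decomposition because the two representing functions for $g$ coincide $\bP\ti\mu$-almost everywhere (they agree on the dense subspace of simple functions). Finiteness for all such $f,g$ follows from H\"older's inequality applied in each component.

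The main obstacle is the identification of the component duals $A_i^*$: one must justify the two nested applications of vector-valued duality, and the only nontrivial input is the Radon--Nikodym property, which is where reflexivity of $X$ is genuinely used, passing first to $X^*$ and then to the reflexive inner spaces $L^{s'}(S;X^*)$ and $L^{p'}(S;X^*)$. By contrast, verifying that $(A_0,A_1)$ is a compatible couple with dense intersection, and that the norms match isometrically (the infimum norm on the sum dualizing to the maximum norm on the intersection), is routine.
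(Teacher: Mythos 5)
Your proposal is correct and follows essentially the same route as the paper: identify the component duals $(L^q(\Om;L^r(S;X)))^* = L^{q'}(\Om;L^{r'}(S;X^*))$ using reflexivity of $X$, then invoke the standard duality theorem for compatible couples with dense intersection, $(Y_0+Y_1)^* = Y_0^*\cap Y_1^*$ isometrically (the paper cites Aronszajn--Gagliardo for this). The only difference is that you spell out the details the paper leaves implicit, namely the Radon--Nikod\'ym property argument behind the two nested Bochner dualities and the verification of the density and compatibility hypotheses.
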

\begin{proof}
The reflexivity of $X$ implies that for all $1<q,r<\infty$ we have
$$ (L^q(\Omega;L^r(S;X)))^* =  L^{q'}(\Omega;L^{r'}(S;X^*)).$$
The lemma now follows from the general fact
from interpolation theory (see \cite[Theorem 8.III]{AroGag}) that if $(Y_0, Y_1)$ is
an interpolation couple with $Y_0\cap Y_1$ dense in both $Y_0$ and $Y_1$, then
$(Y_0^*,Y_1^*)$ is an interpolation couple and
$(Y_0+Y_1)^* = Y_0^*\cap Y_1^*$ isometrically.
\end{proof}

In order to apply this lemma we recall (see \cite{Pis75}) that any Banach space with non-trivial martingale type
or finite martingale cotype is uniformly convex and therefore reflexive. In fact we will need only that
$L^{p'}(\Om;L^{p'}(S;X^*))\cap L^{p'}(\Om;L^{s'}(S;X^*))$ is norming for
$L^{p}(\Om;L^{p}(S;X))+ L^{p}(\Om;L^{s}(S;X))$; this is true for arbitrary Banach spaces $X$ and can
be proved with elementary means.

\begin{theorem}[Lower bounds and finite martingale cotype]
\label{thm:cotypesBanach} Let $X$ be a Banach space with martingale cotype
$2\leq s<\infty$. If $s\leq p<\infty$ we have, for any simple, adapted
$X$-valued process $\phi $
and any $B \in \cJ$,
\begin{equation*}
\|\phi \one_{
B}\|_{\cD_{s,X}^p\cap \cD_{p,X}^p} \lesssim_{p,s,X}
\Big(\E\Big\|\int_{\R_+\ti B} \phi  \dd\tilde{N}\Big\|_X^p\Big)^{\frac{1}{p}}.
\end{equation*}
On the other hand, if $1<p<s$ then
\begin{equation}
\label{eqn:cotypesBanach1ps} \|\phi \one_{
B}\|_{\cD_{s,X}^p+\cD_{p,X}^p}
\lesssim_{p,s,X} \Big(\E\Big\|\int_{\R_+\ti B} \phi
\dd\tilde{N}\Big\|_X^p\Big)^{\frac{1}{p}}.
\end{equation}
\end{theorem}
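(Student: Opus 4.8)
The plan is to follow the structure of Theorem~\ref{thm:typesBanach}, establishing the reverse inequalities in the two ranges $s\le p$ and $p<s$ separately. In both cases I assume $B=J$ (replacing $\phi$ by $\phi\one_B$), write $\phi$ as in \eqref{eqn:simpleBanach} with $y_{ij}=\sum_k F_{ijk}x_{ijk}$, $\tilde N_{ij}=\tilde N((t_i,t_{i+1}]\ti A_j)$ and $d_{ij}=\tilde N_{ij}y_{ij}$, so that $(d_{ij})$ is a martingale difference sequence for the lexicographic filtration \eqref{eqn:filtLexico} and $\int_{\R_+\ti J}\phi\dd\tilde N=\sum_{i,j}d_{ij}$. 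For the range $s\le p<\infty$ I would argue directly. The martingale cotype part of Theorem~\ref{thm:PisierMtype} gives
\[
\Big(\E\Big(\sum_{i,j}\|d_{ij}\|_X^s\Big)^{p/s}\Big)^{1/p}\lesssim_{p,s,X}\Big(\E\Big\|\sum_{i,j}d_{ij}\Big\|_X^p\Big)^{1/p},
\]
and since $s\le p$ the two-sided bound of Lemma~\ref{lem:littleS} equates the left-hand side with the maximum of $\big(\sum_{i,j}\E\|d_{ij}\|_X^p\big)^{1/p}$ and $\big(\E(\sum_{i,j}\E_{(i,j)-1}\|d_{ij}\|_X^s)^{p/s}\big)^{1/p}$. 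Using the independence of $\tilde N_{ij}$ from $\cF_{t_i}$ together with Lemma~\ref{lem:PoissonMoments} exactly as in Theorem~\ref{thm:typesBanach}, the first quantity is $\eqsim_p\|\phi\|_{\cD_{p,X}^p}$ and the second is $\eqsim_s\|\phi\|_{\cD_{s,X}^p}$; as the intersection norm is the maximum of the two, this settles the case $s\le p$.

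The range $1<p<s$ is the harder case, and I would treat it by duality. Since $X$ has martingale cotype $s\in[2,\infty)$ it is reflexive and, by the duality between $s$-uniform convexity and $s'$-uniform smoothness \cite{Pis75}, its dual $X^*$ has martingale type $s'\in(1,2]$; moreover $p<s$ forces $s'<p'<\infty$. By Lemma~\ref{lem:normingIntersection} the space $L^{p'}(\Om;L^{s'}(\R_+\ti J;X^*))\cap L^{p'}(\Om;L^{p'}(\R_+\ti J;X^*))$ is (isometrically) norming for $\cD_{s,X}^p+\cD_{p,X}^p$, so it suffices to estimate $|\langle\phi,g\rangle|$, with $\langle\phi,g\rangle=\E\int_{\R_+\ti J}\langle\phi,g\rangle\dd t\dd\nu$, for every $g$ in the unit ball of this intersection space. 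Because $\phi$ is supported on the grid generated by the sets $(t_i,t_{i+1}]\ti A_j$, I may first replace $g$ by its conditional expectation onto the $\si$-algebra $\cA$ of that grid, which is contractive in the intersection norm and leaves $\langle\phi,g\rangle$ unchanged. As $\phi$ is adapted and each $y_{ij}$ is $\cF_{t_i}$-measurable, I may then pass to the associated adapted simple $X^*$-valued process $\tilde g=\sum_{i,j,k}\E(G_{ijk}|\cF_{t_i})\one_{(t_i,t_{i+1}]}\one_{A_j}x^*_{ijk}$ without altering the pairing, i.e. $\langle\phi,g\rangle=\langle\phi,\tilde g\rangle$.

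The crux is the It\^o-type pairing identity
\[
\E\big\langle I(\phi),I(\tilde g)\big\rangle=\E\int_{\R_+\ti J}\langle\phi,\tilde g\rangle\dd t\dd\nu,
\]
which I would verify by expanding both sides over the grid: the off-diagonal terms vanish because each $\tilde N_{i'j'}$ has mean zero and is independent of the data indexed by earlier pairs, while the diagonal terms share the common weight $(t_{i+1}-t_i)\nu(A_j)=\E|\tilde N_{ij}|^2$. Granting this, H\"older's inequality yields $|\langle\phi,\tilde g\rangle|\le(\E\|I(\phi)\|_X^p)^{1/p}(\E\|I(\tilde g)\|_{X^*}^{p'})^{1/p'}$. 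Since $X^*$ has martingale type $s'$ and $s'\le p'$, the upper bound \eqref{eqn:typesBanachMax} of Theorem~\ref{thm:typesBanach} applied to $X^*$ controls the second factor by $\|\tilde g\|_{\cD_{s',X^*}^{p'}\cap\cD_{p',X^*}^{p'}}$, and Lemma~\ref{lem:condIneqX} (applied with the exponent pairs $(p',s')$ and $(p',p')$) bounds this in turn by $\|g\|_{\cD_{s',X^*}^{p'}\cap\cD_{p',X^*}^{p'}}\le1$. Taking the supremum over $g$ then gives \eqref{eqn:cotypesBanach1ps}.

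The main obstacle I anticipate lies entirely in the duality step for $1<p<s$: establishing the pairing identity with the correct normalisation, so that the deterministic duality against $\dd t\dd\nu$ matches the stochastic second-moment weighting $\E|\tilde N_{ij}|^2$, and verifying that the two successive conditional-expectation reductions of $g$ are simultaneously pairing-preserving against the adapted integrand $\phi$ and contractive in the relevant intersection norm. By contrast, the case $s\le p$ is a direct cotype analogue of the martingale computation already carried out in the proof of Theorem~\ref{thm:typesBanach}.
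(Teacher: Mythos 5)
Your proposal is correct and follows essentially the same route as the paper: the case $s\le p$ via the cotype half of Theorem~\ref{thm:PisierMtype} combined with the two-sided Lemma~\ref{lem:littleS} and Lemma~\ref{lem:PoissonMoments}, and the case $1<p<s$ by duality, using Lemma~\ref{lem:normingIntersection}, conditioning onto the grid $\sigma$-algebra $\cA$ and then onto $\cF_{t_i}$, the orthogonality/second-moment identity for the $\tilde N_{ij}$, H\"older, and finally Theorem~\ref{thm:typesBanach} for $X^*$ together with Lemma~\ref{lem:condIneqX}. The pairing identity you single out as the crux is exactly the computation the paper carries out.
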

\begin{proof}
Let $\phi $ be the simple adapted process given in \eqref{eqn:simpleBanach},
taking Remark~\ref{rem:assumptionSimpleProcess} into account. We may assume that $B=J$.
Suppose first that $s\leq p<\infty$. If we define $y_{ij} = \sum_k F_{ijk} \oto x_{ijk}$,
then $(d_{ij}) := (\tilde{N}_{ij}y_{ij})$ is a martingale
difference sequence with respect to the filtration defined in \eqref{eqn:filtLexico} and
$$\int_{\R_+\ti B} \phi  \dd\tilde{N} = \sum_{i,j} d_{ij}.$$
By Theorem~\ref{thm:PisierMtype}, Lemma~\ref{lem:littleS} and Lemma~\ref{lem:PoissonMoments} we find
\begin{align*}
\Big(\E\Big\|\sum_{i,j}d_{ij}\Big\|_X^p\Big)^{\frac{1}{p}} & \gtrsim_{p,s,X}
\Big(\E\Big(\sum_{i,j}\|d_{ij}\|_X^s\Big)^{\frac{p}{s}}\Big)^{\frac{1}{p}} \\
& \eqsim_{p,s,X} \max\Big\{\Big(\sum_{i,j} \E\|d_{ij}\|_X^p\Big)^{\frac{1}{p}}\!,
\Big(\E\Big(\sum_{i,j}
\E_{(i,j)-1}\|d_{ij}\|_X^s\Big)^{\frac{p}{s}}\Big)^{\frac{1}{p}}\Big\} \\
& = \max\Big\{\Big(\sum_{i,j}
\E|\tilde{N}_{ij}|^p\E\|y_{ij}\|_X^p\Big)^{\frac{1}{p}}\!\!,
\Big(\E\Big(\sum_{i,j}
\E|\tilde{N}_{ij}|^s\|y_{ij}\|_X^s\Big)^{\frac{p}{s}}\Big)^{\frac{1}{p}}\Big\}
\\
& \eqsim_{p,s} \max\Big\{\Big(\sum_{i,j}
(t_{i+1}-t_i)\nu(A_j)\E\|y_{ij}\|_X^p\Big)^{\frac{1}{p}}, \\
& \qquad \qquad \qquad \qquad \Big(\E\Big(\sum_{i,j}
(t_{i+1}-t_i)\nu(A_j)\|y_{ij}\|_X^s\Big)^{\frac{p}{s}}\Big)^{\frac{1}{p}}\Big\}
\\
& = \|\phi \|_{\cD_{s,X}^p\cap \cD_{p,X}^p}.
\end{align*}
We deduce the inequality in the case $1<p<s$ by duality from Theorem~\ref{thm:typesBanach}.
By Lemma \ref{lem:normingIntersection}
 $\cD_{p',X^*}^{p'}\cap \cD_{s',X^*}^{p'}$ is norming for $\cD_{s,X}^p+\cD_{p,X}^p$.
We let $\langle
\cdot,\cdot\rangle$ denote the associated duality bracket. Let $\psi $ be an element
of the algebraic tensor product $L^{\infty}(\Om)\ot L^{\infty}(\R_+)\ot
L^{\infty}(J)\ot X^*$. Let $\cA$ be the sub-$\si$-algebra of $\cB(\R_+)\ti\cJ$
generated by the sets $(t_i,t_{i+1}]\ti A_j$. Then $\E(\psi |\cA)$ is of the form
$$\E(\psi |\cA) = \sum_{l,m,n} G_{lmn}\oto \one_{(t_l,t_{l+1}]} \oto \one_{A_m} \oto
x_{lmn}^*,$$
where $G_{lmn} \in L^{\infty}(\Om)$. Let $\tilde{\psi }$ be the associated simple
adapted process defined by
$$\tilde{\psi} = \sum_{l,m,n} \tilde{G}_{lmn}\oto \one_{(t_l,t_{l+1}]} \oto
\one_{A_m} \oto x_{lmn}^*,$$
where $\tilde{G}_{lmn}=\E(G_{lmn}|\cF_{t_l})$. By conditioning,
\begin{align*}
\langle \phi ,\psi \rangle & = \langle \phi ,\E(\psi |\cA)\rangle \\
& = \sum_{i,j,k,n}\E(F_{ijk} G_{ijn}) \dd t\ti \!\dd\nu((t_i,t_{i+1}]\ti A_j) \langle
x_{ijk},x_{ijn}^*\rangle \\
& = \sum_{i,j,k,n}\E(F_{ijk} \tilde{G}_{ijn}) \dd t\ti \!\dd\nu((t_i,t_{i+1}]\ti A_j)
\langle x_{ijk},x_{ijn}^*\rangle
\end{align*}
Observe now that
$$
\E (F_{ijk}\tilde{G}_{lmn}\tilde{N}_{ij}\tilde{N}_{lm}) = \left\{
  \begin{array}{ll}
   \!\!\E(F_{ijk}\tilde{G}_{ijn})\dd t\!\ti\! \!\dd\nu((t_i,t_{i+1}]\ti A_j), & \mathrm{\!if \ }
i=l \ \& \  j=m; \\
   \!\! 0, & \mathrm{\!otherwise.}
  \end{array}
\right.
$$
Therefore,
\begin{align*}
\langle \phi ,\psi \rangle & = \sum_{i,j,k,l,m,n} \E(F_{ijk}
\tilde{G}_{lmn}\tilde{N}_{ij}\tilde{N}_{lm}) \langle
x_{ijk},x_{lmn}^*\rangle \\
& = \E\Big(\Big\langle \sum_{i,j,k} F_{ijk} \tilde{N}_{ij} x_{ijk},
\sum_{l,m,n}\tilde{G}_{lmn} \tilde{N}_{lm} x_{lmn}^*\Big\rangle\Big) \\
& \leq \Big\|\sum_{i,j,k} F_{ijk} \tilde{N}_{ij} x_{ijk}\Big\|_{L^p(\Om;X)}
\Big\|\sum_{l,m,n}\tilde{G}_{lmn} \tilde{N}_{lm}
x_{lmn}^*\Big\|_{L^{p'}(\Om;X^*)} \\
& = \Big\|\int_{\R_+\ti J} \phi  \dd\tilde{N}\Big\|_{L^p(\Om;X)} \Big\|\int_{\R_+\ti
J} \tilde{\psi } \dd\tilde{N}\Big\|_{L^{p'}(\Om;X^*)}
\end{align*}
Since $X$ has martingale cotype $1<p<s$, $X^*$ has martingale type $1<s'\leq 2$
and $s'\leq p'$. Therefore, we can subsequently apply
Theorem~\ref{thm:typesBanach} and Lemma~\ref{lem:condIneqX} to obtain
\begin{align*}
\Big\|\int_{\R_+\ti J} \tilde{\psi } \dd\tilde{N}\Big\|_{L^{p'}(\Om;X^*)} &
\lesssim_{p,s,X} \|\tilde{\psi }\|_{\cD_{p',X^*}^{p'}\cap \cD_{s',X^*}^{p'}} \\
& \lesssim_{p,s} \|\E(\psi |\cA)\|_{\cD_{p',X^*}^{p'}\cap \cD_{s',X^*}^{p'}} \leq
\|\psi \|_{\cD_{p',X^*}^{p'}\cap \cD_{s',X^*}^{p'}}.
\end{align*}
Summarizing, we find
$$\langle \phi ,\psi \rangle \lesssim_{p,s,X} \Big\|\int_{\R_+\ti J} \phi
\dd\tilde{N}\Big\|_{L^p(\Om;X)}\|\psi \|_{\cD_{p',X^*}^{p'}\cap \cD_{s',X^*}^{p'}}.$$
By taking the supremum over all $\psi $ as above, we conclude that
\eqref{eqn:cotypesBanach1ps} holds.
\end{proof}
We obtain two-sided estimates for the $L^p$-norm of the stochastic integral in the special
case where $X$ has both martingale type and cotype equal to $2$. By Kwapie\'{n}'s
theorem (see e.g.\ \cite[Theorem 7.4.1]{AlK06}), such a space is isomorphic to a Hilbert space.
\begin{corollary}
\label{cor:Hilbert} Let $H$ be a Hilbert space. If $2\leq p<\infty$, then for any simple, adapted $H$-valued process $\phi $ and any
$B\in\cJ$,
\begin{equation*}
\Big(\E\sup_{t>0}\Big\|\int_{(0,t]\ti B} \phi \dd\tilde{N}\Big\|_H^p\Big)^{\frac{1}{p}} \eqsim_p \|\phi \one_{B} \|_{\cD^p_{2,H}\cap\cD^p_{p,H}}.
\end{equation*}
On the other hand, if $1<p<2$ then
\begin{equation*}
\Big(\E\sup_{t>0}\Big\|\int_{(0,t]\ti B} \phi
\dd\tilde{N}\Big\|_H^p\Big)^{\frac{1}{p}} \eqsim_p
\|\phi \one_{
B}\|_{\cD^p_{2,H}+\cD^p_{p,H}}.
\end{equation*}
\end{corollary}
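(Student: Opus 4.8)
The plan is to exploit the fact that a Hilbert space $H$ is simultaneously $2$-uniformly smooth and $2$-uniformly convex, so that by Pisier's characterization it has \emph{both} martingale type $2$ and martingale cotype $2$. This places $H$ in the scope of Theorem~\ref{thm:typesBanach} and Theorem~\ref{thm:cotypesBanach} with the single parameter $s=2$, and the two one-sided estimates should combine into the asserted two-sided estimate.

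First I would treat the range $2\leq p<\infty$, which corresponds to the case $s\leq p$ in both theorems. Applying Theorem~\ref{thm:typesBanach} with $s=2$ gives the upper bound
\begin{equation*}
\Big(\E\sup_{t>0}\Big\|\int_{(0,t]\ti B}\phi\dd\tilde{N}\Big\|_H^p\Big)^{\frac{1}{p}}\lesssim_p \|\phi\one_B\|_{\cD^p_{2,H}\cap\cD^p_{p,H}},
\end{equation*}
already in the maximal form. For the matching lower bound I would invoke Theorem~\ref{thm:cotypesBanach} with $s=2$, which yields
\begin{equation*}
\|\phi\one_B\|_{\cD^p_{2,H}\cap\cD^p_{p,H}}\lesssim_p \Big(\E\Big\|\int_{\R_+\ti B}\phi\dd\tilde{N}\Big\|_H^p\Big)^{\frac{1}{p}}.
\end{equation*}
The only wrinkle is that the cotype estimate is stated for the terminal integral over $\R_+\ti B$ rather than for the maximal function; this is harmless, since for a simple process $\phi$ with time partition bounded by $t_{l+1}$ one has $\int_{(0,t]\ti B}\phi\dd\tilde{N}=\int_{\R_+\ti B}\phi\dd\tilde{N}$ for all $t\geq t_{l+1}$, so the supremum dominates the terminal value and the lower bound lifts verbatim to the maximal form. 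Chaining the two displays produces the claimed equivalence $\eqsim_p$ for $2\leq p<\infty$.

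The range $1<p<2$ is entirely parallel and corresponds to the case $p<s=2$ in both theorems, the difference being that the intersection norm $\cD^p_{2,H}\cap\cD^p_{p,H}$ is everywhere replaced by the sum norm $\cD^p_{2,H}+\cD^p_{p,H}$. Theorem~\ref{thm:typesBanach} (second case) supplies the upper bound and Theorem~\ref{thm:cotypesBanach} (second case) the lower bound, and the same observation about the maximal function passing the terminal lower bound up to the supremum applies without change.

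I do not anticipate any genuine obstacle here: the content of the corollary is entirely contained in the two main theorems, and the proof is a matter of specializing to $s=2$ and observing that for a Hilbert space the hypotheses of \emph{both} theorems hold simultaneously. The only point requiring a sentence of care is the reconciliation of the maximal functional appearing in the upper bound with the terminal functional appearing in the cotype lower bound, which I would dispatch by the monotonicity argument described above.
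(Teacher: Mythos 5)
Your proof is correct and follows essentially the same route as the paper: the corollary is stated there as an immediate consequence of Theorems~\ref{thm:typesBanach} and \ref{thm:cotypesBanach} applied with $s=2$, using that a Hilbert space has both martingale type $2$ and martingale cotype $2$. Your extra observation that the terminal-value lower bound passes to the maximal functional (since the integral of a simple process stabilizes beyond the last partition point, so the supremum dominates the terminal integral) is exactly the right way to reconcile the two forms, and is the only point of care needed.
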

The estimates in Corollary~\ref{cor:Hilbert} characterize the class of
$L^p$-stochastically integrable Hilbert space-valued functions. Outside the
setting of Hilbert spaces, the Bichteler-Jacod inequalities in
Theorem~\ref{thm:typesBanach} do not lead to optimal (i.e., two-sided) bounds
for the stochastic integral. Indeed, this is already true if $X$ is an
$L^q$-space with $q\neq 2$. In this case the following optimal bounds were
recently established in \cite{Dir12}. Let $(S,\Si,\mu)$ be any measure space and
consider the square function norm
$$\|\phi \|_{\cS_q^p} = \|\phi \|_{L^p(\Omega;L^q(S;L^2(\R_+\times J)))} =
\Big(\E\Big\|\Big(\int_{\R_+\ti J} |\phi |^2 \dd t\ti
\!\dd\nu\Big)^{\frac{1}{2}}\Big\|_{L^q(S)}^p\Big)^{\frac{1}{p}}.$$
\begin{theorem}[Two-sided bounds for $X = L^q(S)$ \cite{Dir12}]
\label{thm:summarySILqPoisson} Let $1<p,q<\infty$. For any $B\in\cJ$ and any simple, adapted $L^q(S)$-valued process
$\phi $,
\begin{equation*}
\Big(\E \sup_{t>0} \Big\|\int_{(0,t]\ti
B} \phi  \dd\tilde{N}\Big\|_{L^q(S)}^p\Big)^{\frac{1}{p}} \eqsim_{p,q}
\|\phi \one_{
B}\|_{\cI_{p,L^q}},
\end{equation*}
where $\cI_{p,L^q}$ is given by
\begin{align*}
\cS_{q}^p \cap \cD_{q,L^q(S)}^p \cap \cD_{p,L^q(S)}^p & \ \ \mathrm{if} \ \ 2\leq q\leq p;\\
\cS_{q}^p \cap (\cD_{q,L^q(S)}^p + \cD_{p,L^q(S)}^p) & \ \ \mathrm{if} \ \ 2\leq p\leq q;\\
(\cS_{q}^p \cap \cD_{q,L^q(S)}^p) + \cD_{p,L^q(S)}^p & \ \ \mathrm{if} \ \ p\le 2\leq q;\\
(\cS_{q}^p + \cD_{q,L^q(S)}^p) \cap \cD_{p,L^q(S)}^p & \ \ \mathrm{if} \ \ q\le 2\leq p;\\
\cS_{q}^p + (\cD_{q,L^q(S)}^p \cap \cD_{p,L^q(S)}^p) & \ \ \mathrm{if} \ \ q\leq p\leq 2;\\
\cS_{q}^p + \cD_{q,L^q(S)}^p + \cD_{p,L^q(S)}^p & \ \ \mathrm{if} \ \ p\leq q\leq 2.
\end{align*}
\end{theorem}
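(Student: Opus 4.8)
The plan is to treat the terminal integral $\int_{\R_+\ti B}\phi\dd\tilde N=\sum_{i,j}d_{ij}$, with $d_{ij}=\tilde N_{ij}y_{ij}$ and $y_{ij}=\sum_k F_{ijk}x_{ijk}$, exactly as in the proofs of Theorems~\ref{thm:typesBanach} and \ref{thm:cotypesBanach}: after reducing to $B=J$, the sequence $(d_{ij})$ is a martingale difference sequence for the lexicographic filtration \eqref{eqn:filtLexico}, the increment $\tilde N_{ij}$ is independent of $\cF_{(i,j)-1}$, and $y_{ij}$ is $\cF_{(i,j)-1}$-measurable. The supremum in the upper bound is then absorbed by Doob's inequality \eqref{eqn:DoobMaxInt}, while for the lower bound it suffices to estimate the terminal value from below, since it is dominated by the maximal function. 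Everything therefore reduces to a two-sided equivalence for $\big\|\sum_{i,j}d_{ij}\big\|_{L^p(\Om;L^q(S))}$.

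The engine is a two-sided Burkholder--Rosenthal inequality for $L^q(S)$-valued martingale differences. Since $1<q<\infty$, $L^q(S)$ is a UMD Banach lattice with martingale type $\min(q,2)$ and cotype $\max(q,2)$, and the technical heart is to establish an equivalence expressing $\big\|\sum_{i,j}d_{ij}\big\|_{L^p(\Om;L^q(S))}$ through three building blocks: an unconditional term $\big(\sum_{i,j}\E\|d_{ij}\|_{L^q}^p\big)^{1/p}$, a conditional $q$-variation term $\big\|\big(\sum_{i,j}\E_{(i,j)-1}|d_{ij}|^q\big)^{1/q}\big\|_{L^p(\Om;L^q(S))}$, and a conditional square-function term $\big\|\big(\sum_{i,j}\E_{(i,j)-1}|d_{ij}|^2\big)^{1/2}\big\|_{L^p(\Om;L^q(S))}$, where the powers and sums are taken pointwise on $S$. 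Conditional independence lets us compute, via Lemma~\ref{lem:PoissonMoments}, $\E_{(i,j)-1}|d_{ij}|^r=\E|\tilde N_{ij}|^r\,|y_{ij}|^r\eqsim_r(t_{i+1}-t_i)\nu(A_j)\,|y_{ij}|^r$ pointwise, so that these three blocks become precisely $\cD_{p,L^q}^p$, $\cD_{q,L^q}^p$ and $\cS_q^p$.

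The six regimes come from the way these three terms combine, which is governed by the positions of $p$ and $q$ relative to $2$. Pointwise in $S$ the scalar compensated-Poisson estimate controls $\big\|\sum_i\tilde N_i a_i(\sigma)\big\|_{L^p(\Om)}$ by a square-function piece and an $\ell^p$ piece, combined by an intersection when $p\ge2$ and by a sum when $p<2$ (this is the case $S$ a point, matching Corollary~\ref{cor:Hilbert} with $H=\R$). Assembling these over $\sigma$ yields naturally the $L^q(S;L^p(\Om))$ norm, and passing to the required $L^p(\Om;L^q(S))$ norm forces an interchange of the $L^q(S)$ and $L^p(\Om)$ layers: Minkowski's inequality runs freely in one direction according to the sign of $q-p$, Hölder supplies the complementary bound, and it is this interchange together with the lattice structure that produces the third, $q$-variation term $\cD_{q,L^q}^p$ and dictates each intersection or sum. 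Tracking the two interchanges in the six orderings of $\{p,q,2\}$ yields the stated configurations; at the extreme $2\le q\le p$ every interchange is favorable and one obtains the full intersection $\cS_q^p\cap\cD_{q,L^q}^p\cap\cD_{p,L^q}^p$, while at $p\le q\le2$ one obtains the full sum.

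The lower bounds I would obtain by duality, as in the proof of Theorem~\ref{thm:cotypesBanach}: using reflexivity of $L^q(S)$ and the identification of the dual of a sum with an intersection (Lemma~\ref{lem:normingIntersection}), the reverse inequality in each regime follows by testing against a suitable $L^{q'}(S)$-valued integrand and invoking the already-established upper bound for $X=L^{q'}(S)$, whose dual norm configuration is exactly the one needed. The main obstacle is the sharp interchange in the mixed regimes $p\le2\le q$ and $q\le2\le p$, where $p$ and $q$ sit on opposite sides of $2$ and neither Minkowski direction is free between the square function and the $L^q(S)$/$L^p(\Om)$ layers; there one must first split the integrand into small-jump and large-jump parts, estimate each separately, and then verify that the resulting upper and lower bounds match. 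Making these decompositions interact correctly with the three building blocks and confirming equivalence of constants simultaneously in all six cases is the delicate part of the argument.
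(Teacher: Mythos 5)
First, a point of order: the paper does not prove Theorem~\ref{thm:summarySILqPoisson} at all. It is imported verbatim from \cite{Dir12} (note the attribution in the theorem header), so there is no proof in this paper to compare yours against. Judged on its own terms, your proposal has the right architecture, and it is broadly the architecture of \cite{Dir12}: reduction to the terminal integral by Doob's inequality, identification of $(d_{ij})=(\tilde N_{ij}y_{ij})$ as a martingale difference sequence for the lexicographic filtration, computation of the conditional moments via Lemma~\ref{lem:PoissonMoments} so that the abstract Burkholder--Rosenthal terms become $\cD_{p,L^q(S)}^p$, $\cD_{q,L^q(S)}^p$ and $\cS_q^p$, and duality in the spirit of Lemma~\ref{lem:normingIntersection} and the proof of Theorem~\ref{thm:cotypesBanach} to convert upper bounds into the matching lower bounds (the six configurations do pair off correctly under this duality).

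The genuine gap is at what you call the ``engine.'' The two-sided Burkholder--Rosenthal inequality for $L^q(S)$-valued martingale difference sequences, with precisely those three building blocks combined in precisely those six ways, is not an available tool to be invoked: it is the main theorem of \cite{Dir12}, and proving it is where essentially all of the work lies. Moreover, the mechanism you sketch for it cannot work. You propose to prove the scalar (fiberwise) estimate pointwise in $\sigma\in S$, assemble it into an $L^q(S;L^p(\Om))$ bound, and then pass to $L^p(\Om;L^q(S))$ by Minkowski in the free direction and H\"older in the other. But the interchange of $L^p(\Om)$ and $L^q(S)$ is a one-sided, generally strict, inequality; there is no ``complementary bound'' in the unfavorable direction on general measure spaces (H\"older cannot supply it, and $\R_+\ti J$ is not even of finite measure), so no amount of bookkeeping of interchanges can yield a two-sided equivalence. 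What such interchanges do yield is exactly the kind of lossy one-sided estimates that Theorems~\ref{thm:typesBanach} and \ref{thm:cotypesBanach} already provide via martingale type and cotype --- estimates which the paper explicitly points out are suboptimal for $L^q$ with $q\neq 2$. In \cite{Dir12} this step is instead handled by genuinely different tools (duality combined with Rosenthal-type inequalities in the vein of Junge--Xu, and a separate, delicate treatment of the mixed regimes $p\le 2\le q$ and $q\le 2\le p$). Your closing caveat about the mixed regimes in effect concedes this; as written, the proposal assumes the statement it sets out to prove.
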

The result in Theorem~\ref{thm:summarySILqPoisson} can be further extended to integrands taking values
in a non-commutative $L^q$-space; see Section 7 of \cite{Dir12} for further details.

\section{Maximal inequalities for Poisson stochastic convolutions}\label{sec:Maximal}

It is a well-known strategy to derive maximal $L^p$-inequalities for stochastic
convolutions from Bichteler-Jacod inequalities by using a dilation of the
semigroup.
This approach was first utilized in \cite{HaS01}. A result similar to Theorem \ref{thm:maxStochConv}
has been obtained independently in \cite{Zhu11} by different methods and further ramifications are worked out there.
Let us say that a strongly
continuous semigroup $S$ on a Banach space $X$ \emph{has an isometric dilation on a Banach space $Y$} if there
exists an isomorphic embedding $Q:X\rightarrow Y$, a bounded projection
$P:Y\rightarrow QX$ and a strongly continuous group of isometries
$(U(t))_{t\in \R}$ on $Y$ such that $QS(t) = PU(t)Q$ for all $t>0$.
\begin{example}
\label{exa:dilation} Let us list some examples of semigroups which admit an isometric dilation.
\begin{enumerate}
\renewcommand{\labelenumi}{(\roman{enumi})}
\item A $C_0$-semigroup of contractions on a Hilbert space admits an isometric
dilation on a (possibly different) Hilbert space (Sz.-Nagy's dilation theorem);
\item Let $S$ be any measure space. If $1<p<\infty$, then any $C_0$-semigroup of
positive contractions on $L^p(S)$ admits an isometric dilation on
$L^p(\tilde{S})$, where $\tilde{S}$ is a possibly different measure space (cf.\
Fendler's dilation theorem \cite[Theorem 1]{Fen97});
\item \label{enu:R-sec} If $X$ is UMD space and $A$ is an R-sectorial operator
of type $<\frac{\pi}{2}$ on $X$, then the bounded analytic semigroup generated by $-A$
allows an isometric dilation on the space $\gamma(L^2(\R),X)$ of all
$\gamma$-radonifying operators from $L^2(\R)$ to $X$
if and only if $A$ has a bounded $H^{\infty}$-calculus \cite{FrW06}.
\end{enumerate}
\end{example}
We shall use the following simple approximation lemma.
\begin{lemma}
\label{lem:app}
Fix $1\leq p,s<\infty$ and let $X$ be a Banach space. Suppose that $\phi :\Om\ti\R_+\ti J\rightarrow X$ is a measurable process such that
for any $t \in \R_+$ and $j \in J$ the map $\om\rightarrow \phi (\om,t,j)$ is $\cF_t$-measurable. If $\phi  \in \cD_{s,X}^p\cap \cD_{p,X}^p$, then there is a sequence of simple, adapted processes $\phi _n$ converging to $\phi $ in $\cD_{s,X}^p\cap \cD_{p,X}^p$.
\end{lemma}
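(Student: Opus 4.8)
The plan is to build the approximating sequence in several stages, at each stage preserving adaptedness and converging simultaneously in $\cD_{s,X}^p$ and $\cD_{p,X}^p$; since the norm of the intersection is equivalent to the maximum of the two norms, convergence in the intersection then follows from convergence in each. First I would reduce $\phi$ to a convenient normal form. Using the dominated convergence theorem in both $\cD_{s,X}^p$ and $\cD_{p,X}^p$, we may truncate $\phi$ so that $\|\phi\|_X\le M$ for some $M<\infty$, so that $\phi$ is supported in $(0,T]$ for some $T<\infty$, and (using $\si$-finiteness of $\nu$) so that $\phi$ is supported on $(0,T]\ti A$ with $\nu(A)<\infty$. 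Next, choosing an increasing sequence of finite sub-$\si$-algebras of $\cJ$ that generate $\cJ$ on $A$ and replacing $\phi$ by its conditional expectation in the $J$-variable, vector-valued martingale convergence reduces us to a process of the form $\phi=\sum_j\one_{A_j}\psi_j$, where $\{A_j\}$ is a finite partition of $A$ into sets of finite $\nu$-measure and each $\psi_j:\Om\ti(0,T]\to X$ is bounded, adapted (that is, $\om\mapsto\psi_j(\om,t)$ is $\cF_t$-measurable), and supported in $(0,T]$. Because these operations only touch the $\Om$- and $J$-variables, adaptedness is retained, and since we have reduced to bounded processes on a set of finite measure all convergences hold in both $\cD_{s,X}^p$ and $\cD_{p,X}^p$ at once (this is what lets the argument cover the endpoint cases $p=1$ or $s=1$).

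The crucial step is to discretize the time variable while keeping the process adapted, and this is where I expect the main difficulty to lie: a naive cell average over $(t_i,t_{i+1}]$ is only $\cF_{t_{i+1}}$-measurable, not $\cF_{t_i}$-measurable. To circumvent this I would first regularize each $\psi_j$ by \emph{backward} averaging in time,
\[
\psi_j^h(\om,t) := \frac1h\int_{(t-h)\vee 0}^{t} \psi_j(\om,r)\dd r \qquad (0<h\le 1).
\]
Since this integral involves only the values of $\psi_j$ on $[t-h,t]$, and $\psi_j(\cdot,r)$ is $\cF_r\subseteq\cF_t$-measurable for $r\le t$, the process $\psi_j^h$ is again adapted; moreover $t\mapsto\psi_j^h(\om,t)$ is continuous for almost every $\om$, and $\psi_j^h$ is supported in $(0,T+1]$. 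For each fixed $\om$ we have $\psi_j^h(\om,\cdot)\to\psi_j(\om,\cdot)$ in $L^q((0,T+1];X)$ as $h\downarrow 0$ (the backward averages form an approximate identity), for $q\in\{s,p\}$; as these quantities are bounded uniformly in $h$ by $2M(T+1)^{1/q}$, bounded convergence on $\Om$ upgrades this to convergence in $\cD_{s,X}^p\cap\cD_{p,X}^p$.

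Finally I would discretize the (now time-continuous, adapted) process by its left-endpoint Riemann sums: for a partition $0=t_1<\dots<t_{l+1}=T+1$ form $\sum_i\psi_j^h(\cdot,t_i)\one_{(t_i,t_{i+1}]}$, whose coefficient $\psi_j^h(\cdot,t_i)$ is $\cF_{t_i}$-measurable by construction. Pointwise continuity in $t$ yields convergence of these sums to $\psi_j^h$ in $L^q((0,T+1];X)$ for almost every $\om$ as the mesh tends to $0$, and bounded convergence again gives convergence in $\cD_{s,X}^p\cap\cD_{p,X}^p$. To bring the result into the exact form \eqref{eqn:simpleBanach} required by Definition~\ref{def:simplePoisson-SI}, it remains to approximate each bounded $\cF_{t_i}$-measurable random vector $\psi_j^h(\cdot,t_i)$ in $L^p(\Om;X)$ by an $X$-valued simple function $\sum_k F_{ijk}x_{ijk}$ with $F_{ijk}\in L^\infty(\Om,\cF_{t_i})$ and $x_{ijk}\in X$, which is possible by strong measurability and preserves adaptedness. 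Combining the finitely many indices $j$ over a common time partition and extracting a diagonal sequence then produces simple, adapted processes $\phi_n$ with $\phi_n\to\phi$ in $\cD_{s,X}^p\cap\cD_{p,X}^p$. The main obstacle, as noted, is the simultaneous preservation of adaptedness and approximation in time, which the backward-averaging regularization resolves by producing an adapted process that is genuinely continuous in $t$.
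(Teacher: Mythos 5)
Your proof is correct, but it takes a genuinely different route from the paper's. The paper disposes of the lemma in a few lines: after localizing to $(0,T]\times B$ with $\nu(B)<\infty$, it observes that on a finite measure space the intersection norm of $\cD_{s,X}^p\cap \cD_{p,X}^p$ collapses (up to constants depending on $T$ and $\nu(B)$) to a single Bochner-type norm --- $L^p(\Om;L^p((0,T]\times B;X))$ when $s\le p$, an $L^s$-type norm when $p\le s$ --- and then quotes the known density theorem for simple adapted processes \cite[Theorem 4.2]{Rud04}. You instead reprove that density result from scratch: truncation and localization, discretization of the jump variable by conditional expectations with respect to finite $\sigma$-algebras, and --- the real content --- discretization in time, where you correctly identify the conflict between adaptedness and cell averaging and resolve it with the backward mollification $\psi_j^h(\om,t)=h^{-1}\int_{(t-h)\vee 0}^{t}\psi_j(\om,r)\dd r$, which is adapted and continuous in $t$, so that left-endpoint Riemann sums are simultaneously adapted and convergent; this is essentially the classical argument (familiar from the Brownian setting) that underlies the cited theorem. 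What the paper's route buys is brevity and a reduction of the two-norm statement to a quotable one-norm statement; what your route buys is a self-contained proof that treats both norms at once and makes explicit where adaptedness is preserved. One point to tighten: an increasing sequence of \emph{finite} sub-$\sigma$-algebras generating $\cJ$ on $A$ need not exist, since $\cJ$ is not assumed countably generated. Instead, use that $\phi$, being strongly measurable, agrees a.e.\ with a function measurable with respect to $\cF\otimes\cB(\R_+)\otimes\sigma(D_1,D_2,\ldots)$ for countably many sets $D_k\in\cJ$, and run your martingale convergence argument with the finite $\sigma$-algebras $\cA_n=\sigma(D_1\cap A,\ldots,D_n\cap A)$. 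With that adjustment your argument is complete.
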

\begin{proof}
Since $\nu$ is a $\si$-finite measure, it suffices to show that $\phi \one_{(0,T]\ti
B}$ can be approximated in $L^p(\Om;L^s((0,T]\ti B;X))\cap L^p(\Om;L^p((0,T]\ti
B;X))$ by simple, adapted processes, for any fixed $T>0$ and set $B \in \cJ$ of
finite measure. Since
$$L^p(\Om;L^s((0,T]\ti B;X))\cap L^p(\Om;L^p((0,T]\ti B;X))$$
can be isomorphically (with constants depending on $T$ and $\nu(B)$) identified
with
$$L^p(\Om;L^p((0,T]\ti B;X)) \qquad \mathrm{if \ } s\leq p$$
and with
$$L^s(\Om;L^s((0,T]\ti B;X)) \qquad \mathrm{if \ } p\leq s,$$
the statement follows from \cite[Theorem 4.2]{Rud04}.
\end{proof}
\begin{theorem}[Maximal inequality for stochastic convolutions]
\label{thm:maxStochConv}
Fix $1<s\leq 2$ and let $X$ be a martingale type $s$ Banach space. Let $S$ be a bounded $C_0$-semigroup on $X$ and suppose that $S$ has an isometric
dilation on a martingale type $s$ space $Y$. If $s\leq p<\infty$, then any simple, adapted $X$-valued process $\phi$ satisfies
\begin{align*}
& \Big(\E\sup_{t>0}\Big\|\int_{(0,t]\ti B} S(t-u)\phi (u)
\dd\tilde{N}\Big\|_X^p\Big)^{\frac{1}{p}} \lesssim_{p,s,X,S} \|\phi \one_{
B}\|_{\cD_{s,X}^p\cap \cD_{p,X}^p}.
\end{align*}
On the other hand, if $1\leq p<s$ then
\begin{align*}
& \Big(\E\sup_{t>0}\Big\|\int_{(0,t]\ti B} S(t-u)\phi (u)
\dd\tilde{N}\Big\|_X^p\Big)^{\frac{1}{p}} \lesssim_{p,s,X} \|\phi \one_{
B}\|_{\cD_{s,X}^p + \cD_{p,X}^p}.
\end{align*}
\end{theorem}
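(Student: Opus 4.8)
The plan is to use the isometric dilation to rewrite the stochastic convolution as an ordinary Poisson stochastic integral over the dilation space $Y$, to which Theorem~\ref{thm:typesBanach} applies since $Y$ has martingale type $s$. Write $Q\colon X\to Y$ for the isomorphic embedding, $P\colon Y\to QX$ for the bounded projection and $(U(t))_{t\in\R}$ for the group of isometries with $QS(t)=PU(t)Q$. The central idea is that the non-martingale dependence on $t$ in the convolution can be absorbed into the isometry $U(t)$. Concretely, I introduce the $Y$-valued process $\psi(u):=U(-u)Q\phi(u)$. Since $U(-u)$ and $Q$ are deterministic bounded operators and $\phi(u)$ is $\cF_u$-measurable, $\psi$ is adapted; moreover $\|\psi(u)\|_Y=\|U(-u)Q\phi(u)\|_Y=\|Q\phi(u)\|_Y\eqsim_{Q}\|\phi(u)\|_X$ pointwise, because $U(-u)$ is an isometry and $Q$ an isomorphic embedding. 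In particular $\psi\in\cD_{s,Y}^p\cap\cD_{p,Y}^p$ with $\|\psi\one_B\|_{\cD^p_{q,Y}}\eqsim_{Q}\|\phi\one_B\|_{\cD^p_{q,X}}$ for each $q$, so the intersection and the sum norms on the two sides are equivalent.

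The key identity is obtained by applying $Q$ and using $QS(t-u)=PU(t-u)Q=PU(t)U(-u)Q$ (the group law, valid for $t-u\geq 0$). For fixed $t$ this gives, on $(0,t]$, the integrand identity $QS(t-u)\phi(u)=PU(t)\psi(u)$, and since $Q$ and the fixed operator $PU(t)$ commute with the stochastic integral,
\begin{equation*}
Q\int_{(0,t]\ti B}S(t-u)\phi(u)\dd\tilde N = PU(t)\,I_{t,B}(\psi).
\end{equation*}
Using that $Q$ is an isomorphic embedding, $P$ is bounded and $U(t)$ is an isometry, I obtain the pointwise (in $\om$ and $t$) bound
\begin{equation*}
\Big\|\int_{(0,t]\ti B}S(t-u)\phi(u)\dd\tilde N\Big\|_X \lesssim_{Q} \|PU(t)\,I_{t,B}(\psi)\|_Y \leq \|P\|\,\|I_{t,B}(\psi)\|_Y,
\end{equation*}
where the crucial point is that the $t$-dependent operator $U(t)$ drops out because it preserves the $Y$-norm. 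Taking the supremum over $t$ and then the $L^p(\Om)$-norm reduces the claim to estimating $\big(\E\sup_{t>0}\|I_{t,B}(\psi)\|_Y^p\big)^{1/p}$, which is exactly the left-hand side of Theorem~\ref{thm:typesBanach} applied to the martingale type $s$ space $Y$. For $s\le p<\infty$ this is bounded by $\|\psi\one_B\|_{\cD^p_{s,Y}\cap\cD^p_{p,Y}}$, and for $1<p<s$ by $\|\psi\one_B\|_{\cD^p_{s,Y}+\cD^p_{p,Y}}$; translating back through the norm equivalence of the previous paragraph yields both assertions.

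The main technical obstacle is that neither $u\mapsto S(t-u)\phi(u)$ nor $\psi$ is a simple process, so the identity and the application of Theorem~\ref{thm:typesBanach} must be justified by approximation rather than by finite-sum linearity. Here I would invoke Lemma~\ref{lem:app}: both processes are adapted and lie in $\cD^p_{s}\cap\cD^p_{p}$ (using boundedness of $S$ for the first), hence are $\cD^p_{s}\cap\cD^p_{p}$-limits of simple adapted processes. For simple integrands the commutation of the bounded operators $Q$ and $PU(t)$ with $I_{t,B}$ is immediate, and the estimate of Theorem~\ref{thm:typesBanach} provides the continuity of $I_{t,B}$ in the relevant norm; passing to the limit (using the maximal estimate to control the convergence of $t\mapsto I_{t,B}(\psi_n)$ uniformly in $t$, in $L^p(\Om)$) extends both the identity and the bound from the approximants to $\phi$ and $\psi$. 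I would carry out the details first for $s\le p$, where Doob's inequality underlying Theorem~\ref{thm:typesBanach} is available, and then treat $1<p<s$ in the same way with the sum norm, the endpoint $p=1$ requiring only the corresponding $L^1$-level maximal control.
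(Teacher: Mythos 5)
Your proposal is correct and follows essentially the same route as the paper: transfer the convolution to the dilation space via $QS(t-u)=PU(t)U(-u)Q$, estimate away the $t$-dependent isometry $U(t)$ and the projection $P$, apply Theorem~\ref{thm:typesBanach} to the adapted integrand $\psi(u)=U(-u)Q\phi(u)$ in the martingale type $s$ space $Y$, and handle the non-simplicity of the integrands via Lemma~\ref{lem:app}. The only cosmetic difference is that for $1<p<s$ you transport the sum norm directly through $Q$, whereas the paper first decomposes $\phi=\phi_1+\phi_2$ into simple adapted pieces and runs the dilation argument for each single norm separately; both are fine (and note the paper's own proof, like yours, does not genuinely cover the endpoint $p=1$ appearing in the statement).
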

\begin{proof}
In view of the identity $\one_B S(t-u)\phi(u) = S(t-u)(\phi(u)\one_B)$ there is no
loss of generality in assuming that $B=J$.

Let $\phi $ be a simple, adapted process. Since $S$ is a bounded $C_0$-semigroup, the map
$(\om,u,j)\mapsto \one_{(0,t]}(u)S(t-u)(\phi (\om,u,j))$ is strongly measurable and in
$\cD_{r,X}^p$ for $r=p,s$. Moreover, for every fixed $u$ and $j$, $\om\mapsto
\one_{(0,t]}(u)S(t-u)(\phi (\om,u,j))$ is $\cF_u$-measurable and by
Lemma~\ref{lem:app} and Theorem~\ref{thm:typesBanach} it is $L^p$-stochastically integrable.
Since $S$ has an isometric dilation on a martingale type $s$ space
$Y$, we can apply Theorem~\ref{thm:typesBanach} to obtain
\begin{align*}
& \Big(\E\sup_{t>0}\Big\|\int_{(0,t]\ti J} S(t-u)\phi (u)
\dd\tilde{N}\Big\|_X^p\Big)^{\frac{1}{p}} \\
& \qquad \eqsim_{Q} \Big(\E\sup_{t>0}\Big\|\int_{(0,t]\ti J}
QS(t-u)\phi (u) \dd\tilde{N}\Big\|_{Y}^p\Big)^{\frac{1}{p}} \\
& \qquad = \Big(\E\sup_{t>0}\Big\|\int_{(0,t]\ti J} PU(t-u)Q\phi (u)
\dd\tilde{N}\Big\|_{Y}^p\Big)^{\frac{1}{p}} \\
& \qquad \lesssim_P \Big(\E\sup_{t>0}\Big\|\int_{(0,t]\ti J} U(-u)Q\phi (u)
\dd\tilde{N}\Big\|_{Y}^p\Big)^{\frac{1}{p}}.
\end{align*}
The integrand in the last line is $L^p$-stochastically integrable, so by
Theorem~\ref{thm:typesBanach},
\begin{align*}
& \Big(\E\sup_{t>0}\Big\|\int_{(0,t]\ti J} U(-u)Q\phi (u)
\dd\tilde{N}\Big\|_{Y}^p\Big)^{\frac{1}{p}} \\
& \qquad \lesssim_{p,s,Y} \|U(-u)Q\phi (u)\|_{\cD_{s,Y}^p\cap \cD_{p,Y}^p} \\
& \qquad \lesssim_Q \|\phi \|_{\cD_{s,X}^p\cap \cD_{p,X}^p}.
\end{align*}
In the case $1<p<s$ it suffices to consider a decomposition $\phi =\phi _1 + \phi _2$, where
$\phi _1,\phi _2$ are simple adapted processes (see the reduction argument in the proof
of Theorem~\ref{thm:typesBanach}) and show that
\begin{align*}
\Big(\E\sup_{t>0}\Big\|\int_{(0,t]\ti J} S(t-u)\phi _1(u)
\dd\tilde{N}\Big\|_X^p\Big)^{\frac{1}{p}} & \lesssim_{p,s,X,S}
\|\phi _1\|_{\cD_{s,X}^p} \\
\Big(\E\sup_{t>0}\Big\|\int_{(0,t]\ti J} S(t-u)\phi _2(u)
\dd\tilde{N}\Big\|_X^p\Big)^{\frac{1}{p}} & \lesssim_{p,s,X,S}
\|\phi _2\|_{\cD_{p,X}^p}.
\end{align*}
These inequalities follow using a dilation argument as above.
\end{proof}
By applying Theorem~\ref{thm:maxStochConv} to the semigroups in (i) and (ii) of Example~\ref{exa:dilation} we find improvements of the maximal inequalities obtained in \cite{MaR10} and \cite{MPR10}. Indeed, if $X$ is a Hilbert space (so $s=2$), then Theorem~\ref{thm:maxStochConv} improves upon \cite[Lemma 4]{MaR10}. Since in this case our maximal estimates are already optimal for the trivial semigroup $S(t)\equiv\mathrm{Id}_H$ (c.f.\ Corollary~\ref{cor:Hilbert}), they are the best possible. If $X=L^q(S)$ (so that $s=q$) and $q=p$, then we find a sharpened version of \cite[Proposition 3.3]{MPR10}. To apply Theorem~\ref{thm:maxStochConv} to the semigroup generated by an R-sectorial operator satisfying the conditions in (iii)
of Example~\ref{exa:dilation}, one should note that the space $\gamma(L^2(\R),X)$ of all $\gamma$-radonifying operators from $L^2(\R)$ to $X$ is isometrically isomorphic to a closed subspace of $L^2(\tilde{\Om};X)$, for a suitable probability space $\tilde{\Om}$. Therefore, if $X$ has (martingale) type $s$, then $\gamma(L^2(\R),X)$ has (martingale) type $s$ as well.

\section{The Poisson stochastic integral in UMD Banach spaces}\label{sec:UMD}

As mentioned before, the estimates in Theorems~\ref{thm:typesBanach} and \ref{thm:cotypesBanach} do not lead to an It\^{o} isomorphism if $X$ is not a
Hilbert space. For UMD Banach spaces, however, we can formulate an `abstract' It\^{o}-type isomorphism which can serve as a basis for the development of a vector-valued Poisson Skorohod integral.

In what follows we
let $N$ be a Poisson random measure on a measurable space
$(\YY,\Y)$ with $\sigma$-finite intensity measure $\mu$. We denote
$$ \Y_\mu = \{B\in \Y: \ \mu(B)<\infty\}.$$

\begin{definition}[Poisson $p$-norm]\label{def:poisson-norm}
Let $f : \YY \to X$ be a simple function of the form
\begin{align*}
f = \sum_{j=1}^m \one_{B_j} \otw x_j,
\end{align*}
where $x_j\in X$ and the sets $B_j\in \Y_\mu$ are pairwise disjoint. For $1
\leq p < \infty$ we define the (compensated) \emph{Poisson
$p$-norm} of $f$ by
\begin{align*}
 \| f  \|_{\nu_p(\YY; X)} :=
  \big( \E \| \hN(f)\|_X^p \big)^{\frac1p},
\end{align*}
where
$$ \hN(f):= \sum_{j} \hN(B_j)\otw x_j.$$
\end{definition}
It is a simple matter to check that this definition does not depend on the
particular representation of $f$ as a simple function and that $\|
\cdot\|_{\nu_p(\YY; X)}$ defines a norm on the linear space of simple $X$-valued
functions.\par

A Banach space $X$ is called a {\em UMD space} if for some $p\in (1, \infty)$
(equivalently,
for all $p\in (1,\infty)$) there is a constant $\beta\ge 0$ such that for all
$X$-valued $L^p$-martingale difference sequences $(d_n)_{n\geq 1}$ and all signs
$(\epsilon_n)_{n\geq 1}$ one has
\begin{equation*}
 \E \Big\| \sum_{n=1}^N \epsilon_n d_n\Big\|_X^p \le \beta^p  \E \Big\|
\sum_{n=1}^N  d_n\Big\|_X^p, \quad \forall N\geq 1.
\end{equation*}
The least admissible constant in this definition is called the {\em
UMD$_p$-constant} of $X$ and is
denoted by $\beta_{p,X}$. It is well known that once the UMD$_p$ property holds for
one $p\in (1,\infty)$, then it holds for all
$p\in (1,\infty)$; for proofs see \cite{Bu1, Mau}. For more information on UMD
spaces we refer to the survey papers by
Burkholder \cite{Burk01} and Rubio de Francia \cite{RF}.

\begin{example}
 Every Hilbert space $H$ is a UMD space with $\beta_{p,H} = \max\{p,p'\}$.
\end{example}

\begin{example} The Lebesgue spaces $L^p(\mu)$, $1<p<\infty$, are UMD spaces
(with $\beta_{p,L^p(\mu)} = \max\big\{p,p'\big\}$).
 More generally, if $X$ is a UMD space, then $L^p(\mu;X)$, $1<p<\infty$, is a
UMD space (with $\beta_{p,L^p(\mu;X)} = \beta_{p,X}$)
\end{example}

\begin{example}
Duals, closed subspaces, and quotients of UMD spaces are UMD.
\end{example}

The following result is a special case of the decoupling inequality in \cite[Theorem 13.1]{Nee-ISEM} and
\cite[Theorem 2.4.1]{Ver06}.

\begin{theorem}[UMD Poisson stochastic integral] \label{thm:UMD}
Let $X$ be a UMD space and let $1<p<\infty$. If $\phi$ is a simple
adapted process with values in $X$, then
$$\E \sup_{t>0}\Big\n \int_{(0,t]\times B}
\phi \dd \tilde N \Big\n^p \eqsim_{p,X}
\n \phi \one_{B}\n_{L^p(\Om;\nu_p(\R_+\times J;X))}^p.$$
\end{theorem}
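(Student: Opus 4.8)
The plan is to strip off the supremum with Doob's inequality, reduce to the terminal integral, and then recognise the Poisson $p$-norm as the $p$-th moment of a \emph{decoupled} stochastic integral. First I would reduce to the case $B=J$ by replacing $\phi$ with $\phi\one_B$, exactly as in the proofs of Theorems~\ref{thm:typesBanach} and \ref{thm:cotypesBanach}. Since $t\mapsto \|\int_{(0,t]\ti J}\phi\dd\tilde N\|_X$ is a submartingale, Doob's maximal inequality bounds its $p$-th maximal moment by $(p/(p-1))^p$ times the terminal moment $\E\|\int_{\R_+\ti J}\phi\dd\tilde N\|_X^p$, while the reverse bound is trivial. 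Hence it suffices to prove
\[
\E\Big\|\int_{\R_+\ti J}\phi\dd\tilde N\Big\|_X^p \eqsim_{p,X} \|\phi\|_{L^p(\Om;\nu_p(\R_+\ti J;X))}^p.
\]

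Writing $\phi$ in the simple form \eqref{eqn:simpleBanach}, I would set $y_{ij}=\sum_k F_{ijk}x_{ijk}$ and $\tilde N_{ij}=\tilde N((t_i,t_{i+1}]\ti A_j)$, so that $\int_{\R_+\ti J}\phi\dd\tilde N=\sum_{i,j}y_{ij}\tilde N_{ij}$. With respect to the lexicographically ordered filtration $(\cF_{(i,j)})$ used in the proof of Theorem~\ref{thm:typesBanach}, each $y_{ij}$ is $\cF_{(i,j)-1}$-measurable, so $(y_{ij}\tilde N_{ij})$ is an $X$-valued martingale difference sequence.

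Next I would decouple. On a product space $\Om\ti\Om'$ take an independent copy $\tilde N'$ of the compensated Poisson measure and put $\tilde N_{ij}'=\tilde N'((t_i,t_{i+1}]\ti A_j)$. Conditionally on $\cF_{(i,j)-1}$ the variable $\tilde N_{ij}'$ is a compensated Poisson variable with parameter $(t_{i+1}-t_i)\nu(A_j)$, independent of $y_{ij}$, and hence has the same conditional law as $\tilde N_{ij}$; thus $(y_{ij}\tilde N_{ij}')$ is a conditionally independent tangent sequence to $(y_{ij}\tilde N_{ij})$. The two-sided UMD decoupling inequality (the special case of \cite[Theorem 13.1]{Nee-ISEM} and \cite[Theorem 2.4.1]{Ver06} invoked in the statement) then gives
\[
\E\Big\|\sum_{i,j}y_{ij}\tilde N_{ij}\Big\|_X^p \eqsim_{p,X} \E\Big\|\sum_{i,j}y_{ij}\tilde N_{ij}'\Big\|_X^p,
\]
with constants depending only on $p$ and the UMD$_p$-constant $\beta_{p,X}$.

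Finally, since $\tilde N'$ is independent of the randomness generating $\phi$, I would condition on the latter: for fixed $\om$ the sum $\sum_{i,j}y_{ij}(\om)\tilde N_{ij}'$ is precisely $\tilde N'(\phi(\om,\cdot))$ in the sense of Definition~\ref{def:poisson-norm} applied to the simple function $\phi(\om,\cdot)$, so its $p$-th moment over $\Om'$ equals $\|\phi(\om,\cdot)\|_{\nu_p(\R_+\ti J;X)}^p$; integrating over $\om$ identifies the right-hand side of the last display with $\|\phi\|_{L^p(\Om;\nu_p(\R_+\ti J;X))}^p$. Combining the three displays finishes the argument. The only substantive ingredient is the decoupling inequality, so the main point to check is that the fully independent Poisson copy furnishes a conditionally independent tangent sequence whose conditional moments reproduce the Poisson $p$-norm of Definition~\ref{def:poisson-norm}; the remaining steps are the routine Doob reduction and the conditioning identity.
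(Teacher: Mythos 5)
Your proposal is correct and is essentially the paper's own route: the paper proves Theorem~\ref{thm:UMD} simply by citing the decoupling inequality for UMD spaces (\cite{Nee-ISEM}, \cite{Ver06}), which is exactly the tangent-sequence argument you spell out, combined with the same Doob reduction and the observation that the moment of the decoupled sum is, by Fubini, the $L^p(\Om;\nu_p)$-norm of $\phi$.

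One repair is needed in your second step. The claim that each $y_{ij}$ is $\cF_{(i,j)-1}$-measurable for the filtration \eqref{eqn:filtLexico} fails at block boundaries: for $j=1$ the predecessor is $\cF_{(i,1)-1}=\cF_{(i-1,m)}=\sigma\big(\cF_{t_{i-1}},\tilde N((t_{i-1},t_i]\times A_k),\,k\le m\big)$, which need not contain $\cF_{t_i}$, while $y_{i1}$ is only known to be $\cF_{t_i}$-measurable. This matters because the decoupling theorem you invoke requires the multipliers to be \emph{predictable}; predictability is what makes $(y_{ij}\tilde N_{ij}')$ a decoupled tangent sequence satisfying the conditional independence condition (tangency alone would survive, but the CI property would not). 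The standard fix is to enlarge the filtration by inserting $\cF_{t_i}$ at the start of each block: set $\cG_{(i,0)}:=\cF_{t_i}$ and $\cG_{(i,j)}:=\sigma\big(\cF_{t_i},\tilde N_{ik}:k\le j\big)$ for $j=1,\dots,m$, ordered lexicographically over the index set with $j=0,1,\dots,m$. This is indeed a filtration, since every $\tilde N_{i-1,k}$ is $\cF_{t_i}$-measurable, so $\cG_{(i-1,m)}\subseteq\cG_{(i,0)}$; now each $y_{ij}$ is measurable with respect to the predecessor $\sigma$-algebra, and $\tilde N_{ij}$ remains independent of $\cG_{(i,j-1)}$ because that $\sigma$-algebra is generated by values of $N$ on sets disjoint from $(t_i,t_{i+1}]\times A_j$. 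With this modification your decoupling step, and hence the whole argument, goes through verbatim.
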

Let us denote by $$L_\mathbb{F}^p(\Om;\nu_p(\R_+\times J;X))$$ the closure
in $L^p(\Om;\nu_p(\R_+\times J;X))$ of the space of all simple adapted
processes in $X$. We will see later in Proposition~\ref{prop:partitions}
that if $X$ is UMD, then $L_\mathbb{F}^p(\Om;\nu_p(\R_+\times J;X))$ is a complemented subspace
of $L^p(\Om;\nu_p(\R_+\times J;X))$.
By the theorem, the stochastic integral extends uniquely to a bounded linear
operator from $L_\mathbb{F}^p(\Om;\nu_p(\R_+\times J;X))$ onto a closed linear
subspace of $L^p(\Om;X)$.\par
The reason for calling the presented It\^{o} isomorphism `abstract' is that
the $\nu_p$-norm is still a stochastic object which may be difficult to calculate
in practice. However, the results in Section~\ref{sec:Poisson-SI} identify the
spaces
$\nu_p(E;X)$ in several important cases. As before, $E$ denotes a $\si$-finite measure space.

\begin{example}
\label{exa:nupHilbert}
If $H$ is a Hilbert space, then it follows from Corollary \ref{cor:Hilbert} that
\begin{align*}
\nu_p(E;H) = \left\{
\begin{aligned}
L^2(E;H)\cap L^p(E;H), & \quad  2\leq p<\infty;\\
L^2(E;H) + L^p(E;H),   & \quad  1<p\le 2;
\end{aligned}\right.
\end{align*}
with equivalent norms. Indeed, let $\hat{N}$ be a Poisson random measure on $\R_+\ti E$. For any $B \in \Y$, we let $N_*(B):=\hat{N}((0,1]\ti B)$. Clearly, if $f:E\rightarrow H$ is simple, then
$$\int_{E} f \ d\tilde{N} \qquad \mathrm{and} \qquad \int_{\R_+ \ti E} \one_{(0,1]} \ot f \ d\tilde{N_*}$$
are identically distributed. Therefore, if $p\geq 2$ then Corollary \ref{cor:Hilbert} (with $J=E$) implies that
$$\|f\|_{\nu_p(E;H)} \eqsim_p \|\one_{(0,1]} \ot f\|_{\cD^p_{2,H}\cap\cD^p_{p,H}} = \|f\|_{L^2(E;H)\cap L^p(E;H)}.$$
If $1<p\leq 2$, then we obtain
$$\|f\|_{\nu_p(E;H)} \eqsim_{p} \|\one_{(0,1]} \ot f\|_{\cD^p_{2,H} + \cD^p_{p,H}} \leq \|f\|_{L^2(E;H) + L^p(E;H)}.$$
The reverse estimate follows by the duality argument in the proof of Theorem~\ref{thm:cotypesBanach}.
\end{example}

\begin{example}\label{ex:Lq}
Let $X = L^q(S)$ with $1<q<\infty$. By remarks similar to the ones made in Example~\ref{exa:nupHilbert},
Theorem \ref{thm:summarySILqPoisson} implies that $\nu_p(E;L^q(S))$ is given by
\begin{align*}
L^q(S;L^2(E))\cap L^q(E;L^q(S)) \cap L^p(E;L^q(S)), & \quad  2\leq q\leq p;\\
L^q(S;L^2(E)) \cap (L^q(E;L^q(S)) + L^p(E;L^q(S))), & \quad  2\leq p\leq q;\\
(L^q(S;L^2(E)) \cap L^q(E;L^q(S))) + L^p(E;L^q(S)), & \quad  p\le 2\leq q;\\
(L^q(S;L^2(E)) + L^q(E;L^q(S))) \cap L^p(E;L^q(S)), & \quad  q\le 2\leq p;\\
L^q(S;L^2(E)) + (L^q(E;L^q(S)) \cap L^p(E;L^q(S))), & \quad  q\leq p\leq 2;\\
L^q(S;L^2(E)) + L^q(E;L^q(S)) + L^p(E;L^q(S)), & \quad  p\leq q\leq 2;
\end{align*}
with equivalent norms.
\end{example}

For a general Banach space $X$, Theorems~\ref{thm:typesBanach} and \ref{thm:cotypesBanach}
imply two continuous inclusions for $\nu_p(E;X)$:
if $X$ has martingale type $1<s\leq 2$, then
\begin{equation}\label{eq:incl-mt}
\left.
  \begin{array}{ll}
    L^s(E;X)  \cap L^p(E;X), & s\leq p \\
    L^s(E;X) +     L^p(E;X), & p\leq s
  \end{array}
\right\}
\hookrightarrow \nu_p(E;X),
\end{equation}
and if $X$ has martingale cotype $2\leq s<\infty$, then
\begin{equation}\label{eq:incl-mc}
\nu_p(E;X))\hookrightarrow
\left\{
  \begin{array}{ll}
    L^s(E;X) \cap L^p(E;X), & s\leq p; \\
    L^s(E;X) +    L^p(E;X), & p\leq s.
  \end{array}
\right.
\end{equation}
In particular, since any UMD space has finite martingale cotype, we see that if $X$ is a UMD space, then every element in $\nu_p(E;X)$ can be identified with
an $X$-valued function on $E$.

In the next section, we shall use Theorem \ref{thm:UMD} to identify the UMD
Poisson stochastic integral as a special instance of the Poisson Skorohod
integral. Then, in the final Section \ref{sec:CO}
we will prove a Clark-Ocone type representation theorem for the UMD Poisson
stochastic integral.

\section{The Malliavin derivative}\label{sec:Malliavin}

In the scalar-valued case there are various ways to extend the classical
Malliavin calculus to the Poisson case. Very complete results can be found in
the recent paper of
Last and Penrose \cite{LaPe11a}, to which we refer the reader for further
references to this extensive subject.

Here we wish to extend the ideas developed in the previous sections into a
vector-valued Poisson Malliavin calculus. For this purpose we shall adopt an
approach
which stays close to the standard approach in the Gaussian case as presented,
for example, in Nualart's book \cite{Nua}, in that we define a Poisson Malliavin
derivative directly in terms of a class of cylindrical functions associated to a
Poisson random measure. In doing so we can essentially follow the lines of
vector-valued Malliavin calculus in the Gaussian case as developed in
\cite{Maa10,MvNCO}.

We consider a probability space $(\Om, \F, \P )$, and a Poisson random measure
$N$ defined on a measurable space $(\YY,\Y)$ with $\sigma$-finite intensity measure $\mu$.
We shall use the notation $[\cdot, \cdot]$ to denote the inner product in $L^2(\YY,\mu)$.

It will be useful to employ the following multi-index notation.
For a tuple $\BB = (B_1, \ldots, B_M )$  in $\Y_\mu \times
\ldots \times \Y_\mu$ we set
\begin{align*}
 N(\BB) := (N(B_1), \ldots, N(B_M) )\,\qquad
 \mu(\BB) := (\mu(B_1), \ldots, \mu(B_M) ).
\end{align*}
Let $\ee_m := (0, \ldots, 0,  1, 0 , \ldots, 0)$ denote the $m$-{th} unit vector
in $\R^M$.
We shall also write $\jj := (j_1, \ldots, j_M)$ and $\kk := (k_1, \ldots, k_M)$,
etc. We use multi-index notation
\begin{align*}
  \kk! := k_1! \cdot \ldots \cdot k_M!\, \qquad
  \rr^\kk := r_1^{k_1} \cdot \ldots \cdot r_M^{k_M}\,
\end{align*}
for $\rr := (r_1, \ldots, r_M) \in \R_+^M$.

First we define a suitable class of functions to work with.
For the remainder of this section we fix an arbitrary Banach space $X$.

\begin{definition}[Cylindrical functions]\label{def:cylindrical}
A {\em cylindrical function} is a random variable $F : \Om \to \R$ of the form
\begin{align} \label{eq:cylindrical}
  F = f(N(\BB)),
\end{align}
where $M \in \N$, $f: \N^M \to \R$, and $\BB = (B_1, \ldots,B_M)$
with $B_1, \ldots, B_M \in \Y_\mu$.
\end{definition}
The real vector space of all cylindrical functions is denoted by $\cC$.
We denote by $\cCX = \cC \otimes X$ the collection of all vector-valued random variables $F : \Om
\to X$ of the form
\begin{align}\label{eq:cylindrical-X}
 F = \sum_{i = 1}^n F_i \ot x_i\,
\end{align}
where $n \geq 1$, $F_i \in \cC$, and $x_i \in X$ for $i =1, \ldots, n$.
The elements of $\cCX$ will be called {\em $X$-valued cylindrical functions}.

\begin{remark}\label{rem:disjoint}
In the sequel, when taking a function $F \in \cC$ of the form
\eqref{eq:cylindrical}, we will always assume (possibly without explicit
mentioning) that the sets $B_1, \ldots, B_M$ are pairwise disjoint. Clearly,
this does not yield any loss of generality.
\end{remark}

\begin{definition}[Malliavin derivative]\label{def:Malliavin-derivative}
For a random variable $F:\Om\rightarrow X$ of the form \eqref{eq:cylindrical-X},
the \emph{Malliavin derivative} $D F : \Om \to L^2(\YY)\otimes X$ is defined by
\begin{align*}
  D F := \sum_{i=1}^n\sum_{m=1}^M \Big( f_i(N(\BB) + \ee_m) - f_i(N(\BB)) \Big) \ot
       (\one_{B_m} \otimes x_i).
\end{align*}
\end{definition}

It is easy to see that this definition does not depend on the particular representation
of $F$.
It should be compared to the one in \cite{LaPe11a},
where an analogous construction was given on Poisson space. In the scalar-valued setting,
an alternative (and equivalent) definition of the Malliavian derivative can be given in terms
of a Fock space construction; for more details we refer to \cite{Lok, NuVi}.

The following identity from Poisson stochastic analysis is well-known in the scalar-valued
case (cf. \cite[Eq. (I.23)]{DKW}).
For the convenience of the reader we supply a short proof.

\begin{proposition}\label{prop:product}
 For all $F \in \cCX$ and $G \in \cCXs$ we have $\ip{F,G} \in \cC$ and
\begin{align*}
 D\ip{F,G} = \ip{DF, G} + \ip{F, DG} + \ip{DF, DG}.
\end{align*}
\end{proposition}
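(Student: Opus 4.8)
The plan is to reduce everything to a single common representation and then to verify a discrete (Leibniz-with-correction) rule for the forward difference operators underlying $D$. Since $D$ and $\ip{\cdot,\cdot}$ do not depend on the chosen representation (as already observed after Definition~\ref{def:Malliavin-derivative}), I am free to pick convenient ones. First I would invoke Remark~\ref{rem:disjoint} together with the additivity of $N(\om,\cdot)$ to arrange that $F$ and $G$ are built from one and the same tuple $\BB=(B_1,\ldots,B_M)$ of pairwise disjoint sets in $\Y_\mu$: collecting the finitely many sets occurring in the representations of $F$ and $G$, one passes to a finite partition of their union into disjoint finite-measure sets and rewrites each $N(B)$ as a sum of the $N(B_m)$. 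After this reduction I may write $F=f(N(\BB))$ and $G=g(N(\BB))$ with $f:\N^M\to X$ and $g:\N^M\to X^*$ taking values in finite-dimensional subspaces. Then $\ip{F,G}=h(N(\BB))$ with $h(\kk)=\ip{f(\kk),g(\kk)}$, a scalar function of $N(\BB)$; this already settles the first claim that $\ip{F,G}\in\cC$.

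For the identity I would introduce the forward difference operators $\Delta_m$ acting on functions on $\N^M$ by $\Delta_m u(\kk)=u(\kk+\ee_m)-u(\kk)$, so that by the very definition of the Malliavin derivative
$$DF=\sum_{m=1}^M \one_{B_m}\ot(\Delta_m f)(N(\BB)),\qquad DG=\sum_{m=1}^M \one_{B_m}\ot(\Delta_m g)(N(\BB)),$$
and likewise $D\ip{F,G}=\sum_{m=1}^M \one_{B_m}\ot(\Delta_m h)(N(\BB))$. The heart of the matter is the pointwise identity, valid at each $\kk\in\N^M$ and each $m$,
$$\Delta_m\ip{f,g}(\kk)=\ip{\Delta_m f(\kk),g(\kk)}+\ip{f(\kk),\Delta_m g(\kk)}+\ip{\Delta_m f(\kk),\Delta_m g(\kk)}.$$
This follows by setting $a=f(\kk)$, $a'=f(\kk+\ee_m)$, $b=g(\kk)$, $b'=g(\kk+\ee_m)$ and expanding the right-hand side by bilinearity of the duality pairing: one computes $\ip{a'-a,b}+\ip{a,b'-b}+\ip{a'-a,b'-b}=\ip{a',b'}-\ip{a,b}$, the mixed terms $\ip{a',b}$ and $\ip{a,b'}$ cancelling, and $\ip{a',b'}-\ip{a,b}$ is exactly $\Delta_m\ip{f,g}(\kk)$.

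Finally I would substitute this identity into the expression for $D\ip{F,G}$ and regroup the three resulting sums. The first two, $\sum_m\one_{B_m}\ip{\Delta_m f,g}(N(\BB))$ and $\sum_m\one_{B_m}\ip{f,\Delta_m g}(N(\BB))$, are by definition $\ip{DF,G}$ and $\ip{F,DG}$. The only step requiring care is the cross term, and this is where I expect the one genuinely Poisson-specific point to lie: by definition $\ip{DF,DG}$ is a priori the double sum $\sum_{m,m'}\ip{\Delta_m f,\Delta_{m'}g}(N(\BB))\,\one_{B_m}\one_{B_{m'}}$, but the pairwise disjointness of the $B_m$ gives $\one_{B_m}\one_{B_{m'}}=\d_{mm'}\one_{B_m}$, so this collapses to the single diagonal sum $\sum_m\ip{\Delta_m f,\Delta_m g}(N(\BB))\,\one_{B_m}$ produced by the pointwise rule. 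This collapsing of the cross term is the mechanism that produces the extra second-order contribution $\ip{DF,DG}$ absent from the classical Gaussian Leibniz rule; once it is in place, the identity $D\ip{F,G}=\ip{DF,G}+\ip{F,DG}+\ip{DF,DG}$ follows, and everything else is routine bookkeeping.
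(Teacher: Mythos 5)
Your proof is correct and follows essentially the same route as the paper's: reduce to a single common tuple of pairwise disjoint sets, apply a discrete Leibniz-type rule for the forward differences, and use disjointness of the $B_m$ to identify the quadratic correction term $\ip{DF,DG}$ with a diagonal sum. The only cosmetic differences are that you keep the computation vector-valued with the symmetric three-term difference identity, while the paper first reduces to scalar $F,G\in\cC$ and uses the asymmetric rule $\Delta_m(fg)=(\Delta_m f)\,g(\cdot+\ee_m)+f\,\Delta_m g$ before regrouping --- the underlying algebra is identical.
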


\begin{proof}
It suffices to show that
\begin{align*}
D(FG) = (DF)G + F(DG) + DF\cdot DG
\end{align*}
for all $F,G \in \cC$.
We may thus assume that $F, G \in \cC$ are of the form \eqref{eq:cylindrical}
with $B_1, \ldots, B_M$ pairwise disjoint, and that $\bigcup_{m = 1}^M B_m =
\YY$,
say
\begin{equation*}
\begin{aligned}
  F & = f(N(\BB)) =  f(N(B_1), \ldots, N(B_M)),\\
  G & = g(N(\BB)) =  g(N(B_1), \ldots, N(B_M)).
\end{aligned}
\end{equation*}
Then
\begin{align*}
D(FG) &  = \sum_{m=1}^M \Big(fg(N(\BB) + \ee_m) - fg(N(\BB)) \Big) \ot
\one_{B_m} \\
& = \sum_{m=1}^M \Big(\big[ f(N(\BB) + \ee_m) - f(N(\BB))\big] g(N(\BB)+\ee_m)
\Big) \ot \one_{B_m} \\
& \qquad + \sum_{m=1}^M \Big( f(N(\BB))\big[ g(N(\BB)+\ee_m) - g(N(\BB))
\big]\Big) \ot \one_{B_m} \\
& = \sum_{m=1}^M \Big(\sum_{n=1}^M \big[ f(N(\BB) + \ee_n) - f(N(\BB))\big]
g(N(\BB)+\ee_m) \Big) \ot \one_{B_m}\one_{B_n} \\
& \qquad + \sum_{m=1}^M \Big( f(N(\BB))\sum_{n=1}^M\big[ g(N(\BB)+\ee_n) -
g(N(\BB)) \big]\Big) \ot \one_{B_m}\one_{B_n} \\
& = \Big(\sum_{m=1}^M  g(N(\BB)+\ee_m)  \ot \one_{B_m}\Big)\cdot DF +
\Big(\sum_{m=1}^M  f(N(\BB)) \ot \one_{B_m}\Big)\cdot DG \\
& = \Big(\sum_{m=1}^M  [g(N(\BB)+\ee_m)-g(N(\BB)) ] \ot \one_{B_m}\Big)\cdot DF
\\
& \qquad +  \sum_{m=1}^M  g(N(\BB)) \ot \one_{B_m}\cdot DF + \sum_{m=1}^M
f(N(\BB)) \ot \one_{B_m}\cdot DG
\\ & =  DG\cdot DF + G(DF) + (DG)F.\end{align*}
\end{proof}

The essential ingredient to prove closability of the Malliavin derivative is an integration by parts formula:

\begin{proposition}[Integration by parts]\label{prop:ibp}
For $F \in \cCX$ and $B \in \Y_\mu$ we have
\begin{align*}
  \E \int_B DF \dd \mu = \E\big( \hN(B)F  \big).
\end{align*}
\end{proposition}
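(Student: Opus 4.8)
The plan is to reduce the identity to a single scalar cylindrical function by linearity, then compute both sides explicitly using the structure of the Poisson random measure and the fact that the sets $B_1,\ldots,B_M$ can be taken pairwise disjoint. First I would write $F = \sum_i F_i \ot x_i$ with each $F_i = f_i(N(\BB))$, and observe that by linearity of the integral, of $\E$, and of $\hN$, it suffices to prove the scalar version
\begin{align*}
\E\int_B Df(N(\BB)) \dd\mu = \E\big(\hN(B) f(N(\BB))\big)
\end{align*}
for a single $F = f(N(\BB)) \in \cC$, and then tensor with $x_i$. By refining the partition I may also assume that $B$ is one of the disjoint sets, say $B = B_{m_0}$, or more generally that $B$ is a union of some of the $B_m$; the general $B \in \Y_\mu$ follows by taking $B_1,\ldots,B_M$ to include a piece inside $B$ and adding $B$ to the tuple if needed. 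Since $\int_B \one_{B_m}\dd\mu = \mu(B_m\cap B)$ and the sets are disjoint, the left-hand side collapses to $\sum_m \mu(B_m\cap B)\,\E\big[f(N(\BB)+\ee_m) - f(N(\BB))\big]$.

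The heart of the computation is the identity for the expectation of a Poisson difference quotient. Because the $B_m$ are disjoint, $N(B_1),\ldots,N(B_M)$ are independent Poisson random variables with parameters $\mu(B_m)$, so $\E f(N(\BB))$ is an explicit sum against the product Poisson weights $e^{-\mu(\BB)}\mu(\BB)^{\kk}/\kk!$. The key step is the elementary shift relation for a single Poisson law with parameter $\la$: for any summable $h$,
\begin{align*}
\la\,\E\big[h(P+1)-h(P)\big] = \E\big[(P-\la)h(P)\big],
\end{align*}
where $P$ is Poisson$(\la)$. This is proved by the reindexing $\la\, e^{-\la}\sum_k \tfrac{\la^k}{k!}h(k+1) = e^{-\la}\sum_{k\ge 1}\tfrac{\la^{k}}{(k-1)!}h(k) = \E[P\,h(P)]$, and subtracting $\la\,\E h(P)$. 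Applying this in the $m$-th coordinate (holding the others fixed, which is legitimate by independence and Fubini) gives
\begin{align*}
\mu(B_m\cap B)\,\E\big[f(N(\BB)+\ee_m)-f(N(\BB))\big] = \E\big[(N(B_m)-\mu(B_m))\tfrac{\mu(B_m\cap B)}{\mu(B_m)}\,f(N(\BB))\big],
\end{align*}
and when $B$ is a union of some of the $B_m$ (so $\mu(B_m\cap B)$ equals either $\mu(B_m)$ or $0$), summing over $m$ yields exactly $\E\big[\big(\sum_{B_m\subseteq B}\tilde N(B_m)\big) f(N(\BB))\big] = \E\big[\hN(B) f(N(\BB))\big]$.

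The main obstacle, and the only place where care is genuinely needed, is the reduction to the case where $B$ is a union of the defining sets $B_m$. For a general $B \in \Y_\mu$ the quantity $\hN(B)$ appearing on the right-hand side need not be measurable with respect to $\si(N(\BB))$, so one must first enlarge the family $\{B_m\}$: replace $B_m$ by $B_m\cap B$ and $B_m\setminus B$, and adjoin $B$ (or $B\setminus\bigcup_m B_m$) to the tuple, rewriting $f$ as a function that ignores the new coordinates. After this refinement every relevant set is a disjoint union of atoms and $B$ itself is such a union, so the scalar shift identity applies coordinatewise and the cross terms vanish by independence together with $\E[\tilde N(B_m)] = 0$. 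Once this bookkeeping is in place the result is immediate; I would present the refinement explicitly and then invoke the one-dimensional Poisson identity, tensoring back with the $x_i$ at the very end to recover the stated $X$-valued formula.
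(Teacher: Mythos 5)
Your proposal is correct and is essentially the paper's own argument: the paper likewise splits each $B_m$ into $J_m = B_m\cap B$ and $K_m = B_m\setminus B$, performs the Poisson reindexing ($j_m \mapsto j_m - 1$ in the multi-index sum) that is exactly your one-dimensional identity $\lambda\,\E[h(P+1)-h(P)]=\E[(P-\lambda)h(P)]$, and disposes of the remainder $R = B\setminus\bigcup_m B_m$ via independence and $\E\hN(R)=0$. The only difference is organizational---you factor the reindexing out as a scalar lemma applied coordinatewise (which tacitly uses the representation-independence of $DF$ under refinement of the tuple, asserted after Definition~\ref{def:Malliavin-derivative}), whereas the paper runs it as a single inline multi-index computation.
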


\begin{proof}
It suffices to consider the scalar-valued case. Thus let $F\in \cC$ be of the form \eqref{eq:cylindrical} with $B_1, \ldots,
B_M$ pairwise disjoint. Set $J_m := B_m \cap B$ and $K_m  := B_m \setminus B$
and write $\JJ := ( J_1, \ldots, J_M )$ and $\KK := ( K_1, \ldots, K_M )$.
Using that the random variables $N(J_1), \ldots, N(J_M), N(K_1), \ldots, N(K_M)$ are independent
and Poisson distributed, we obtain
\begin{align*}
&  \E \int_B DF \dd \mu
\\& = \sum_{m=1}^M  [\one_{B_m}, \one_{B}]  \E
   \Big( f(N(\BB) + \ee_m) - f(N(\BB)) \Big)
\\& = \sum_{m=1}^M  \mu({B_m}\cap{B})  \E
   \Big( f(N(\JJ) + N(\KK) + \ee_m) - f(N(\JJ) + N(\KK)) \Big)
\\&   = \sum_{m=1}^M \mu(J_m)
\exp\Big( -\sum_{l=1}^M \mu(J_l) + \mu(K_l) \Big)
\\& \ \ \times
\sum_{\jj,\kk \in \N^M} \frac{\mu(\JJ)^{\jj}}{\jj!} \frac{\mu(\KK)^{\kk}}{\kk!}
    \Big( f(\jj + \kk + \ee_m) - f(\jj + \kk ) \Big)
\\&   = \sum_{m=1}^M
\exp\Big( -\sum_{l=1}^M \mu(J_l) + \mu(K_l) \Big)
\\& \ \  \times\!\!\!
\sum_{\jj,\kk \in \N^M}\!\! \Big(
 \frac{\mu(\JJ)^{\jj + \ee_m}}{(\jj + \ee_m)!} \frac{\mu(\KK)^{\kk}}{\kk!}
    (j_m +1)  f(\jj + \kk + \ee_m) )
-\frac{\mu(\JJ)^{\jj}}{\jj!} \frac{\mu(\KK)^{\kk}}{\kk!}
  \mu(J_m)   f(\jj + \kk)\Big).
\end{align*}
Replacing $j_m$ by $j_m - 1$ in the first summation, one obtains
\begin{align*}
&  \E \int_B DF \dd \mu
\\&   = \sum_{m=1}^M
\exp\Big( -\sum_{l=1}^M \mu(J_l) + \mu(K_l) \Big)
\\& \quad \times
\sum_{\jj,\kk \in \N^M} \Big(
 \frac{\mu(\JJ)^{\jj}}{\jj!} \frac{\mu(\KK)^{\kk}}{\kk!}
    j_m  f(\jj + \kk) )
-\frac{\mu(\JJ)^{\jj}}{\jj!} \frac{\mu(\KK)^{\kk}}{\kk!}
  \mu(J_m)   f(\jj + \kk)
\Big)
\\&   = \E \sum_{m=1}^M
 \Big( N(J_m) f(N(\JJ) + N(\KK)) )
- \mu(J_m)   f(N(\JJ) + N(\KK))
\Big)
\\&   = \E \sum_{m=1}^M
 \hN(J_m) f(N(\BB) )
\\&   = \E
 \hN\Big(\bigcup_{m=1}^M J_m\Big) f(N(\BB) )
.
\end{align*}
On the other hand, since $R := B \setminus \bigcup_{m=1}^M J_m$ does not
intersect $\bigcup_{m=1}^M B_m$, it follows that $N(R)$ and $f(N(\BB))$ are
independent, hence
\begin{align*}
 \E \big( \hN(R) f(N(\BB)) \big)
   =  \E \hN(R) \; \E f(N(\BB))
   = 0.
\end{align*}
We infer that
\begin{align*}
  \E \int_B DF \dd \mu
    = \E
 \hN\Big(R \cup \bigcup_{m=1}^M J_m\Big) f(N(\BB) )
    = \E
 (\hN(B) F) .
\end{align*}
\end{proof}

\begin{theorem}[Closability] The operator $D: \cC\ot X \to \cC\otimes L^2(\YY)\ot X$
is closable from $L^p(\Omega;X)$ into $L^p(\Omega;\nu_p(\YY;X))$ for all $1<
p<\infty$.
\end{theorem}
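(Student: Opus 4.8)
The plan is to derive closability from the integration by parts formula of Proposition~\ref{prop:ibp} together with the product rule of Proposition~\ref{prop:product}. First I would reduce to the scalar case $X=\R$. Indeed, $D$ is $X$-linear, so $\ip{DF,x^*}=D\ip{F,x^*}$ for every $x^*\in X^*$, and since $f\mapsto\hN(f)$ is isometric from $\nu_p(\YY;X)$ into $L^p(\Om;X)$, the map $f\mapsto\ip{f,x^*}$ is bounded from $\nu_p(\YY;X)$ to $\nu_p(\YY;\R)$. Thus, if $F_n\to 0$ in $L^p(\Om;X)$ and $DF_n\to\Phi$ in $L^p(\Om;\nu_p(\YY;X))$, then for each $x^*$ we have $\ip{F_n,x^*}\to 0$ in $L^p(\Om)$ and $D\ip{F_n,x^*}=\ip{DF_n,x^*}\to\ip{\Phi,x^*}$ in $L^p(\Om;\nu_p(\YY;\R))$. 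Scalar closability would give $\ip{\Phi,x^*}=0$ for all $x^*$, and the isometry property of $\hN$ then forces $\Phi=0$.

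Next I would establish, for scalar $F,G\in\cC$ and $B\in\Y_\mu$, the identity
\begin{equation*}
\E\int_B (G+DG)\,DF\dd\mu = \E(FH), \qquad H:=\hN(B)G-\int_B DG\dd\mu,
\end{equation*}
obtained by applying Proposition~\ref{prop:ibp} to $GF\in\cC$ and expanding $D(GF)=(DG)F+G(DF)+(DG)(DF)$ via Proposition~\ref{prop:product}; the three resulting terms rearrange into the displayed form. Because $G$ is cylindrical, $H\in L^{p'}(\Om)$, so the right-hand side is continuous in $F$ with respect to the $L^p(\Om)$-norm. I would also record that the bilinear pairing $(\Lambda,u)\mapsto\E\int_\YY\Lambda u\dd\mu$ satisfies $\bigl|\E\int_\YY\Lambda u\dd\mu\bigr|\le \|\Lambda\|_{L^p(\Om;\nu_p(\YY;\R))}\|u\|_{L^{p'}(\Om;\nu_{p'}(\YY;\R))}$; this follows from the elementary bound $\bigl|\int_\YY fg\dd\mu\bigr|\le\|f\|_{\nu_p(\YY;\R)}\|g\|_{\nu_{p'}(\YY;\R)}$ for simple $f,g$ (a consequence of $\int_\YY fg\dd\mu=\E(\hN(f)\hN(g))$ and Hölder's inequality) applied for fixed $\om$ and followed by Hölder in $\om$. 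The simple process $u=(G+DG)\one_B$, having cylindrical coefficients, belongs to $L^{p'}(\Om;\nu_{p'}(\YY;\R))$.

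With these in hand I would apply the identity to $F=F_n$ and let $n\to\infty$: the right-hand side tends to $0$, while the left-hand side converges to $\E\int_B(G+DG)\Phi\dd\mu$, so that $\E\int_B(G+DG)\Phi\dd\mu=0$ for all $G\in\cC$ and $B\in\Y_\mu$. The crucial point is that the ``add-a-point'' map $G\mapsto G+DG$ is surjective onto cylindrical coefficients on each set: taking $G=g(N(\BB))$ and $B=B_m$, one has $(G+DG)\one_{B_m}=g(N(\BB)+\ee_m)\one_{B_m}$, and $g(\jj+\ee_m)=h(\jj)$ can be solved for $g$ for any prescribed $h$. Hence the test processes $(G+DG)\one_B$ exhaust all $c\one_B$ with $c$ cylindrical and $B\in\Y_\mu$, giving $\E(c\int_B\Phi\dd\mu)=0$. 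As cylindrical functions are dense in $L^{p'}(\Om)$, this yields $\int_B\Phi\dd\mu=0$ almost surely for every $B\in\Y_\mu$. Finally, since $\R$ has martingale cotype $2$, the inclusion \eqref{eq:incl-mc} identifies $\nu_p(\YY;\R)$ with a space of genuine functions on $\YY$; therefore, for almost every $\om$, the function $\Phi(\om,\cdot)$ integrates to zero over every $B\in\Y_\mu$ and so vanishes $\mu$-almost everywhere. This gives $\Phi=0$.

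The main obstacle is the product-rule correction term $(DG)(DF)$, which has no Gaussian counterpart and obstructs the usual route of reading off a divergence operator directly from integration by parts. I expect the resolution above---absorbing this term into the factor $G+DG$ and then exploiting the surjectivity of the add-a-point map so that no test functions are lost---to be the delicate part; the remaining ingredients (boundedness of the pairing and the function-space identification of the scalar $\nu_p$-norm) are routine.
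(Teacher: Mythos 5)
Your argument is correct in its essentials, but it follows a genuinely different---and in one respect more careful---route than the paper. The paper's proof uses only the integration by parts formula of Proposition~\ref{prop:ibp}, tested against the \emph{deterministic} functionals $\one_B\otimes x^*$: it deduces $\E\int_B\langle G,x^*\rangle\dd\mu=\lim_n\E(\hN(B)\langle F_n,x^*\rangle)=0$ and then invokes the stated fact that $f\in L^p(\Om;Y)$ with $\E\langle f,\zeta\rangle=0$ for all $\zeta$ in a weak$^*$-dense subspace of $Y^*$ must vanish. As literally stated that fact is too weak (any mean-zero, nonzero scalar random variable satisfies the hypothesis with $Y=\R$, $Z=Y^*$); what is really needed is to test against \emph{random} multipliers as well, so that the pairings vanish almost surely and not merely in expectation. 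Your proof supplies precisely this ingredient: combining the product rule (Proposition~\ref{prop:product}) with Proposition~\ref{prop:ibp} to get $\E\int_B(G+DG)\,DF\dd\mu=\E(FH)$ with $H=\hN(B)G-\int_B DG\dd\mu$, and then using the add-a-point identity $(G+DG)\one_{B_m}=g(N(\BB)+\ee_m)\one_{B_m}$ together with the solvability of $g(\jj+\ee_m)=h(\jj)$, is the natural Poisson analogue of the classical Gaussian closability argument, with the correction term $(DF)(DG)$ absorbed into the multiplier $G+DG$. What the paper's route buys is brevity; what yours buys is a self-contained deduction that the limit vanishes almost surely, plus a reusable duality fact ($\int_\YY fg\dd\mu=\E(\hN(f)\hN(g))$, whence $\nu_{p'}(\YY;\R)\hookrightarrow(\nu_p(\YY;\R))^*$).

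Three points in your write-up need small repairs. (i) For a general $B\in\Y_\mu$ a single $G$ cannot realize $(G+DG)\one_B=c\one_B$: the system $g(\jj+\ee_m)=h(\jj)$ for several $m$ simultaneously is overdetermined. Instead, refine the partition so that $B$ is a finite disjoint union of cells of the partition generating $c$, solve on each cell separately, and sum the resulting identities; linearity then gives $\E\big(c\int_B\Phi\dd\mu\big)=0$. (ii) Lemma~\ref{lem:cCdense} gives density of $\cC$ in $L^{p'}(\Om,\cG)$, not in $L^{p'}(\Om)$; this still suffices because $\int_B\Phi\dd\mu$ is $\cG$-measurable, being the $L^p$-limit of the $\cG$-measurable variables $\int_B DF_n\dd\mu$. (iii) The final interchange of quantifiers, from ``for every $B$, a.s.'' to ``a.s., for every $B$,'' is not automatic for the uncountable family $\Y_\mu$; since $\Phi$ is strongly measurable it is essentially separably valued in the function space given by \eqref{eq:incl-mc}, so one can extract a countable norming family of indicators and work with a single null set. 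Finally, since Definition~\ref{def:cylindrical} allows unbounded $f$, you should take your test functions $G$ to be \emph{bounded} cylindrical functions (this costs nothing and keeps $H$ and $(G+DG)\one_B$ in the required $L^{p'}$ spaces); the same caveat applies to your claim that $H\in L^{p'}(\Om)$ ``because $G$ is cylindrical.''
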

\begin{proof}
The proof is based upon the fact that if $Y$ is a Banach space, $Z\subseteq Y^*$
is a weak$^*$-dense linear subspace,
and $f\in L^p(\Omega;Y)$ is a function
such that $\E \lb f,\zeta\rb = 0$ for all $\zeta\in Z$,
then $f = 0$ in $L^p(\Omega;Y)$. We apply this to $Y  = \nu_p(\YY;X)$ and $Z$
the linear span of
the $X^*$-valued indicator functions $\one_B\otimes x^*$ with $B\in \Y_\mu$
and $x^*\in X^*$.

Fix $1< p<\infty$ and let $(F_n)$ be a sequence in $\cCX$ such that $F_n \to 0$
in
$L^p(\Om;X)$ and $DF_n \to G$ in $L^p(\Om;\nu_p(\YY;X))$ as $n\to\infty$.
We must prove that $G=0.$

For each $B\in \Y_\mu$ and $x^*\in X^*$,
using Proposition \ref{prop:ibp} and the fact that $\hN(B)\in L^q(\Om)$,
$\frac1p+\frac1q=1$, we obtain
 \begin{align*}
 \E \int_B \lb G,x^*\rb \dd \mu
&  = \lim_{n \to \infty}  \E \int_B \lb DF_n,x^*\rb  \dd \mu
 = \lim_{n \to \infty} \E (\hN(B){\lb F_n, x^*\rb}) =0.
 \end{align*}
This being true for all $B\in\Y_\mu$ and $x^*\in X^*$, we conclude that $G =
0$.
\end{proof}

By routine arguments one establishes the following density result.
We denote by $$\cG := \sigma(\tilde N(B): \ B\in \Y_\mu\}$$ the $\sigma$-algebra
in $\Om$ generated by $\tilde N$.

\begin{lemma}\label{lem:cCdense}
 For all $1\le p<\infty$, $\cCX$ is dense in $L^p(\Om,\cG;X)$.
\end{lemma}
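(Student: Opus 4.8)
The plan is to reduce the vector-valued statement to its scalar counterpart and then to prove scalar density by a functional monotone class argument.

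First I would reduce to the scalar case. The simple $X$-valued $\cG$-measurable functions $\sum_i \one_{A_i}\otimes x_i$ (with $A_i\in\cG$ and $x_i\in X$) are dense in $L^p(\Om,\cG;X)$ by the standard approximation theorem for Bochner spaces. Hence it suffices to approximate each indicator $\one_A$, $A\in\cG$, in $L^p(\Om,\cG)$ by scalar cylindrical functions from $\cC$; tensoring the resulting approximants with the fixed vectors $x_i$ then produces elements of $\cCX=\cC\otimes X$ approximating $\sum_i \one_{A_i}\otimes x_i$. So the whole statement follows once I know that $\cC$ is dense in $L^p(\Om,\cG)$.

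For the scalar density I would argue as follows. Let $\mathcal{M}$ denote the set of bounded cylindrical functions $f(N(\BB))$. Taking $f(k)=\min(k,n)$ shows each $N(B)\wedge n$ lies in $\mathcal{M}$, so $N(B)=\lim_n N(B)\wedge n$ is $\sigma(\mathcal{M})$-measurable; together with the obvious inclusion $\sigma(\mathcal{M})\subseteq\cG$ this gives $\sigma(\mathcal{M})=\cG$. Moreover $\mathcal{M}$ is a vector space, contains the constants, and is closed under multiplication: given $f(N(\BB))$ and $g(N(\BB'))$, one passes to a common refinement of the two finite families into a single pairwise disjoint family (cf.\ Remark~\ref{rem:disjoint}), with respect to which both factors, and hence their product, are bounded cylindrical functions. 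I would then let $\mathcal{H}$ be the set of bounded $\cG$-measurable functions that are $L^p(\Om)$-limits of sequences from $\mathcal{M}$. Dominated convergence shows that $\mathcal{H}$ is a vector space containing $\mathcal{M}$ and closed under bounded pointwise limits of uniformly bounded increasing sequences, so the functional monotone class (multiplicative system) theorem yields that $\mathcal{H}$ contains every bounded $\sigma(\mathcal{M})=\cG$-measurable function. Truncating an arbitrary $f\in L^p(\Om,\cG)$ and applying dominated convergence, bounded $\cG$-measurable functions are dense in $L^p(\Om,\cG)$, and hence so is $\cC$.

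The only point requiring a little care is that $\cG$ need not be countably generated, since $\Y_\mu$ may be uncountable; this rules out a naive appeal to sequential martingale convergence. The functional monotone class theorem avoids the issue entirely, as it makes no countability assumption. Alternatively, one could observe that the $\sigma$-algebras $\cG_\pi:=\sigma(N(B):B\in\pi)$, indexed by the finite subsets $\pi$ of $\Y_\mu$, form an upward-directed family with $\cG=\sigma(\bigcup_\pi\cG_\pi)$, and invoke $L^p$-convergence of the conditional expectations $\E[\,\cdot\mid\cG_\pi]$ along this net; since every $\cG_\pi$-measurable function is cylindrical, this again gives the claim. Beyond this bookkeeping I expect no genuine obstacle, consistent with the assertion that the result follows by routine arguments.
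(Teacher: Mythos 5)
The paper offers no proof of this lemma at all; it is introduced with the remark ``By routine arguments one establishes the following density result,'' so there is no argument of the authors' to compare yours against. Your proposal is a correct and complete instance of exactly such a routine argument: the reduction to the scalar case via simple Bochner functions is standard, the common-refinement step makes the bounded cylindrical functions a multiplicative class, and the functional monotone class theorem (or, equivalently, your net-of-conditional-expectations variant, which avoids any countability issue with $\Y_\mu$) then gives density of $\cC$ in $L^p(\Om,\cG)$. The only point worth making explicit is the one you use tacitly when writing $\sigma(\mathcal{M})\subseteq\cG$ and its reverse: since $\tilde N(B)=N(B)-\mu(B)$ with $\mu(B)$ deterministic, $\sigma(N(B):B\in\Y_\mu)=\sigma(\tilde N(B):B\in\Y_\mu)=\cG$, so cylindrical functions (which are built from $N$, not $\tilde N$) do generate and are measurable with respect to $\cG$.
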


Thanks to this lemma, the closure of $D$ is densely defined as an operator
from $L^p(\Om,\cG;X)$ into $L^p(\Om;\nu_p(E;X))$.
With slight abuse of notation we will denote this closure by $D$ again,
or, if we want to be more precise, by $D_p^X$.
The dense domain of this closure in $L^p(\Om,\cG;X)$ is denoted by
$\dD^{1,p}(\Om;X).$ This is a Banach space endowed with the norm
 \begin{align*}
 \|F\|_{\dD^{1,p}(\Om;X)} := \Big( \| F \|_{L^p(\Om;X)}^p
           + \| DF \|_{L^p(\Om;\nu_p(\YY;X))}^p \Big)^{\frac1p}.
 \end{align*}

\subsection{The Skorohod integral}
As in the Gaussian case, the adjoint of the Poisson Malliavin derivative extends
the It\^o stochastic integral in a natural way. This anticipative extension
of the stochastic integral, the
so-called Skorohod integral, has been studied, in the Poisson setting, by
many authors \cite{CaPa, DKW, Kab, LaPe11a, NuVi}. Here we will show that if $X$ is
a UMD space, the adjoint of the operator $D$ introduced above extends the
It\^o integral of Section \ref{sec:UMD}.

From now on we assume that $X$ is a UMD space.
We begin by defining the divergence operator as the adjoint of the Malliavin derivative.
It thus acts on random variables taking
values in the dual space of $\nu_p(\YY;X)$.

\begin{definition}
Let $1 < p < \infty$ satisfy $\frac1p + \frac1q = 1$. The \emph{divergence operator}
\begin{align*}
\delta = \delta_q^{X^*} : L^q(\Om;\nu_q(\YY;X^*) )
  \to  L^q( \Om,\cG; X^* )
\end{align*}
is defined to be the adjoint of the operator
\begin{align*}
D = D_p^{X} : L^p(\Om,\cG;X) \to L^p(\Om;\nu_p(\YY;X)).
\end{align*}
\end{definition}

A word of explanation is needed at this point. The mapping
$$\sum_{n=1}^N \one_{B_n}\otimes x_n\mapsto
\sum_{n=1}^N \tilde N(B_n)\otimes x_n,$$ where the sets $B_n\in\Y_\mu$ are disjoint,
identifies $\nu_p(E;X)$ isometrically with a closed subspace of $L^p(\Om;X)$.
With this identification, $D = D_p^{X}$ defines a densely defined and closed linear
operator from $ L^p(\Om,\cG;X)$ into $ L^p(\Om;L^p(\Om;X))$.
Since $X$ is UMD and therefore reflexive, the duals of $L^p(\Om,\cG;X)$ and $ L^p(\Om;L^p(\Om;X))$ may be
identified with $L^q(\Om,\cG;X^*)$ and $L^q(\Om;L^q(\Om;X^*))$. The adjoint operator
$D^*$ is then a densely defined closed linear operator from $ L^q(\Om;L^q(\Om;X^*))$
to  $L^q(\Om,\cG;X^*)$. We now define $\delta = \delta_q^{X^*}$ as the restriction of
$D^*$ to $L^q(\Om;\nu_q(\YY;X^*) )$. That this restricted
operator is again densely defined will follow from the next lemma.

\begin{lemma} \label{lem:divelementary}
Let $F = \sum_{i} F_i \ot x_i^* \in \cCXs$ be as in \eqref{eq:cylindrical-X} and
let $B\in \Y_\mu$ be disjoint with the sets used in the representation of
the $F_i$'s.
For all $1<p<\infty$ we have $\one_B F \in \Dom(\d_p^{X^*})$ and
$$ \d(\one_B F) = \hN(B)F.$$
In particular, $\delta$ is densely defined.
 \end{lemma}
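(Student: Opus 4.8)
The plan is to verify the identity directly from the definition of $\delta$ as the restriction of the adjoint $D^*=(D_p^X)^*$, using the integration by parts formula (Proposition~\ref{prop:ibp}) and the product rule (Proposition~\ref{prop:product}). Recall that under the isometric embedding $h\mapsto\hN(h)$, which realizes $\nu_r(\YY;\,\cdot\,)$ inside $L^r(\Om;\,\cdot\,)$, the operator $D$ maps $L^p(\Om,\cG;X)$ into $L^p(\Om;L^p(\Om;X))$ and $\delta$ is the restriction of $D^*$ to $L^q(\Om;\nu_q(\YY;X^*))$. Since $\cCX$ is a core for $D$ (Lemma~\ref{lem:cCdense}), it suffices to produce $g\in L^q(\Om,\cG;X^*)$ with $\lb DG,\one_B F\rb=\lb G,g\rb$ for every $G\in\cCX$, and to identify $g=\hN(B)F$. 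That $g:=\hN(B)F$ lies in $L^q(\Om,\cG;X^*)$ is immediate: both factors are $\cG$-measurable, and since $B$ is disjoint from the sets representing $F$, the variables $\hN(B)$ and $F$ are independent, so $\n\hN(B)F\n_{L^q(\Om;X^*)}=\n\hN(B)\n_{L^q(\Om)}\n F\n_{L^q(\Om;X^*)}<\infty$.

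For the pairing I would write $G=\sum_i G_i\ot x_i$ and $F=\sum_{i'}F_{i'}\ot x_{i'}^*$ with $G_i,F_{i'}\in\cC$, let $\BB=(B_1,\dots,B_M)$ be the pairwise disjoint sets representing $G$, and set $\Delta_m G_i:=g_i(N(\BB)+\ee_m)-g_i(N(\BB))$, so that $DG=\sum_{i,m}\Delta_m G_i\ot(\one_{B_m}\ot x_i)$. Evaluating $\lb DG,\one_B F\rb$ as a pairing of $L^p(\Om;L^p(\Om;X))$ against $L^q(\Om;L^q(\Om;X^*))$, the inner expectation acts only on the factors $\hN(B_m)$ and $\hN(B)$ and produces the covariance $\E[\hN(B_m)\hN(B)]=\mu(B_m\cap B)$ of the compensated measure. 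What survives is
\[
\lb DG,\one_B F\rb=\sum_{i,i'}\E\Big[\Big(\int_B DG_i\dd\mu\Big)F_{i'}\Big]\lb x_i,x_{i'}^*\rb,
\qquad \int_B DG_i\dd\mu=\sum_m\Delta_m G_i\,\mu(B_m\cap B).
\]
The delicate bookkeeping here is to keep the randomness of the embedding separate from that of the coefficients, so that only the covariance survives.

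To match this with $\lb G,\hN(B)F\rb=\sum_{i,i'}\E[\hN(B)G_iF_{i'}]\lb x_i,x_{i'}^*\rb$, I would apply the product rule to the scalar cylindrical functions $G_i$ and $F_{i'}$,
\[
D(G_iF_{i'})=(DG_i)F_{i'}+G_i(DF_{i'})+DG_i\cdot DF_{i'}.
\]
Because $B$ is disjoint from the sets representing $F$, the function $DF_{i'}$ is supported off $B$, so integrating over $B$ annihilates the last two terms and leaves $\int_B D(G_iF_{i'})\dd\mu=(\int_B DG_i\dd\mu)F_{i'}$. Taking expectations and applying Proposition~\ref{prop:ibp} to the cylindrical function $G_iF_{i'}$ gives $\E[(\int_B DG_i\dd\mu)F_{i'}]=\E[\hN(B)G_iF_{i'}]$; summing over $i,i'$ yields $\lb DG,\one_B F\rb=\lb G,\hN(B)F\rb$. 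This is the claimed formula $\delta(\one_B F)=\hN(B)F$, and since $\one_B F\in L^q(\Om;\nu_q(\YY;X^*))$ it also shows $\one_B F\in\Dom(\delta)$.

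Finally, to see that $\delta$ is densely defined I would show that the functions $\one_B F$ span a dense subspace of $L^q(\Om;\nu_q(\YY;X^*))$, for which it suffices to approximate elementary tensors $H\ot(\one_B\ot x^*)$ with $H\in\cC$. The only obstruction is that $H$ may depend on $N$ over $B$; I would remove it by partitioning $B$ into finitely many pieces $C_l$ of small $\mu$-measure and replacing $H$ on each $C_l$ by the cylindrical function obtained by deleting the variable $N(C_l)$, which is then disjoint from $C_l$ and yields an element of $\Dom(\delta)$. The resulting error is supported on the events $\{N(C_l)\ge 1\}$, of probability at most $\mu(C_l)$, and combining this with the moment bounds of Lemma~\ref{lem:PoissonMoments} shows that the error tends to $0$ as the partition is refined. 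I expect this splitting estimate, together with the careful unwinding of the duality pairing through the $\hN$-embedding, to be the two main technical points; the identity itself then follows directly from Propositions~\ref{prop:product} and~\ref{prop:ibp}.
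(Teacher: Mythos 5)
Your proof of the identity $\d(\one_B F)=\hN(B)F$ is correct and is essentially the paper's own proof: test against $G\in\cCX$ (a core for $D$, since $D$ is defined as the closure of its restriction to $\cCX$), use the product rule of Proposition~\ref{prop:product}, note that disjointness forces $\one_B DF=0$ so the two cross terms die after integration over $B$, and finish with Proposition~\ref{prop:ibp}. Your componentwise unwinding of the pairing through the $\hN$-embedding (the covariances $\E[\hN(B_m)\hN(B)]=\mu(B_m\cap B)$) is just an explicit rendering of the bracket $\E\int_E\lb DG,\one_B F\rb\dd\mu$ with which the paper computes.

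The divergence from the paper is in the density claim, which the paper asserts without any argument and you attempt to prove. Your splitting construction (partition $B$ into pieces $C_l$ with $\mu(C_l)\le\eps$ and replace $H$ by the cylindrical function $H_l$ obtained by deleting the variable $N(C_l)$) is the right idea, but the error estimate as you state it does not close when the exponent $q$ of the space $L^q(\Om;\nu_q(\YY;X^*))$ satisfies $q\ge 2$. Indeed (for bounded $h$, to which one may reduce by truncation), each summand $(H-H_l)\ot\one_{C_l}\ot x^*$ has $L^q(\Om;\nu_q)$-norm of order $\mu(C_l)^{2/q}$ — one factor $\mu(C_l)^{1/q}$ from $\P(N(C_l)\ge 1)\le\mu(C_l)$ and one from Lemma~\ref{lem:PoissonMoments} — so summing the per-piece bounds over the $\sim\mu(B)/\eps$ pieces gives $\eps^{2/q-1}\mu(B)$, which is $O(1)$ for $q=2$ and blows up for $q>2$. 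The repair is to estimate the error $\sum_l(H-H_l)\ot\one_{C_l}\ot x^*$ as a whole: conditioning on the coefficients and applying the scalar two-sided bound (Corollary~\ref{cor:Hilbert}, via Example~\ref{exa:nupHilbert}), it suffices to show that
\begin{equation*}
\E\Big(\sum_l|H-H_l|^2\mu(C_l)\Big)^{q/2}\quad\text{and}\quad \sum_l\E|H-H_l|^q\,\mu(C_l)
\end{equation*}
vanish as $\eps\downarrow 0$. The first is at most $(2\|h\|_\infty)^q\,\eps^{q/2}\,\E N(B)^{q/2}$, because $\sum_l\mu(C_l)\one_{\{N(C_l)\ge 1\}}\le\eps\, N(B)$, and the second is at most $(2\|h\|_\infty)^q\,\eps\,\mu(B)$. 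With this correction your argument does establish the density statement — a point the paper itself leaves unproved.
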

\begin{proof}
Let $G\in \cC\ot X$. Using Propositions \ref{prop:product} and \ref{prop:ibp} and the fact that
$\one_{B} DF =0$, we have
\begin{align*}
\E \int_E\lb DG,\one_B F \rb \dd \mu & = \E \int_B ( D\lb G,F\rb - \lb G,DF\rb -
\lb DG,DF\rb)\dd\mu \\
                       & =  \E\int_B  D\lb G,F\rb \dd\mu
                         = \E(\hN(B)\lb G,F\rb).
\end{align*}
\end{proof}
Since $X$ is UMD and therefore reflexive, we can regard the divergence operator as an operator
\begin{align*}
\delta = \delta_p^{X} : L^p(\Om;\nu_p(\YY;X) )
  \to  L^p(\Om,\cG; X ),
\end{align*}
by considering the adjoint of $D_q^{X^*}$ when $\frac1p + \frac1q = 1$ and using
the identification $X = X^{**}$.

We return to the situation considered in Section \ref{sec:Poisson-SI}
and take $$E = \R_+\times J,$$ where $(J,\cJ,\nu)$ is a $\si$-finite measure space.
Let $N$ be a Poisson
random measure on $(\R_+\ti J, \cB(\R_+)\ti \cJ,dt\ti \nu)$.
As before we let $\mathbb{F} = (\cF_t)_{t>0}$ be the filtration generated by $\tilde N$, i.e.,
$$\cF_t =\sigma\{\tilde{N}((s,u]\ti A) \ : \ 0\leq s<u\leq t, \ A \in \cJ\}.$$
We will show that in this setting
the divergence operator $\d$ is an extension of the Poisson stochastic integral
$I= I_p^X: L^p(\Om;\nu_p(\R_+\ti J;X))\to L^p(\Om,\cG;X)$.

We recall from Section \ref{sec:UMD} that $L^p_{\mathbb{F}}(\Om;\nu_p(\R_+\ti J;X))$
denotes the closure of all simple, adapted processes in
$L^p(\Om;\nu_p(\R_+\ti J;X))$.
The following result shows that the divergence
operator $\delta$ coincides with the It\^{o} integral for adapted integrands,
and hence can be viewed as a Skorokhod integral for non-adapted processes.

\begin{theorem}
\label{thm:Skorokhod}
Let $1<p<\infty$ and let $X$ be a UMD space. Then $L^p_{\mathbb{F}}(\Omega;\nu_p(\R_+\ti J;X))$ is
contained in $\mathrm{D}(\delta_p^X)$ and
\begin{equation}
\label{eqn:Skorokhod}
\delta(\phi) = I(\phi) \ \ \hbox{for all } \phi \in L^p_{\mathbb{F}}(\Omega;\nu_p(\R_+\ti J;X)).
\end{equation}
\end{theorem}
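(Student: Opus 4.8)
The plan is to establish the identity $\delta=I$ first on a conveniently small dense subspace and then to propagate it to all of $L^p_{\mathbb{F}}(\Om;\nu_p(\R_+\ti J;X))$ by a closedness argument. Two structural facts make this possible. First, $\delta=\delta_p^X$ is by construction an adjoint operator, hence closed. Second, by Theorem~\ref{thm:UMD} the It\^o integral extends to a bounded operator $I$ on $L^p_{\mathbb{F}}(\Om;\nu_p(\R_+\ti J;X))$. Consequently, if $\cA$ is a linear subspace that is dense in $L^p_{\mathbb{F}}(\Om;\nu_p(\R_+\ti J;X))$ and on which $\delta(\phi)=I(\phi)$, then for $\phi$ in the closure and $\phi_n\in\cA$ with $\phi_n\to\phi$ we have $\delta(\phi_n)=I(\phi_n)\to I(\phi)$ by boundedness of $I$, while $\phi_n\to\phi$; closedness of $\delta$ then yields $\phi\in\Dom(\delta_p^X)$ and $\delta(\phi)=I(\phi)$. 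This simultaneously gives the asserted domain inclusion and the identity \eqref{eqn:Skorokhod}.

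For the subspace I would take $\cA$ to be the linear span of the \emph{elementary adapted processes} $F\one_B\ot x$, where $x\in X$, $B=(s,u]\ti A$ with $0\le s<u$ and $\nu(A)<\infty$ (so that $\mu(B)=(u-s)\nu(A)<\infty$), and $F\in\cC$ is a cylindrical function whose representing sets are contained in $(0,s]\ti J$. Since those representing sets lie in the strict past $(0,s]\ti J$, they are disjoint from $B\subseteq(s,u]\ti J$, which is precisely the hypothesis of Lemma~\ref{lem:divelementary}. Applying that lemma with the roles of $X$ and $X^*$ interchanged (legitimate because $X$ is UMD, hence reflexive, so $X=X^{**}$) gives $\one_B(F\ot x)\in\Dom(\delta_p^X)$ and $\delta(\one_B(F\ot x))=\hN(B)Fx$. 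As $\hN(B)=\tilde N(B)=\tilde N((s,u]\ti A)$ and $F$ is $\cF_s$-measurable, the process $F\one_B\ot x$ is simple and adapted in the sense of Definition~\ref{def:simplePoisson-SI}, and its right-hand side is exactly $I(F\one_B\ot x)$. By linearity, $\delta=I$ on all of $\cA$.

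It remains to show that $\cA$ is dense in $L^p_{\mathbb{F}}(\Om;\nu_p(\R_+\ti J;X))$. By definition of this space the simple adapted processes are dense, so it suffices to approximate a single building block $G\one_{(t_i,t_{i+1}]}\one_{A_j}\ot x$ of a simple adapted process, with $G\in L^\infty(\Om,\cF_{t_i})$, by elements of $\cA$. Because $\cF_{t_i}$ is generated by the increments $\tilde N((r,u]\ti A)$ with $u\le t_i$, a routine approximation (of the type underlying Lemma~\ref{lem:cCdense}, now carried out with sets confined to $(0,t_i]\ti J$) produces cylindrical functions $G_n$ with representing sets in $(0,t_i]\ti J$, uniformly bounded by $\|G\|_\infty$, such that $G_n\to G$ in $L^p(\Om)$. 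Writing $B_{ij}=(t_i,t_{i+1}]\ti A_j$ and using the homogeneity of the Poisson $p$-norm on the single-set function $\one_{B_{ij}}\ot x$, one gets
\[
\big\|(G_n-G)\one_{(t_i,t_{i+1}]}\one_{A_j}\ot x\big\|_{L^p(\Om;\nu_p(\R_+\ti J;X))}
= c\,\|x\|\,\big(\E|G_n-G|^p\big)^{1/p}\longrightarrow 0,
\]
where $c=\big(\E|\hN(B_{ij})|^p\big)^{1/p}$. Hence $G_n\one_{(t_i,t_{i+1}]}\one_{A_j}\ot x\in\cA$ converges to the building block in the $\nu_p$-norm, and density follows.

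I expect the only genuinely technical point to be this last approximation, and more conceptually the observation that \emph{adaptedness is exactly the hypothesis that unlocks Lemma~\ref{lem:divelementary}}: the coefficient must be approximable by cylindrical functions supported on the strict past $(0,s]\ti J$, so that their representing sets are disjoint from $B=(s,u]\ti A$ and $\one_B DF=0$. Verifying that such past-supported cylindrical functions are dense in $L^p(\Om,\cF_{s})$ and can be chosen uniformly bounded is the main obstacle; once it is secured, the closedness of $\delta$ together with the boundedness of $I$ from Theorem~\ref{thm:UMD} deliver the theorem with no further work.
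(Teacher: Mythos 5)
Your proposal is correct and follows essentially the same route as the paper's proof: both establish $\delta = I$ on elementary adapted processes whose coefficients are cylindrical functions measurable with respect to the strict past (so that Lemma~\ref{lem:divelementary} applies), and then extend using the density of such cylindrical coefficients in $L^p(\Om,\cF_{t_i})$ (the variant of Lemma~\ref{lem:cCdense}), the closedness of $\delta$ as an adjoint, and the boundedness of $I$ from Theorem~\ref{thm:UMD}. The only difference is organizational: the paper performs the closure argument in two stages (first passing to simple adapted processes with $L^\infty$ coefficients, then to all of $L^p_{\mathbb{F}}(\Om;\nu_p(\R_+\ti J;X))$), whereas you merge these into a single density step, with the $\nu_p$-norm convergence of the building blocks written out explicitly.
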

\begin{proof}
Suppose first that $\phi$ is a simple, adapted process of the form (\ref{eqn:simpleBanach}),
with $F_{ijk} \in \mathscr{C}(\Omega,\cF_{t_i})$ for
all $i,j,k$. By Lemma~\ref{lem:divelementary}, $\phi \in \mathrm{D}_p(\delta)$ and
\begin{align*}
\delta(F_{ijk} \one_{(t_i,t_{i+1}]} \one_{A_j} x_{ijk}) & = F_{ijk}
\tilde{N}((t_{i},t_{i+1}]\times A_j) x_{ijk} \\
& =  I(F_{ijk} \one_{(t_i,t_{i+1}]} \one_{A_j} x_{ijk}),
\end{align*}
so by linearity (\ref{eqn:Skorokhod}) holds. Since $\mathscr{C}(\Omega,\cF_{t_i})$
is dense in $L^p(\Om,\cF_{t_i})$ by lemma~\ref{lem:cCdense} and $\delta$ is closed,
we conclude that any simple,
adapted process $\phi$ is in $\mathrm{D}_p(\delta)$ and $\delta(\phi)=I(\phi)$. Finally,
by density of the simple, adapted processes in $L^p_{\mathbb{F}}(\Omega;\nu_p(\R_+\ti J;X))$
 we find that $L^p_{\mathbb{F}}(\Omega;\nu_p(\R_+\ti J;X))\subseteq \mathrm{D}_p(\delta)$
and (\ref{eqn:Skorokhod}) holds.
\end{proof}
\begin{remark}
It is important to emphasize that $\bF$ denotes the natural filtration generated
by $\tilde{N}$. Indeed, in the proof of Theorem~\ref{thm:Skorokhod} we use that
$\mathscr{C}(\Omega,\cF_{s})$ is dense in $L^p(\Om,\cF_{s})$ for any $s\ge 0$.
\end{remark}

\section{A Clark-Ocone formula}\label{sec:CO}

In scalar-valued Poisson stochastic calculus, Clark-Ocone type representation
theorems representing a $\cG$-measurable random variable as the stochastic
integral of an adapted process defined in terms of its Malliavin derivative have been obtained in
various degrees of generality by many authors. We mention \cite{DOP,
LaPe11b, Pri94, Wu} and refer the reader to these works for further
bibliographic references. All these papers are concerned with the real-valued
case. To the best of our knowledge, the Banach space-valued case has not been considered yet in the literature.
Here we shall present an extension of the Clark-Ocone theorem to the
UMD space-valued Poisson stochastic integral of Section \ref{sec:UMD}.

Following the approach of \cite{MvNCO}, our first aim is to construct a
projection $\cP_{\mathbb{F}}$ in the space $L^p(\Om;\nu_p(\R_+ \times {J};X))$
onto the subspace $L_{\mathbb{F}}^p(\Om;\nu_p(\R_+ \times {J};X))$ introduced in Section \ref{sec:UMD}. Formally
this projection is given by $$\cP_{\mathbb{F}} F(t) = \E(F(t)|\F_t).$$
The main issue is to give a rigorous interpretation of this formula in the
present context. For this purpose we shall need a Poisson analogue of the notion
of R-boundedness.

\begin{definition}[$\nu_p$-Boundedness]\label{def:nu-bdd}
Let $1 \leq p < \infty$ and $X,Y$ be Banach spaces. A collection of bounded linear operators $\mathscr{T}
\in \cL(X,Y)$
is said to be {\em $\nu_p$-bounded} if there exists a constant $C > 0$ such that
the estimate
\begin{align*}
   \Big( \E \Big\| \sum_{j} \hN(B_j) T_j x_j  \Big\|_Y^p \Big)^{\frac1p}
   \leq C
   \Big( \E \Big\| \sum_{j} \hN(B_j) x_j  \Big\|_X^p \Big)^{\frac1p}
\end{align*}
holds for every finite collection of pairwise disjoint sets $B_j \in \Y_\mu$, every finite sequence
$x_j\in X$ and every finite sequence $T_j\in \mathscr{T}$.
\end{definition}

If we replace the random variables $\hN(B_j)$ by independent Rademacher variables we
obtain the related notion of {\em $R$-boundedness}. In this case the smallest
admissible constant $C$ is called the {\em $R$-bound}.

\begin{proposition}\label{prop:R-nu}
Let $1 \leq p < \infty$. If a collection of bounded linear operators $\mathscr{T} \in \cL(X,Y)$
is $R$-bounded with $R$-bound $R(\mathscr{T})$, then it is $\nu_p$-bounded as well.
\end{proposition}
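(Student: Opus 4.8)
The plan is to transfer the $R$-bound to a $\nu_p$-bound by realizing the compensated Poisson variables $\hN(B_j)$ as a randomized sequence to which the Rademacher-based hypothesis can be applied conditionally. The two features to overcome are that, unlike Rademacher variables, the $\hN(B_j)$ are neither symmetric nor $\pm1$-valued. However, since the sets $B_j$ are pairwise disjoint, the random variables $N(B_j)$ are independent, so the $\hN(B_j) = N(B_j) - \mu(B_j)$ form an independent, \emph{centered} sequence. Centredness is exactly what allows the standard symmetrization procedure to run, after which the symmetry can be exploited to insert independent signs.

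First I would symmetrize. Let $(\hN'(B_j))_j$ be an independent copy of $(\hN(B_j))_j$ and set $\eta_j := \hN(B_j) - \hN'(B_j)$, so that the $\eta_j$ are independent and symmetric. Using that $\E \hN'(B_j) = 0$, Jensen's inequality applied to the convex function $\|\cdot\|_Y^p$ together with the conditional expectation over the copy gives, with constant one,
$$\E\Big\|\sum_j \hN(B_j) T_j x_j\Big\|_Y^p \le \E\Big\|\sum_j \eta_j T_j x_j\Big\|_Y^p.$$
On the $X$-side I would instead use the triangle inequality and $(a+b)^p \le 2^{p-1}(a^p+b^p)$ to obtain
$$\E\Big\|\sum_j \eta_j x_j\Big\|_X^p \le 2^p\, \E\Big\|\sum_j \hN(B_j) x_j\Big\|_X^p.$$

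Next comes the randomization step, which is the crux of the argument. Since the $\eta_j$ are independent and each is symmetric, the sequences $(\eta_j)_j$ and $(\eps_j \eta_j)_j$ have the same joint law, where $(\eps_j)_j$ is a Rademacher sequence independent of everything else. Hence, conditioning on the $\eta_j$ and writing $T_j(\eta_j x_j) = \eta_j T_j x_j$, the $R$-boundedness hypothesis applied to the vectors $y_j := \eta_j x_j$ yields, for almost every realization of $\eta$,
$$\E_\eps\Big\|\sum_j \eps_j T_j(\eta_j x_j)\Big\|_Y^p \le R(\mathscr{T})^p\, \E_\eps\Big\|\sum_j \eps_j \eta_j x_j\Big\|_X^p.$$
Integrating over $\eta$ and using the equality in law of $(\eps_j\eta_j)_j$ and $(\eta_j)_j$ on both sides converts this into the bound $\E\|\sum_j \eta_j T_j x_j\|_Y^p \le R(\mathscr{T})^p \E\|\sum_j \eta_j x_j\|_X^p$.

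Finally I would chain the three estimates, giving
$$\Big(\E\Big\|\sum_j \hN(B_j) T_j x_j\Big\|_Y^p\Big)^{\frac1p} \le 2\,R(\mathscr{T})\Big(\E\Big\|\sum_j \hN(B_j) x_j\Big\|_X^p\Big)^{\frac1p},$$
so that $\mathscr{T}$ is $\nu_p$-bounded with constant at most $2R(\mathscr{T})$. I expect the only genuinely delicate point to be this randomization: one must check that inserting the independent signs $\eps_j$ is a true equality in law for the \emph{whole} sequence (relying on independence together with the symmetry of each $\eta_j$), and that the $R$-boundedness hypothesis, being stated for arbitrary deterministic vectors, may legitimately be invoked for the random vectors $\eta_j x_j$ after conditioning on $\eta$. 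Everything else reduces to Jensen's inequality and the triangle inequality.
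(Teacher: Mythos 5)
Your proof is correct and follows essentially the same route as the paper: symmetrize, apply the $R$-boundedness hypothesis conditionally on the Poisson variables via Fubini (legitimate since the $\hN(B_j)$ are independent and centered by disjointness of the $B_j$), then desymmetrize. The only difference is cosmetic: the paper invokes the symmetrization lemma of Ledoux--Talagrand as a black box, whereas you prove the needed one-sided symmetrization inequalities by hand (independent copy, Jensen, triangle inequality), which even yields the slightly better constant $2R(\mathscr{T})$.
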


\begin{proof}
Let $(\eps_j)_{j\geq 1}$ be a Rademacher sequence. By symmetrization \cite[Lemma 6.3]{LeT91},
\begin{align*}
 \E \Big\| \sum_{j} \hN(B_j) T_j x_j  \Big\|_Y^p
 & \eqsim   \E\E_\eps \Big\| \sum_{j} \eps_j \hN(B_j)  T_j x_j \Big\|_Y^p
 \\& \leq R(\mathscr{T})^p   \E\E_\eps  \Big\| \sum_{j} \eps_j \hN(B_j) x_j
\Big\|_X^p
 \\&  \eqsim R(\mathscr{T})^p
 \E \Big\| \sum_{j} \hN(B_j) x_j  \Big\|_X^p.
\end{align*}
\end{proof}

\begin{example}[Conditional expectations]\label{ex:condExp}
Let $X$ be a UMD Banach space and let $(\F_t)_{t \geq 0}$ be a filtration on a
probability space $(\Om, \cF, \P)$. For all $1 \leq p < \infty$ and all $1 < q <
\infty$ the collection of conditional expectation operators
\begin{align*}
 \{ \E(\cdot | \F_t) : t \geq 0 \}
\end{align*}
is $\nu_p$-bounded on $L^q(\Om; X)$.
Indeed, by Bourgain's vector-valued Stein inequality
\cite{Bou} (see also \cite{CPSW}), this collection is $R$-bounded on $L^q(\Om; X)$.
\end{example}

We can now rigorously define the projection
$\cP_{\mathbb{F}}$. Let $\pi = \{t_1, \ldots, t_{N+1}\}$
with $0 = t_1 \leq t_2 \ldots \leq t_{N+1} <\infty$ be a partition.
Let $F \in L^p(\Om;\nu_p(\R_+ \times {J};X))$ be a simple process of the form
\begin{align*}
  F = \one_{(a, b]} \otw f \otw \one_A \otw x,
\end{align*}
where $0 \leq a < b <\infty$, $f \in L^\infty(\Om)$,
$A\in \Y_\mu$,
and $x \in
X$. We define $\cP_{\mathbb{F}}^\pi F \in  L^p_{\mathbb{F}}(\Om;\nu_p(\R_+
\times {J};X))$ by
\begin{align*}
 \cP_{\mathbb{F}}^\pi F = \sum_{n=1}^N
 			\one_{(a, b] \cap (t_{n}, t_{n+1}]} \otw
\E(f|\cF_{t_{n}})
			  \otw \one_A \otw x
\end{align*}
and extend this definition by linearity.

\begin{proposition}\label{prop:partitions}
Let $X$ be a UMD space. For each partition $\pi$ the mapping
$\cP_{\mathbb{F}}^\pi$ has a unique extension to a bounded projection in the
space $L^p(\Om;\nu_p(\R_+ \times {J};X))$. These projections are uniformly
bounded and
\begin{align*}
 \cP_{\mathbb{F}} F
   =  \lim_{\pi} \cP_{\mathbb{F}}^\pi F
\end{align*}
defines a bounded projection in $L^p(\Om;\nu_p(\R_+ \times {J};X))$ onto
$L^p_{\mathbb{F}}(\Om;\nu_p(\R_+ \times {J};X))$.
\end{proposition}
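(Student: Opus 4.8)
The plan is to define $\cP_{\mathbb{F}}^\pi$ first on simple processes, to establish the norm estimate there with a bound independent of the partition $\pi$, and then to extend by density to all of $L^p(\Om;\nu_p(\R_+\ti J;X))$; the limiting projection $\cP_{\mathbb{F}}$ will then be produced by a net argument over refining partitions. Throughout I would test against simple processes
\[
F = \sum_j \one_{B_j}\ot g_j, \qquad B_j = (a_j,b_j]\ti A_j, \quad g_j \in L^\infty(\Om)\ot X,
\]
with the $B_j$ pairwise disjoint, since these are dense in $L^p(\Om;\nu_p(\R_+\ti J;X))$.

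The heart of the matter is the uniform boundedness. Because the Poisson $p$-norm is unchanged under subdivision of the sets $B_j$ (as $\hN$ is additive on disjoint sets), I would first refine the representation of $F$ so that each time interval $(a_j,b_j]$ lies inside a single cell $(t_n,t_{n+1}]$ of $\pi$. With this normalization $\cP_{\mathbb{F}}^\pi$ acts coordinatewise,
\[
\cP_{\mathbb{F}}^\pi F = \sum_j \one_{B_j}\ot T_j g_j, \qquad T_j := \E(\,\cdot\,|\cF_{t_{n(j)}}),
\]
where $t_{n(j)}$ is the left endpoint of the cell containing $B_j$ in time, so that every $T_j$ belongs to the single family $\{\E(\,\cdot\,|\cF_t): t\geq 0\}$. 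By Fubini's theorem the norm of a simple process can be written as
\[
\|F\|_{L^p(\Om;\nu_p(\R_+\ti J;X))}^p = \E\Big\|\sum_j \hN(B_j) g_j\Big\|_{L^p(\Om;X)}^p,
\]
the outer expectation being over the Poisson measure appearing in the definition of the $\nu_p$-norm, and the same identity holds for $\cP_{\mathbb{F}}^\pi F$ with each $g_j$ replaced by $T_j g_j$. The bound
\[
\|\cP_{\mathbb{F}}^\pi F\|_{L^p(\Om;\nu_p(\R_+\ti J;X))} \leq C\,\|F\|_{L^p(\Om;\nu_p(\R_+\ti J;X))},
\]
with $C$ independent of $\pi$, is then exactly the assertion that the conditional expectation family is $\nu_p$-bounded on $L^p(\Om;X)$ (Definition~\ref{def:nu-bdd} applied with inner exponent $q=p$), which is Example~\ref{ex:condExp} and ultimately rests on Bourgain's vector-valued Stein inequality, hence on the UMD property of $X$. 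This yields a unique extension of each $\cP_{\mathbb{F}}^\pi$ to a bounded operator, uniformly bounded in $\pi$.

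Granting the uniform bound, I would conclude as follows. Each $\cP_{\mathbb{F}}^\pi$ is idempotent on simple processes by the tower property $\E(\,\cdot\,|\cF_{t_n})\,\E(\,\cdot\,|\cF_{t_n}) = \E(\,\cdot\,|\cF_{t_n})$, and its range consists of $\pi$-adapted processes, which lie in the closed subspace $L^p_{\mathbb{F}}(\Om;\nu_p(\R_+\ti J;X))$; hence each $\cP_{\mathbb{F}}^\pi$ is a bounded projection. For the net limit it suffices, by uniform boundedness and density, to prove convergence of $\cP_{\mathbb{F}}^\pi F$ for simple $F$, and there it reduces to the martingale convergence statement $\E(g_j|\cF_{t_n}) \to \E(g_j|\cF_{t-})$ as the mesh of $\pi$ tends to $0$ and $t_n\uparrow t$. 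Here I would invoke that for the natural filtration of a Poisson random measure one has $\cF_{t-} = \cF_t$ up to null sets, so the limit is the process $t\mapsto \E(F(t)|\cF_t)$ of the definition of $\cP_{\mathbb{F}}$. The operator $\cP_{\mathbb{F}}$ is then bounded with the same constant $C$, idempotent as a strong limit of idempotents on a dense set, maps into the closed subspace $L^p_{\mathbb{F}}$, and restricts to the identity there (for adapted $F$ one has $\E(F(t)|\cF_{t_n})\to F(t)$); it is therefore a bounded projection onto $L^p_{\mathbb{F}}(\Om;\nu_p(\R_+\ti J;X))$.

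The main obstacle is the uniform boundedness: everything hinges on recognizing $\cP_{\mathbb{F}}^\pi$ as a coordinatewise application of operators drawn from the one family $\{\E(\,\cdot\,|\cF_t)\}$, so that the partition-independent estimate is delivered by $\nu_p$-boundedness rather than by a bound that degenerates under refinement. A secondary point requiring care is the identification of the limit, where left-continuity of the Poisson filtration, $\cF_{t-}=\cF_t$, is needed to match the net limit with the formal expression $\cP_{\mathbb{F}} F(t) = \E(F(t)|\cF_t)$.
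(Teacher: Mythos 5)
Your treatment of the uniform boundedness is exactly the paper's argument and is correct: you refine the representation of $F$ so that $\cP_{\mathbb{F}}^\pi$ acts coordinatewise by operators drawn from the single family $\{\E(\cdot\,|\,\cF_t):t\geq 0\}$, pass to $\nu_p(\R_+\times J;L^p(\Om;X))$ via the Fubini identification, and invoke the $\nu_p$-boundedness of the conditional expectation family (Example~\ref{ex:condExp}, resting on Bourgain's vector-valued Stein inequality and hence on the UMD property). The extension by density, the idempotence via the tower property, and the fact that the ranges lie in $L^p_{\mathbb{F}}(\Om;\nu_p(\R_+\times J;X))$ are likewise fine.

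The gap is in the existence of the limit $\lim_\pi \cP_{\mathbb{F}}^\pi F$. Your reduction to simple $F$ by uniform boundedness and density is valid, but the claim that for simple $F$ the convergence ``reduces to'' the pointwise martingale convergence $\E(g_j|\cF_{t_n})\to\E(g_j|\cF_{t-})$ is not a proof: the convergence must take place in $L^p(\Om;\nu_p(\R_+\times J;X))$, and the $\nu_p$-norm is not a Bochner norm in $(t,j)$ --- it is defined through an auxiliary compensated Poisson integral --- so pointwise-in-$t$ convergence of the conditional expectations does not transfer to it without an additional mechanism. To repair your route one would have to dominate $\|\cP_{\mathbb{F}}^{\pi''}F-\cP_{\mathbb{F}}^{\pi}F\|_{L^p(\Om;\nu_p(\R_+\times J;X))}$ by genuine Bochner norms, for instance through the martingale type embedding \eqref{eq:incl-mt} (available since every UMD space has nontrivial martingale type), and then run a dominated convergence argument; even this requires care, because the limit is along the net of all partitions ordered by refinement, and dominated convergence fails for nets in general (one must exploit that the mesh can only decrease under refinement, or some monotonicity in $\pi$). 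The left-continuity $\cF_{t-}=\cF_t$ that you lean on is also asserted rather than proved (it does hold, since the intensity $dt\times\nu$ is non-atomic in time). The paper's proof avoids all of these issues by an order-theoretic argument: it observes that $\cP_{\mathbb{F}}^{\pi}\circ\cP_{\mathbb{F}}^{\pi'}=\cP_{\mathbb{F}}^{\pi'}$ whenever $\pi'\subseteq\pi$, so that $(\cP_{\mathbb{F}}^\pi)_\pi$ is an upward directed, uniformly bounded net of commuting projections; its strong limit then exists (a step which is cleanest using that the space $L^p(\Om;\nu_p(\R_+\times J;X))$ is reflexive here, $\nu_p$ being a closed subspace of $L^p(\Om;X)$ with $X$ UMD) and is automatically the projection onto the closure of the union of the ranges, which is $L^p_{\mathbb{F}}(\Om;\nu_p(\R_+\times J;X))$. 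No pointwise identification of the limit, no martingale convergence, and no filtration regularity enter at all.
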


The above limit is taken along the net of all partitions $\pi$ of $\R_+$, which are partially ordered by refinement.

\begin{proof}
By Example \ref{ex:condExp} the collection of conditional expectation operators
$\E(\cdot|\cF_t)$ is $\nu_p$-bounded on $L^p(\Om; X)$.
Hence for a simple process of the form
\begin{align*}
F = \sum_{j=1}^J\sum_{k=1}^K \sum_{l=1}^L
 f_{jkl} \otw  \one_{(s_{j}, s_{j+1}]} \otw\one_{A_l} \otw x_{jkl},
\end{align*}
with the sets $A_l\in\Y_\mu$ pairwise disjoint,
we have
\begin{align*}
& \| \cP_{\mathbb{F}}^\pi F \|_{\nu_p(\R_+ \times J; L^p(\Om;X))}
\\&   = \| \sum_{n=1}^N \sum_{j,k,l}
\one_{(s_{j}, s_{j+1}] \cap (t_{n}, t_{n+1}]}
		\otw \E(f_{jkl} | \cF_{t_{n}}) \otw  \one_{A_l} \otw x_{jkl}
			   \|_{\nu_p(\R_+ \times J; L^p(\Om;X))}
\\&   \lesssim_{p,X} \| \sum_{n=1}^N \sum_{j,k,l}
\one_{(s_{j}, s_{j+1}] \cap (t_{n}, t_{n+1}]}
		\otw   f_{jkl} \otw  \one_{A_l} \otw x_{jkl}
			   \|_{\nu_p(\R_+ \times J; L^p(\Om;X))}
\\&   = \|  \sum_{j,k,l}
\one_{(s_{j}, s_{j+1}]}
		\otw   f_{jkl} \otw  \one_{A_l} \otw x_{jkl}
			   \|_{\nu_p(\R_+ \times J; L^p(\Om;X))}
\\&   = \| F \|_{\nu_p(\R_+ \times J; L^p(\Om;X))}.
\end{align*}
By Fubini's Theorem we have an isometric isomorphism
\begin{align*}
 \nu_p(\R_+ \times J; L^p(\Om;X)) \eqsim
 L^p(\Om; \nu_p(\R_+ \times J; X)).
\end{align*}
As a consequence, the operator $\cP_{\mathbb{F}}^\pi$ has a unique extension to
a bounded operator on $L^p(\Om; \nu_p(\R_+ \times J; X))$, with norm bounded
from above by a constant depending only on $p$ and $X$. Obviously, this operator
is a projection. Moreover, if $\pi' \subseteq \pi$, then
\begin{align*}
 \cP_{\mathbb{F}}^{\pi} \circ \cP_{\mathbb{F}}^{\pi'}
   =  \cP_{\mathbb{F}}^{\pi'}.
\end{align*}
This implies that the net $(\cP_\pi^{\mathbb{F}})_\pi $ is upward directed.
Since it is also uniformly bounded, the strong operator limit
$$\cP_{\mathbb{F}}:= \lim_\pi \cP_{\mathbb{F}}^{\pi} = \bigvee_\pi
\cP^\pi_{\mathbb{F}}$$ exists in $L^p(\Om; \nu_p(\R_+ \times J; X))$ and
defines a projection onto
$$\overline{\bigcup_{\pi}\cP_{\bF}^{\pi}L^p(\Om;\nu_p(\R_+\ti J;X))}
= L^p_{\mathbb{F}}(\Om;\nu_p(\R_+ \times {J};X)).$$
\end{proof}

The following lemma will be
useful in the proof of Theorem \ref{thm:ClarkOcone}.
 \begin{lemma} \label{lem:itoduality}
let $X$ be a UMD space and let
$1 < p,q < \infty$ satisfy
$\frac1p+\frac1q=1$. For all random variables $\Phi \in
L^p_{{{\mathbb{F}}}}(\Om;\nu_p(\R_+\times{J}; X))$ and $\Psi \in
L^q_{{{\mathbb{F}}}}(\Om;\nu_q(\R_+\times{J}; X^*))$ we have
 \begin{align*}
 \E\ip{I_p(\Phi),I_q(\Psi)}  = \E\lb \Phi,\Psi \rb.
 \end{align*}
 \end{lemma}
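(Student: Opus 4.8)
The plan is to establish the identity first for simple adapted processes by a direct second-moment computation, and then to extend it to arbitrary $\Phi$ and $\Psi$ by density, the boundedness of the stochastic integral being supplied by Theorem~\ref{thm:UMD}. First I would invoke the very definition of the spaces involved: $L^p_{\mathbb{F}}(\Om;\nu_p(\R_+\ti J;X))$ and $L^q_{\mathbb{F}}(\Om;\nu_q(\R_+\ti J;X^*))$ are the closures of the simple adapted processes, so it suffices to prove the formula for such $\Phi,\Psi$ and to check that both sides define continuous bilinear forms. Given simple adapted $\Phi=\sum_{i,j,k}F_{ijk}\one_{(t_i,t_{i+1}]}\one_{A_j}x_{ijk}$ and $\Psi=\sum_{l,m,n}G_{lmn}\one_{(t_l,t_{l+1}]}\one_{A_m}x^*_{lmn}$, I would refine the data so that both are subordinate to a common partition $\{t_i\}$ and a common family of disjoint sets $\{A_j\}$, with $F_{ijk},G_{ijn}\in L^\infty(\Om,\cF_{t_i})$.

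The heart of the matter is the orthogonality relation already exploited in the proof of Theorem~\ref{thm:cotypesBanach}. Writing $\tilde N_{ij}:=\tilde N((t_i,t_{i+1}]\ti A_j)$, adaptedness together with the independence and the mean-zero property of the increments over disjoint blocks, combined with $\E\tilde N_{ij}^2=\mu((t_i,t_{i+1}]\ti A_j)$, yields
$$\E\big(F_{ijk}G_{lmn}\tilde N_{ij}\tilde N_{lm}\big)=\one_{\{i=l,\,j=m\}}\,\mu\big((t_i,t_{i+1}]\ti A_j\big)\,\E\big(F_{ijk}G_{ijn}\big),$$
since all cross terms vanish. Multiplying by $\lb x_{ijk},x^*_{lmn}\rb$ and summing gives
$$\E\ip{I(\Phi),I(\Psi)}=\sum_{i,j,k,n}\mu\big((t_i,t_{i+1}]\ti A_j\big)\,\E\big(F_{ijk}G_{ijn}\big)\,\lb x_{ijk},x^*_{ijn}\rb.$$
On the other hand, on the block $(t_i,t_{i+1}]\ti A_j$ one has $\Phi=\sum_k F_{ijk}x_{ijk}$ and $\Psi=\sum_n G_{ijn}x^*_{ijn}$, so $\int_{\R_+\ti J}\lb\Phi,\Psi\rb\dd\mu=\sum_{i,j}\mu((t_i,t_{i+1}]\ti A_j)\,\lb\sum_k F_{ijk}x_{ijk},\sum_n G_{ijn}x^*_{ijn}\rb$, whose expectation is the identical sum. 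Hence $\E\ip{I(\Phi),I(\Psi)}=\E\lb\Phi,\Psi\rb$ for simple adapted processes.

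Finally I would pass to the limit. The left-hand side extends to a bounded bilinear form on $L^p_{\mathbb{F}}\ti L^q_{\mathbb{F}}$: by Hölder's inequality and Theorem~\ref{thm:UMD},
$$\big|\E\ip{I_p(\Phi),I_q(\Psi)}\big|\le \|I_p(\Phi)\|_{L^p(\Om;X)}\,\|I_q(\Psi)\|_{L^q(\Om;X^*)}\lesssim_{p,X}\|\Phi\|_{L^p(\Om;\nu_p)}\,\|\Psi\|_{L^q(\Om;\nu_q)}.$$
The right-hand side agrees with the left on the dense subspace of simple adapted processes, so it is bounded there by the same product of norms and therefore extends uniquely to a bounded bilinear form on $L^p_{\mathbb{F}}\ti L^q_{\mathbb{F}}$. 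Two bounded bilinear forms that coincide on a dense subspace coincide everywhere, which proves the lemma.

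The step I expect to be the main obstacle is making rigorous sense of the right-hand side $\E\lb\Phi,\Psi\rb$ for non-simple elements and ensuring it is continuous in both arguments. Rather than attempting a direct Hölder estimate between $\nu_p(E;X)$ and $\nu_q(E;X^*)$ — whose duality is not transparent for a general UMD space — I would obtain the continuity of the right-hand side \emph{for free} from its coincidence with the left-hand side on simple processes. The underlying elements are genuine $X$-valued functions because $X$ is UMD and hence of finite martingale cotype, so by \eqref{eq:incl-mc} the spaces $\nu_p$ and $\nu_q$ embed into honest Bochner spaces; this legitimizes the pointwise pairing and identifies the continuous extension with the integral $\E\int_{\R_+\ti J}\lb\Phi,\Psi\rb\dd\mu$.
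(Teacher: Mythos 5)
Your proposal takes essentially the same route as the paper: the paper's own proof consists of exactly your two steps, namely a direct computation for simple adapted processes (your orthogonality relation is the same one displayed in the proof of Theorem~\ref{thm:cotypesBanach}) followed by extension via density, with the boundedness of both stochastic integrals supplied by Theorem~\ref{thm:UMD} applied to $X$ and to the UMD space $X^*$. Your additional care in interpreting $\E\lb \Phi,\Psi\rb$ for non-simple elements--defining it as the continuous extension of the pairing from simple processes rather than attempting a direct H\"older estimate between $\nu_p(\R_+\times J;X)$ and $\nu_q(\R_+\times J;X^*)$--is more detail than the paper records and is the right way to make its one-line approximation step rigorous.
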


  \begin{proof}
When $\Phi$ and $\Psi$ are simple adapted processes the result follows
by direct computation. The general case follows by approximation.
  \end{proof}

We can now prove a Clark-Ocone representation formula.

\begin{theorem}[Clark-Ocone representation]\label{thm:ClarkOcone}
Let $X$ be a UMD space and $1 < p < \infty$. If $F \in
\dD^{1,p}(\Om;X)$ is $\cG$-measurable, then
 \begin{align*}
 F = \E(F) + I (\cP_{\mathbb{F}}(D F)).
 \end{align*}
Moreover, $\cP_{\mathbb{F}}(D F)$ is the unique $Y \in
L_{\mathbb{F}}^p(\Om;\nu_p(\R_+\times J; X))$ satisfying $F =
\E(F) + I(Y)$.
\end{theorem}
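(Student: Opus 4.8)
The plan is to prove both assertions by a duality argument resting on the three pillars already in place: the It\^o isomorphism of Theorem~\ref{thm:UMD}, the Skorohod--It\^o identification of Theorem~\ref{thm:Skorokhod}, and the It\^o duality of Lemma~\ref{lem:itoduality}. I would dispose of uniqueness first, as it is the easy half: if $Y,Y'\in L^p_{\mathbb F}(\Om;\nu_p(\R_+\times J;X))$ both satisfy $F=\E(F)+I(Y)=\E(F)+I(Y')$, then $I(Y-Y')=0$; since Theorem~\ref{thm:UMD} shows $I$ is bounded below on $L^p_{\mathbb F}(\Om;\nu_p(\R_+\times J;X))$, it is injective there, so $Y=Y'$. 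It then remains to show that $Y:=\cP_{\mathbb F}(DF)$ actually represents $F$, which (being adapted) also yields the \emph{moreover} statement.

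For existence I would reduce the identity $F-\E(F)=I(\cP_{\mathbb F}(DF))$ to a testing statement. Both sides lie in the mean-zero part of $L^p(\Om,\cG;X)$, since $I$ always produces mean-zero integrals and $\E\bigl(F-\E(F)\bigr)=0$. As $X$ is UMD and hence reflexive, it suffices to test the difference $Z:=F-\E(F)-I(\cP_{\mathbb F}(DF))$ against a norming family of functionals, and I would take for these the stochastic integrals $G=I(\Psi)$ with $\Psi\in L^q_{\mathbb F}(\Om;\nu_q(\R_+\times J;X^*))$ adapted, where $\frac1p+\frac1q=1$. The central computation is the chain
\begin{align*}
\E\langle I(\cP_{\mathbb F}(DF)),I(\Psi)\rangle
&= \E\langle \cP_{\mathbb F}(DF),\Psi\rangle
= \E\langle DF,\Psi\rangle
= \E\langle F,\delta\Psi\rangle \\
&= \E\langle F,I(\Psi)\rangle
= \E\langle F-\E(F),I(\Psi)\rangle .
\end{align*}
Here the first equality is Lemma~\ref{lem:itoduality}; the second uses that $\cP_{\mathbb F}$ is a projection onto the adapted processes assembled from the conditional expectations $\E(\cdot\,|\,\F_t)$ of Example~\ref{ex:condExp}, which are self-adjoint for the $L^p$--$L^q$ pairing, so that $\cP_{\mathbb F}^*$ fixes the adapted process $\Psi$; the third is the definition of $\delta=\delta_q^{X^*}$ as the adjoint of $D=D_p^X$; the fourth is Theorem~\ref{thm:Skorokhod}, giving $\delta\Psi=I(\Psi)$ for adapted $\Psi$; and the last uses $\E I(\Psi)=0$. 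Hence $\E\langle Z,I(\Psi)\rangle=0$ for every adapted $\Psi$.

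The remaining step, concluding that $Z=0$, is the one I expect to be the main obstacle: one must know that the family $\{I(\Psi):\Psi\in L^q_{\mathbb F}(\Om;\nu_q(\R_+\times J;X^*))\}$ is norming for the mean-zero $\cG$-measurable elements of $L^p(\Om;X)$. I would establish this by reducing to the scalar predictable representation property of the Poisson random measure: every mean-zero $\cG$-measurable $g\in L^q(\Om)$ is a stochastic integral $g=I(\psi)$ of a scalar adapted $\psi$, so that $g\otimes x^*=I(\psi\otimes x^*)$ belongs to the family above for each $x^*\in X^*$. Testing $Z$ against these turns $\E\langle Z,I(\Psi)\rangle=0$ into $\E\bigl(g\,\langle Z,x^*\rangle\bigr)=0$; since such $g$ are dense in the mean-zero part of $L^q(\Om,\cG)$ and $\langle Z,x^*\rangle$ is a mean-zero $\cG$-measurable scalar, this forces $\langle Z,x^*\rangle=0$ almost surely for every $x^*\in X^*$, whence $Z=0$. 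The bulk of the genuine work is thus in verifying the self-adjointness of $\cP_{\mathbb F}$ used in the second equality and in invoking scalar predictable representation to secure the norming property; the rest is a bookkeeping assembly of results already proved.
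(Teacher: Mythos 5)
Your proposal is correct, and it rests on the same pillars as the paper's own proof --- the injectivity of $I$ from Theorem~\ref{thm:UMD}, the identification $\delta=I$ on adapted processes from Theorem~\ref{thm:Skorokhod}, the It\^o duality of Lemma~\ref{lem:itoduality}, the self-adjointness of $\cP_{\mathbb{F}}$, and the scalar Clark--Ocone theorem of \cite{LaPe11b} combined with Corollary~\ref{cor:Hilbert} --- but it assembles them in a genuinely different, ``dual'' architecture. The paper first proves that $F$ itself admits a predictable representation $F=I(\phi)$: it reduces by density and linearity to $F=G\ot x$ with $G\in L^2(\Om,\cG)\cap L^p(\Om,\cG)$, applies the scalar $L^2$ Clark--Ocone theorem, and upgrades the integrand to $L^p_{\bF}(\Om;\nu_p(\R_+\ti J;\R))$ via Corollary~\ref{cor:Hilbert}; only then does it run the duality chain, testing against \emph{arbitrary} (not necessarily adapted) $\tilde\phi\in L^q(\Om;\nu_q(\R_+\ti J;X^*))$, to identify $\phi=\cP_{\mathbb{F}}(DF)$ as an element of $L^p(\Om;\nu_p(\R_+\ti J;X))$. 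You never represent the vector-valued $F$ at all: you run essentially the same chain of equalities against adapted test integrals $I(\Psi)$, conclude that $Z=F-\E(F)-I(\cP_{\mathbb{F}}(DF))$ annihilates this family, and push the scalar predictable representation to the dual side, where it produces the norming functionals $g\ot x^*=I(\psi\ot x^*)$. What each buys: your route avoids the tensor-reduction/density step in $\dD^{1,p}(\Om;X)$ and performs the final identification in the familiar space $L^p(\Om;X)$, whereas the paper's identification takes place in $L^p(\Om;\nu_p(\R_+\ti J;X))$ and implicitly requires $L^q(\Om;\nu_q(\R_+\ti J;X^*))$ to separate its points; conversely, the paper's route yields as a by-product that every mean-zero $F\in\dD^{1,p}(\Om;X)$ lies in the range of $I$ on $L^p_{\bF}$, and that \emph{any} adapted representation of $F$ has integrand $\cP_{\mathbb{F}}(DF)$.

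Two points deserve the same care in your version as in the paper's (and are at the same level of rigor there): (i) the passage from ``$I(\psi)=g\in L^q(\Om,\cG)$'' to ``$\psi\in L^q_{\bF}(\Om;\nu_q(\R_+\ti J;\R))$'' uses the two-sided scalar estimate of Corollary~\ref{cor:Hilbert}, which is stated for simple adapted processes and so requires an approximation argument; and (ii) the $L^2$ theorem only represents $g\in L^2\cap L^q$ directly, so --- as your density remark implicitly acknowledges --- you should test against this dense subfamily of the mean-zero part of $L^q(\Om,\cG)$ and use $\E\ip{Z,x^*}=0$ to dispose of constants before concluding $\ip{Z,x^*}=0$ almost surely for each $x^*$, and then $Z=0$ by strong measurability.
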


\begin{proof}
We may assume that $\E(F) = 0$ as $D(\E(F))=0$. Suppose first that there exists
$\phi \in L_{\mathbb{F}}^p(\Om;\nu_p(\R_+ \times J;X))$
such that
\begin{align}\label{eq:representation}
F = I(\phi).
\end{align}
Let $\frac1p+\frac1q= 1,$ and let $\tilde \phi\in L^q(\Om;\nu_q(\R_+ \times J;X^*))$
be arbitrary. Arguing as in \cite{MvNCO}, we obtain using Theorem \ref{thm:Skorokhod},
and Lemma \ref{lem:itoduality},
\begin{align*}
    \E\ip{ \cP_{{{\mathbb{F}}}}(DF),\tilde \phi}
& =  \E \ip{ DF,\cP_{{{\mathbb{F}}}} \tilde \phi}
 =  \E\ip{ F,\d(\cP_{{{\mathbb{F}}}} \tilde \phi)}
\\ & =  \E\ip{I(\phi),I(\cP_{{{\mathbb{F}}}} \tilde \phi)}
\\ & =  \E \ip{ \phi,\cP_{{{\mathbb{F}}}} \tilde \phi}
 =  \E\ip{ \cP_{{{\mathbb{F}}}}\phi, \tilde \phi}
 =  \E\ip{\phi, \tilde \phi}.
\end{align*}
Since this holds for all $\tilde \phi \in
 L^q(\Om;\nu_q(\R_+ \times J;X^*)),$ it follows that $\phi =
\cP_{{{\mathbb{F}}}}(DF)$ and therefore
\begin{align}\label{eq:FIG}
 F = I(\phi) = I(\cP_{{{\mathbb{F}}}}(DF)).
\end{align}
Next let $F \in\dD^{1,p}(\Om;X)$ be arbitrary with $\E(F) = 0$.
By density and linearity it suffices to prove (\ref{eq:FIG}) for a function $F=G\ot x$,
where $G \in L^2(\Om,\cG)\cap L^p(\Om,\cG)$. We need to show that (\ref{eq:representation}) holds. In view of the identity  $I(\phi\ot x) = I(\phi)\ot x $
it suffices to show that $G=I(\phi)$ for some $\phi \in L^p_{\bF}(\Om;\nu_p(\R_+\ti J;\R))$.
By the scalar Clark-Ocone formula of \cite[Theorem 2.1]{LaPe11b}
every random variable in $L^2(\Om,\cG)$ can be
represented as the stochastic integral of a predictable process
$\phi \in L^2(\Om;L^2(\R_+\ti J)) = L^2(\Om;\nu_2(\R_+\ti J;\R))$.
Since $G\in L^p(\Om,\cG)$, Corollary~\ref{cor:Hilbert} implies that $\phi \in L^p_{\bF}(\Om;\nu_p(\R_+\ti J;\R))$.

Finally, the uniqueness of $\cP_{{{\mathbb{F}}}}(DF)$
follows from the injectivity of $I$ as a bounded linear operator
from $L_{\mathbb{F}}^p(\Om;\nu_p(\R_+ \times J;X))$ to
$L^p(\Om;X)$.
\end{proof}

\bibliographystyle{plain}
\bibliography{poisson}
 \end{document}